\documentclass[12pt, oneside]{scrartcl}
\usepackage[english]{babel}
\usepackage[T1]{fontenc}
\usepackage{makecell}
\usepackage{booktabs}
\usepackage{amssymb}
\usepackage{amsmath}
\usepackage{hhline}
\usepackage{longtable}
\usepackage{amscd}
\usepackage{theorem}
\usepackage{array}
\usepackage{delarray}
\usepackage{multicol}
\usepackage{makeidx}
\usepackage{multirow}
\usepackage{hyperref}

\usepackage{float}

\usepackage{tikz}
\usepackage{calc}
\usetikzlibrary{angles,quotes}
\usetikzlibrary{calc}
\usetikzlibrary{patterns}
\usetikzlibrary{graphs}
\usetikzlibrary{graphs.standard}
\usetikzlibrary{decorations.markings,shapes.geometric,positioning}
\usetikzlibrary{arrows.meta}

\tikzset{
  vtx/.style={circle,draw,fill=white,inner sep=1.6pt,minimum size=13pt},
  blu/.style={blue!80!black,font=\small},
  redt/.style={red!80!black,font=\small}
}
\newtheorem{theorem}{Theorem}

\newtheorem{corollary}{Corollary}
\newtheorem{conjecture}{Conjecture}

\newtheorem{lemma}{Lemma}
\newtheorem{remark}{Remark}[theorem]
\newtheorem{claim}{Claim}[theorem]
\newtheorem{assertion}{Assertion}[theorem]

\newtheorem{problem}{Problem}
\newtheorem{proposition}{Proposition}

\newenvironment{proof}[1][Proof]{\textbf{#1.} }{\ \rule{0.5em}{0.5em}}

\usepackage{tikz}
\author{Ayman El Zein\footnote{Corresponding author. Computer Science Department, University of Sciences and Arts in Lebanon, Beirut, Lebanon; KALMA Laboratory, Faculty of Sciences, Lebanese University, Beirut, Lebanon. email: a.elzein@usal.edu.lb} \footnotemark[3] \and Maidoun Mortada\footnote{KALMA Laboratory, Faculty of Sciences, Lebanese University, Baalbek, Lebanon; Basic and Applied Sciences Research, Al Maaref University, Beirut, Lebanon; and Graph Theory and Operation Research, Department of Mathematics and Physics, Lebanese International University (LIU), Beirut, Lebanon. email: maydoun.mortada@liu.edu.lb} \footnote{The two authors contributed equally and are considered co-first authors.}}

\begin{document} 

\title{Impact of local girth on the $S$-packing coloring of $k$-saturated subcubic graphs}
\maketitle

\begin{abstract}
For a non-decreasing sequence $S=(s_1,s_2,\dots,s_k)$, an $S$-packing coloring of a graph $G$ is a vertex coloring using the colors $s_1,s_2,\dots,s_k$ such that any two vertices assigned the same color $s_i$ are at distance greater than $s_i$. 
A subcubic graph is said to be $k$-saturated, for $0\le k\le3$, if every vertex of degree~3 is adjacent to at most $k$ vertices of degree~3. 
The \emph{local girth} of a vertex is the length of the smallest cycle containing it. 
Bre\v{s}ar, Kuenzel, and Rall [\textit{Discrete Math.} 348(8) (2025),~114477] proved that every claw-free cubic graph is $(1,1,2,2)$-packing colorable, confirming the conjecture for this family. 
Equivalently, a claw-free cubic graph is one in which each $3$-vertex has local girth~3. 
Motivated by this observation and by recent progress on $S$-packing colorings of $k$-saturated subcubic graphs, we study the influence of local girth on their $S$-packing colorability. 
We establish a series of results describing how the parameters of saturation and local girth jointly determine the admissible $S$-packing sequences. 
Sharpness is verified through explicit constructions, and several open problems are posed to delineate the remaining cases.
\end{abstract}
\noindent\textbf{Mathematics Subject Classification:} 05C15\\
\textbf{Keywords}: graph, coloring, $S$-packing coloring, subcubic graph, local girth.

\section{Introduction}

All graphs considered in this paper are finite, simple, and undirected. For a graph $G$, we denote by $V(G)$ and $E(G)$ its vertex and edge sets, respectively. For a vertex $x\in V(G)$, we denote by $N(x)$ the set of the neighbors of $x$ and by $N_2(x)$ the set of vertices at distance exactly two from $x$. For a subset $X \subseteq V(G)$, we write $N[X] = X \cup N(X)$ for the \emph{closed neighborhood} of $X$. The degree of a vertex $x$ is denoted by $d_G(x)$ (or simply $d(x)$), and we write $\Delta(G)$ and $\delta(G)$ for the maximum and minimum degrees of $G$. A vertex $v$ with $d(v)=k$ is called a \emph{$k$-vertex}. A graph $G$ is said to be \emph{subcubic} if $\Delta(G)\le 3$ and \emph{cubic} if $d_G(v)=3$ for every vertex $v$. A subcubic graph  is said to be \emph{$k$-saturated}, for $0 \le k \le 3$, if every vertex of degree~3 is adjacent to at most $k$ vertices of degree~3. Equivalently, a $3$-vertex in a $k$-saturated graph has at least $3-k$ neighbors of degree~2.  In a 3-saturated subcubic graph, we call a 3-vertex \emph{heavy} if all its neighbors are 3-vertices. A $3$-saturated subcubic graph  is said to be \emph{$(3,k)$-saturated}, for $0\leq k\leq 3$, if  each heavy vertex is adjacent to at most $k$ heavy vertices. This classification, introduced in~\cite{12,13}, provides a natural hierarchy of subcubic graphs according to the density of adjacent $3$-vertices and will serve as one of the structural parameters in our analysis. A \emph{chord} in a cycle is an edge joining two vertices that are not adjacent on the cycle. 

Given a non-decreasing sequence of positive integers $S=(s_1,s_2,\ldots ,s_k)$, an \emph{$S$-packing coloring} of a graph $G$ is a partition of $V(G)$ into subsets $V_1,V_2,\ldots ,V_k$ such that for any two distinct vertices $u,v\in V_i$, we have $dist_G(u,v)>s_i$. 
The concept, introduced by Goddard \emph{et al.}~\cite{7}, has since inspired intensive research, particularly on graphs of maximum degree three (see the survey~\cite{2}). 
The smallest integer $k$ such that $G$ admits a $(1,2,\ldots ,k)$-packing coloring is called the \emph{packing chromatic number} of $G$, denoted $\chi_\rho(G)$. 
This parameter was introduced by Goddard \emph{et al.}~\cite{7} under the name \emph{broadcast chromatic number}, who showed that deciding whether $\chi_{\rho}(G)\le4$ is NP-hard. The study of bounding $\chi_{\rho}(G)$ and $\chi_{\rho}(S(G))$, where $S(G)$ denotes the graph obtained from $G$ by subdividing every edge, has been the subject of many works~\cite{1,3,4,6,9,16}. The question of whether $\chi_{\rho}$ is bounded for subcubic graphs was answered negatively by Balogh, Kostochka, and Liu~\cite{1}, with explicit unbounded constructions later provided in~\cite{2}. 
However, Balogh, Kostochka, and Liu~\cite{1} proved that $\chi_{\rho}(S(G)) \le 8$ for every subcubic graph $G$, while Gastineau and Togni~\cite{6} raised the question of whether $\chi_{\rho}(S(G)) \le 5$. This conjecture was later supported by Bre\v{s}ar \emph{et al.}~\cite{4}, who proposed that $\chi_{\rho}(S(G)) \le 5$ holds for all subcubic graphs. 
Moreover, Gastineau and Togni~\cite{16} observed that if a subcubic graph $G$ is $(1,1,2,2)$-packing colorable, then it satisfies $\chi_{\rho}(S(G)) \le 5$.

Several significant results have been obtained on the $S$-packing coloring of subcubic graphs. It was recently shown in \cite{11} that every subcubic graph $G$ admits a $(1,1,2,2,3)$-packing coloring, which implies the general upper bound $\chi_{\rho}(S(G)) \le 6$ for this class. 
Later, El~Zein and Mortada~\cite{AM1} proved that every subcubic graph with minimum degree less than three is $(1,1,2,2)$-packing colorable, confirming that shorter $S$-sequences suffice in the subcubic case. Their work also established that every connected cubic graph admits a $(1,1,2,2,k)$-packing coloring in which at most one vertex receives color~$k$, for any $k \ge 3$, thereby improving the result of~\cite{11}. This line of research continues and unifies earlier contributions by Mortada and Togni, who introduced and systematically developed the study of $S$-packing colorings of $k$-saturated subcubic graphs~\cite{12,13,14,15}, as well as by Mortada and El~Zein~\cite{AM3}, whose recent work further refined the saturation framework. The present paper builds upon these foundations, combining the saturation-based perspective with the newly introduced local girth parameter to obtain a unified theory of $S$-packing colorability.

Subsequently, Bre\v{s}ar, Kuenzel, and Rall~\cite{5} showed that every claw-free cubic graph is $(1,1,2,2)$-packing colorable. 
Their result, obtained through a detailed structural decomposition, revealed the importance of local restrictions in achieving compact $S$-packings and inspired the present investigation. 

The notion of \emph{local girth} provides a natural parameter for describing the local cyclic structure of a vertex. 
For a vertex $v$, its local girth, denoted $g(v)$, is the length of the shortest cycle containing $v$, and it is supposed to be $\infty$ if $v$ belongs to no cycle. 
A claw-free cubic graph is precisely a cubic graph in which every vertex has local girth~3. 
Consequently, Bre\v{s}ar, Kuenzel, and Rall~\cite{5} admits a natural reformulation in terms of \emph{local girth}: 
Every cubic graph in which  every vertex has local girth~$3$ is $(1,1,2,2)$-packing colorable.

Motivated by this characterization, El Zein and Mortada~\cite{AM2} proved that every subcubic graph such that each 3-vertex has local girth at most 4 is $(1,2,2,2,2,2)$-packing colorable, extending the claw-free case beyond the cubic setting.

In this paper, we unify and extend these two directions---the study based on the saturation level and that guided by the local girth---by analyzing the \emph{impact of local girth on the $S$-packing coloring of $k$-saturated subcubic graphs}.
We develop a general framework in which the local girth  of $3$-vertices governs the admissible packing configurations within each saturation class. For a subcubic graph $G$, we denote by $g_3(G) = \max\{\mathrm{g}(v) : d_G(v)=3\}$ the local girth of 3-vertices, that is, the smallest length of a cycle containing any 3-vertex. Our results cover and generalize several known cases in a unified manner.

For clarity and completeness, all the obtained bounds for the $S$-packing colorings of $k$-saturated subcubic graphs under various local girth constraints are summarized in Table~\ref{table}. 
This table highlights the interplay between the two structural parameters---the saturation level and the local girth of $3$-vertices---and illustrates how their combination determines the admissible $S$-packing sequences. 
Some of these results hold with exceptions for a small number of specific graphs, which are later identified and treated separately in the corresponding sections. 
The table also presents the open problems that naturally arise from these results, showing the frontier between proven cases and conjectured extensions. 
In each case, the presented bounds are proven to be sharp, with explicit constructions provided whenever equality holds.

The proofs rely on a local and constructive analysis guided by the interaction between the saturation level and the local girth of $3$-vertices. 
For each considered class, we identify the structural constraints that determine the admissible $S$-packing patterns and verify that these constraints guarantee the existence of the corresponding colorings. The constructive nature of the most of these arguments naturally leads to explicit procedures that can be formulated as efficient algorithms for producing the required $S$-packing colorings.

\begin{table}[H]
\centering
\caption{Summary of $S$-packing colorability results for $k$-saturated subcubic graphs under local girth constraints. 
Each entry indicates the best known $S$-sequence guaranteeing an $S$-packing coloring}
\label{table}
\begin{tabular}{!{\vrule width 1pt}c!{\vrule width 1pt}c!{\vrule width 1pt}c!{\vrule width 1pt}c!{\vrule width 1pt}c!{\vrule width 1pt}}
\Xhline{1pt}
\textbf{\makecell{Subcubic Graph\\ Class}} & $g_{3}(\cdot)$ & \textbf{Proven} & \textbf{Disproven} & \textbf{Conjectured} \\
\Xhline{1pt}
\multirow{4}{*}{$0$-Saturated} 
& \multirow{4}{*}{$4$} & $(1,2,2,3)$ & $(1,2,2)$ & $(1,2,2,4)$ \\
\cline{3-5}
&  & \makecell{$(2,2,2,2,3)$ \\ except $\mathcal{G}_1$} & $(2,2,2,2)$ & \makecell{$(2,2,2,2,4)$ \\ except $\mathcal{G}_1$}\\
\cline{3-5}
&   & $(1,2,3,4,5,k);\; k\geq 6$ &  & $(1,2,3,4)$ \\
\Xhline{1pt}
\multirow{6}{*}{$1$-Saturated} 
& \multirow{4}{*}{$3$} & $(1,1,3,k);\; k \geq 3$ & $(1,1,3)$ & $(1,2,3,3)$ \\
\cline{3-5}
&   & $(1,2,2,3)$ & $(1,2,2)$ & $(1,2,2,4)$ \\
\cline{3-5}
&   & $(2,2,2,2,3)$ & $(2,2,2,2)$ & $(2,2,2,2,4)$ \\
\cline{3-5}
&   & $(1,2,3,4,5,k);\; k\geq 6$ &  & $(1,2,3,4,5)$ \\
\cline{2-5}
& $4$ & $(1,2,2,2)$ & $(1,2,2)$ & \\
\cline{2-5}
& $\geq 5$ &  & $(1,2,2,2)$ & \\
\Xhline{1pt}
\multirow{5}{*}{$2$-Saturated} 
& \multirow{3}{*}{$3$} & $(1,1,2)$ & $(1,1,3,3)$ & \\
\cline{3-5}
&   & $(1,2,2,2)$ & $(1,2,2,3)$ &  \\
\cline{3-5}
&   & $(2,2,2,2,2)$ & $(2,2,2,2)$ & $(2,2,2,2,3)$ \\
\cline{2-5}
& \multirow{2}{*}{$\geq 4$} &  & $(1,2,2,2)$ & $(1,2,2,2,2)$ \\
\cline{3-5}
&   &  & $(2,2,2,2,2)$ & $(1,1,2)$ \\
\Xhline{1pt}
\multirow{3}{*}{$(3,0)$-Saturated} & 
\multirow{3}{*}{$3$} & $(1,1,2,4)$ & $(1,1,3,3)$ & $(1,1,2)$ \\
\cline{3-5}
& & $(1,2,2,2,2)$ & & \\
\cline{3-5}
& & $(2,2,2,2,2,2)$ & & \\
\Xhline{1pt}
$(3,1)$-Saturated & $3$ & $(1,1,2,3)$ & $(1,1,3,3)$ & \\
\Xhline{1pt}
$(3,2)$-Saturated & $3$ & $(1,1,3,3,3)$ &  $(1,1,3,3)$ & \\
\Xhline{1pt}
\multirow{5}{*}{$(3,3)$-Saturated} & 
\multirow{3}{*}{$3$} & $(1,1,2,2)$ ~\cite{5}&  \makecell{$(1,1,3,3,3)$ \\for $\mathcal{G}_{11}$} & \makecell{$(1,1,3,3,3)$ \\except $\mathcal{G}_{11}$}\\
\cline{3-5}
& & & & $(1,1,2,3)$ \\
\cline{3-5}
& & & & $(2,2,2,2,2,2,2)$\\
\cline{2-5}
& \multirow{2}{*}{$4$} & $(1,2,2,2,2,2)$ ~\cite{AM2}&  & $(1,1,2,3)$ \cite{AM2}\\
\cline{3-5}
& & & & $(1,2,2,2,2)$ \cite{AM2} \\
\Xhline{1pt}
\end{tabular}
\end{table}

The paper is organized according to the saturation level of the considered subcubic graphs. Section~2 deals with graphs of low saturation (0- and 1-saturated), where the influence of short local cycles on admissible $S$-packings is first analyzed. Section~3 focuses on 2-saturated graphs, establishing refined bounds that depend on the interaction between 3-vertices and local girth constraints. Section~4 is devoted to the most intricate case of 3-saturated graphs, where we classify several subclasses and determine optimal $S$-packing colorings. Finally, Section~5 provides a synthesis of all obtained results, discusses sharpness and exceptional graphs, and concludes with open problems motivated by the observed connection between local girth and packing colorability.

\section{$0$-Saturated and $1$-Saturated}
We start by a lemma on graphs having connected components paths and/or cycles.
\begin{lemma}\label{lemma 1}
    Let $G$ be a connected graph such that $\Delta(G)\leq 2$. Then,\begin{itemize}
        \item[(i)] $G$ is $(1,1,k)$-packing colorable for every $k\geq 1$.
        \item[(ii)] $G$ is $(1,2,2)$-packing colorable unless $G=C_5$,
        \item[(iii)] $G$ is $(2,2,2,2)$-packing colorable unless $G=C_5$.
        \item[(iv)] $G$ is $(1,2,4,5,k)$-packing colorable for every $k\geq 6$.
    \end{itemize}
\end{lemma}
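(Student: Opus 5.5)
Since $G$ is connected with $\Delta(G)\le 2$, it is either a path $P_n$ or a cycle $C_n$ ($n\ge 3$). The plan is to treat each item with an explicit periodic colouring of the vertices $v_1,\dots,v_n$ taken in their natural order. For a path we truncate the periodic pattern, which is always harmless. For a cycle the only issue is the wrap-around, and we deal with it by building the cyclic word from \emph{blocks} whose junctions are safe.

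For (i), paths and even cycles are bipartite, so the two colour classes of the bipartition can be given the two colours~$1$. For an odd cycle, give one vertex colour $k$. The remaining vertices induce a path, which we colour with the two colours~$1$. Since $k$ is used only once, no distance condition for it arises.

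For (ii), write the colours as $1,2_a,2_b$ and use the blocks $B_3=(1,2_a,2_b)$ and $B_4=(1,2_a,1,2_b)$.
\begin{itemize}
\item Every block starts with $1$ and ends with $2_b$, and inside a block the colour $1$ never sits on adjacent positions. Hence the $1$-class is independent in any cyclic concatenation.
\item Within a block and across a junction, two consecutive occurrences of $2_a$ (or of $2_b$) are at distance at least $3$. So each $2$-class has pairwise distances greater than $2$, even cyclically.
\end{itemize}
Since every $n\ge 3$ with $n\neq 5$ can be written as $3a+4b$ with $a,b\ge 0$, this colours all such cycles. For $C_5$ we argue non-colourability: the $1$-class has size at most $2$, and since $C_5$ has diameter $2$ each $2$-class has size at most $1$. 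Thus at most $4<5$ vertices can be coloured.

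Item (iii) is the same argument, now with the blocks $(a,b,c)$ and $(a,b,c,d)$ of four colours of type $2$. Repeated colours are again always at distance $3$ or $4$. The graph $C_5$ is the exception because the square of $C_5$ is $K_5$.

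For (iv) the natural pattern is the period-$8$ word $1,2,1,4,1,2,1,5$.
\begin{itemize}
\item Colour $2$ recurs at distance $4>2$.
\item Colours $4$ and $5$ recur at distance $8>5$.
\item The $1$'s are independent.
\end{itemize}
This settles paths and the cycles $C_{8q}$ immediately. For other cycles I would write $n=8q+r$ with $0<r<8$. I would lay the pattern along a path $v_1\dots v_{n'}$, chosen so that it begins and ends with colour $1$, and use the leftover positions (at most two) for $1$'s plus one vertex coloured $k$, placed at the wrap-around. Where the two ends would bring two $2$'s, or a $4$/$5$ pair, too close, I would recolour the offending vertex with $k$. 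Since $k\ge 6$, I would arrange the at most two $k$'s to be at distance greater than $k$, or use only one when $n$ is small. Small cycles with $n\le 2k$ would be checked by hand: there only one vertex coloured $k$ is allowed, together with $1$'s on an independent set and a few vertices coloured $2,4,5$.

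I expect this wrap-around repair in (iv), for each residue $r$, to be the main obstacle. In particular one must check that colours $4$ and $5$ keep pairwise distances greater than $4$ and $5$ across the seam. I would first try handling each residue class of $n$ modulo $8$ separately with an explicit seam word.
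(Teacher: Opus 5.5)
Your proofs of (i)--(iii) are correct and are essentially the paper's. Its words for (ii) are built from the blocks $1\,2_a\,2_b$ and $1\,2_a\,1\,2_b$ (up to rotation), and its (iii) comes from recolouring the $1$'s of (ii) by $2_c,2_d$. Organizing this through $n=3a+4b$ is cleaner than the paper's case split modulo $12$.

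\textbf{The gap is in (iv).} You name the seam repair as the main obstacle but never carry it out, and the devices you propose for it do not work as stated.

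First, the lemma is asserted for \emph{every} $k\ge 6$. For fixed $n$, any $k\ge\lfloor n/2\rfloor$ is at least the diameter of $C_n$, so colour $k$ can then be used at most once. Allowing two $k$'s therefore never saves work: for every $n$ you need a colouring of $C_n$ with at most one vertex of colour $k$, and such a colouring then works for all $k$ simultaneously. Consequently:
\begin{itemize}
\item ``check the cycles with $n\le 2k$ by hand'' is not a finite check, since $k$ is unbounded and every $n$ falls into it;
\item ``recolour the offending vertex with $k$'' is illegitimate whenever it creates a second $k$.
\end{itemize}

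Second, for odd $n$ your leftover-positions scheme breaks down. If the path part begins and ends with $1$, there are two leftover vertices, each adjacent to a $1$. So one of them must receive $2$, $4$ or $5$, and checking that choice across the seam is exactly the missing content.

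\textbf{What is needed} is an explicit family of cyclic words in which $k$ occurs at most once. The paper supplies one for each class of $n$ modulo $8$ (its cases $n=4m+s$ split by the parity of $m$). Each word is $W^j$ followed by a short tail, where $W=1,2,1,4,1,2,1,5$.

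A uniform choice in the spirit of your plan: for $n=8j+r$ with $0\le r\le 7$, colour $C_n$ by $W^j$ followed, if $r\ge 1$, by the first $r-1$ letters of $1,2,1,4,1,5$ and then a single vertex of colour $k$. The $1$'s stay independent, and all new gaps occur at the seam:
\begin{itemize}
\item the tail contains a $2$ only when $r\ge 3$, and then the wrap-around gap for colour $2$ is exactly $r$;
\item the tail contains a $4$ only when $r\ge 5$, and then the wrap-around gap for colour $4$ is exactly $r$;
\item the tail contains a $5$ only when $r=7$, at distance $6$ after the preceding $5$;
\item every other new gap is at least $4$ for colour $2$ and at least $6$ for colours $4$ and $5$.
\end{itemize}
For $3\le n\le 7$ (that is, $j=0$), each of $2,4,5$ occurs at most once, so only the independence of the $1$'s needs checking.
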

\begin{proof}
    It is sufficient to suppose that $G$ is a cycle of order $n$. Trivially, (i) holds. Here is the proof of (ii).\\
    \textbf{Case 1:} $n=12m+r$ for some $m\geq 0$ and $r\in \{0,3,6,9\}$.\\
    Here $n$ is a multiple of $3$. Color the vertices of $G$ using the sequence $12_a2_b\dots 12_a2_b.$\\
    \textbf{Case 2:} $n=12m+r$ for some $m\geq 0$ and $r\in \{1,4,7,10\}$.\\
    Here $n=3p+1$ for some $p\geq 1$. Color the first four vertices by $12_a12_b$, then the number of the remaining vertices is a multiple of $3$, color them using the sequence $12_a2_b\dots 12_a2_b$.\\
    \textbf{Case 3:} $n=12m+2$ for some $m\geq 1$.\\
    Here $n=4p+2$ for some $p\geq 3$. Color the first six vertices using the sequence $2_a12_b2_a12_b$, then the number of the remaining vertices is a multiple of $4$, color them using the sequence $12_a12_b\dots 12_a12_b$.\\
    \textbf{Case 4:} $n=12m+8$ for some $m\geq 0$.\\
    Here $n=4p$ for some $p\geq 2$. Use the sequence $12_a12_b\dots 12_a12_b$.\\
    \textbf{Case 5:} $n=12m+11$ for some $m\geq 0$.\\
    Here $n=4p+3$ for some $p\geq 2$. Color the first three vertices using the sequence $12_a2_b$, then the number of the remaining vertices is a multiple of $4$, color them using the sequence $12_a12_b\dots 12_a12_b$.\\
    \textbf{Case 6:} $n=12m+5$ for some $m\geq 1$.\\
    Color the first seventeen vertices using the sequence $2_b12_a12_b12_a2_b12_a2_b12_a2_b12_a1$, then the number of the remaining vertices is a multiple of $12$, color them using the sequence $2_b12_a12_b12_a12_b12_a1\dots 2_b12_a12_b12_a12_b12_a1$.\\
    \\
    The proof of (ii) can be extended to a proof of (iii) by recoloring the vertices that are colored by $1$ using the colors $2_c$ and $2_d$ properly. Here is the proof of (iv).\\
    \textbf{Case 1:} $n=4m$ for some $m\geq 1$.\\
    If $m$ is even, color the vertices of $G$ using the sequence $1,2,1,4,1,2,1,5,\dots$. If $m$ is odd, color the vertices of $G$ using the sequence $1,2,1,4,1,2,1,5,\dots ,1,2,1,5,1,2,1,k$.\\
    \textbf{Case 2:} $n=4m+1$ for some $m\geq 1$.\\
    If $m$ is even, use the sequence $1,2,1,4,1,2,1,5,\dots ,1,2,1,4,1,5,2,1,k$. If $m$ is odd, use the sequence $1,2,1,4,1,2,1,5,\dots ,1,2,1,5,1,4,2,1,k$.\\
    \textbf{Case 3:} $n=4m+2$ for some $m\geq 1$.\\
    If $m$ is even, use the sequence $1,2,1,4,1,2,1,5,\dots ,1,2,1,4,1,5,1,2,1,k$. If $m$ is odd, use the sequence $1,2,1,4,1,2,1,5,\dots ,1,2,1,5,1,4,1,2,1,5$.\\
    \textbf{Case 4:} $n=4m+3$ for some $m\geq 1$.\\
    If $m$ is even, use the sequence $1,2,1,4,1,2,1,5,\dots ,1,2,1,4,1,5,1,2,4,1,k$. If $m$ is odd, use the sequence $1,2,1,4,1,2,1,5,\dots ,1,2,1,5,1,4,1,2,k,1,5$.\\
\end{proof}

The following theorem establishes several results concerning $0$- and $1$-saturated subcubic graphs in which  the local girth of 3-vertices, $g_3(\cdot)$, is bounded by $4$ and $3$, respectively.
\begin{theorem}\label{theorem 1}
Let $G$ be a subcubic graph that satisfies one of the following:\begin{itemize}
    \item[(a)] $G$ is $0$-saturated and $g_3(G)\leq 4$,
    \item[(b)] $G$ is $1$-saturated and $g_3(G)=3$.
\end{itemize}
Then, $G$ is $S$-packing colorable, when one of the following holds: \begin{itemize}
    \item[(i)] $S=(1,1,3,k)$ for any $k\geq 3$,
    \item[(ii)] $S=(1,2,2,3)$,
    \item[(iii)] $S=(2,2,2,2,3)$ and $G\neq \mathcal{G}_1$,
    \item[(iv)] $S=(1,2,3,4,5,k)$ for any $k\geq 6$.
\end{itemize}
\end{theorem}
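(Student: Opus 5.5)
We reduce to the structure of the $3$-vertices and then use Lemma~\ref{lemma 1} on what remains. In case (a), every $3$-vertex $v$ has only $2$-vertices as neighbours, and it lies on a cycle of length at most $4$. In case (b), every $3$-vertex lies on a triangle, and it has at most one $3$-neighbour. The first step is a structural description of the \emph{units} formed around $3$-vertices.

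In case (a), a $3$-vertex $v$ lies on a $4$-cycle $v a w b$ with $a,b$ of degree $2$, so $w$ is a $3$-vertex. A triangle through $v$ is impossible, since its other two vertices would be adjacent $2$-vertices that both need $v$ and each other, giving degree $2$ only; that is consistent, so triangles are allowed too. We therefore classify the units around $v$ as follows:
\begin{itemize}
\item a triangle $v a b$ with $d(a)=d(b)=2$, which is then a pendant triangle attached through the third edge of $v$;
\item a $4$-cycle $v a w b$ through two $3$-vertices, whose third edges leave the unit.
\end{itemize}
In case (b), the triangle $vxy$ through $v$ contains at most one other $3$-vertex, so the triangle is either pendant at $v$ or an edge-triangle $vxy$ with $x$ of degree $3$ and $y$ of degree $2$.

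In both cases we contract each unit to a bounded gadget. The goal is that the graph decomposes into gadgets joined by paths of $2$-vertices, with at most two exits per gadget.

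The coloring plan is to first choose colors on the gadgets, using the large colors sparingly, and then extend along the connecting paths. For (ii) we use $(1,2,2,3)$. On a triangle or $4$-cycle unit we color the $3$-vertices $1$. We color the $2$-vertices inside the unit with $2_a$ and $2_b$, or with $3$ when a third distinct color is forced; for instance, a triangle $vab$ gets $1,2_a,2_b$ if its exit path allows it. The key point is that each unit acts locally like a vertex of a path or cycle, so the paths between units can be colored by the periodic patterns of Lemma~\ref{lemma 1}(ii). The color $3$ is used at most once per unit to repair parity or length mismatches, exactly like the tail segments in the case analysis of Lemma~\ref{lemma 1}. Because units are separated by at least one $2$-vertex (case (a)), or attach through a single edge (case (b)), distinct uses of color $3$ are at distance at least $4$ if we place them at centres of units.

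The other sequences follow the same scheme.
\begin{itemize}
\item For (i), we color the $3$-vertices and paths with $1,1$ as a proper $2$-coloring wherever the graph is bipartite locally. Odd cycles, meaning triangles or odd connecting cycles, receive one vertex of color $3$. We then show that two such vertices are at distance at least $4$, or else we use the color $k$ for at most one vertex per pair of conflicts. Since $k\ge 3$, this works as in Lemma~\ref{lemma 1}(i).
\item For (iii), we replace color $1$ by $2_c,2_d$ as in Lemma~\ref{lemma 1}(iii). This fails only for the exceptional graph $\mathcal{G}_1$, which plays the role of $C_5$.
\item For (iv), we use the periodic patterns $1,2,1,4,1,2,1,5$ of Lemma~\ref{lemma 1}(iv), placing color $3$ on gadget vertices.
\end{itemize}

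The main obstacle is the interaction of color $3$ (or $k$) between consecutive gadgets joined by short paths, when the path length is $0$, $1$ or $2$. There the periodic patterns have no room to absorb a mismatch. We would handle this by an induction on the number of $3$-vertices. Remove one unit, color the rest by induction (which is valid since the saturation and girth conditions are hereditary on $3$-vertices that stay degree $3$, and degree-$2$ vertices impose nothing), and then extend. The extension needs a short case analysis on the colors seen within distance $3$ of the unit, and the sparse neighborhood guaranteed by saturation leaves enough free colors. Identifying $\mathcal{G}_1$ as the unique obstruction in (iii) comes out of this case analysis.
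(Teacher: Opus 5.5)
Your proposal leaves the decisive step unproved, and the rules you do specify fail as stated. The whole argument rests on the last sentence: remove a unit, color the rest by induction, and extend because saturation leaves enough free colors. But no invariant is carried through the induction. Without one, an arbitrary coloring of the smaller graph may already use color $3$ within distance $3$ of the removed unit, and then the extension is blocked.

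The explicit rules also break down:
\begin{itemize}
\item In case (b), your own edge-triangle unit $vxy$ has two adjacent $3$-vertices, so they cannot both receive color $1$.
\item In case (a), the vertex $w$ of the $4$-cycle $vawb$ need not be a $3$-vertex.
\item Placing color $3$ at centres of units does not by itself give distance at least $4$. If the centre is a $3$-vertex, two $4$-cycle units joined through one $2$-vertex ($w\,c\,v'$) put the two $3$-vertices at distance $2$. In $\mathcal{G}_3$, every vertex of one triangle is within distance $3$ of every vertex of the other.
\item For (i), color $k$ cannot be spent once per pair of conflicts: $k$ is arbitrary and two uses must be more than $k$ apart.
\item For (iii), the class of color $1$ in a general graph cannot simply be split into two $2$-packings; that works only on a path or cycle. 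The claim that $\mathcal{G}_1$ is the only failure is not argued, and $\mathcal{G}_2$ also arises from $C_5$ and must be colored by hand.
\end{itemize}

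The missing idea is the one the paper uses. Each of the four sequences contains a color $3$, and its remaining colors are exactly those that Lemma~\ref{lemma 1} handles on paths and cycles. The paper proceeds as follows.
\begin{itemize}
\item Induct on the number of $3$-vertices. At each step, delete a single $2$-vertex $x_1$ on a shortest cycle $C_x$ through a $3$-vertex $x$; such an $x_1$ exists by saturation.
\item Since $|C_x|\le 4$, this deletion preserves all distances, connectivity and the hypotheses, and $x$ stops being a $3$-vertex.
\item Strengthen the statement to \emph{good} colorings: color $3$ appears only on $2$-vertices that are adjacent to a $3$-vertex and lie on a cycle of length at most $g_3(G)$.
\item This invariant makes the extension work. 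A short case analysis shows $dist_G(x_1,u_1)\ge 4$ for every vertex $u_1$ colored $3$, so $x_1$ simply receives color $3$.
\item At the bottom of the induction a single path or cycle remains. Lemma~\ref{lemma 1} then applies, and color $k$ is used at most once.
\item The configurations where the invariant cannot be maintained ($C_5$, $\mathcal{G}_1$, $\mathcal{G}_2$, $\mathcal{G}_3$) are checked directly.
\end{itemize}
Your gadget picture might be turned into a direct proof, but as written it neither fixes where color $3$ goes nor proves the distance-$4$ separation, which is the real content of the theorem.
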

\begin{proof}
Without loss of generality, we may suppose that $G$ is connected. An $S$-packing coloring of $G$ is said to be good if the color $3$ is used only for $2$-vertices that are adjacent to a $3$-vertex and that are in a cycle of order at most $g_3(G)$. We will prove, by induction on the number of $3$-vertices in $G$, that $G$ has a good $S$-packing coloring, except for the following cases, referred to as \emph{bad exceptions}:\begin{itemize}
    \item[(1)] $G=C_5$ and $S=(2,2,2,2,3)$,
    \item[(2)] $G=C_5$ and $S=(1,2,2,3)$,
    \item[(3)] $G=\mathcal{G}_1$ and $S=(1,2,2,3)$,
    \item[(4)] $G=\mathcal{G}_2$ and $S=(1,2,2,3)$,
    \item[(5)] $G=\mathcal{G}_3$ and $S=(1,2,2,3)$.
\end{itemize}
First, suppose that $G$ has no $3$-vertices. Then, the result follows from Lemma \ref{lemma 1}. Now, assume that $G$ contains $3$-vertices. Let $x$ be a $3$-vertex and $x_1,x_2,x_3$ be the neighbors of $x$ such that $x_1$ and $x_2$ belong to a smallest cycle containing $x$, called $C_x$. As $G$ is $i$-saturated, where $i\in \{0,1\}$, we may assume that $x_1$ is a $2$-vertex. Let $G'=G-x_1$. Clearly, $dist_{G'}(u,v)=dist_G(u,v)$ for all $u,v\in V(G')$. Moreover, $G'$ is $i'$-saturated, where $i'\leq i$, while $G$ is $i$-saturated. In addition, $g_3(G')\leq g_3(G)$. Thus, by the induction hypothesis, $G'$ has a good $S$-packing coloring unless $G'$ and $S$ form a bad exception.\\
\\
\textbf{Case 1:} $G'$ has a good $S$-packing coloring.\\
Let $u_1$ be a vertex in $G'$ colored by $3$. Then, $u_1$ is a $2$-vertex adjacent to a $3$-vertex in $G'$, say $u$, such that $u_1$ and $u$ belong to a cycle in $G'$ of order at most $g_3(G)$, called $C_u$.

\begin{claim}
    $dist_G(x_1,u_1)\geq 2$.
\end{claim}
\begin{proof}
On the contrary, suppose that $dist_G(x_1,u_1)=1$, that is $x_1$ and $u_1$ are adjacent. Then, $u_1$ is a $3$-vertex in $G$. As $G$ is $i$-saturated, where $i\in \{0,1\}$, and $u_1$ is adjacent to a $3$-vertex other than $x$, then $u_1$ is not adjacent to $x$. If $u_1=x$, then $G$ is not $0$-saturated. Hence, $g_3(G)=3$ and $x_1$ is adjacent to $x_2$. Thus, $x_2$ is not a $3$-vertex in $G'$ and so $u=x_3$. Consequently, $x$ is adjacent to two $3$-vertices, $x_2$ and $x_3$, a contradiction. So, $u_1\neq x$. Recall that $x,x_1,x_2$ belong to the cycle $C_x$ of order at most $4$. Now, as $x_1$ and $u_1$ are adjacent and since $G$ is $i$-saturated, $u_1$ belongs to the same cycle, that is $C_x=C_u$. As $u_1$ is neither $x$ nor $x_2$, $C_x$ is of order $4$ and $u_1$ is adjacent to $x_2$. Thus, $g_3(G)=4$ and $G$ is $0$-saturated. But $u_1$ and $u$ are two adjacent $3$-vertices, a contradiction.
\end{proof}

\begin{claim}
    $dist_G(x_1,u_1)\geq 3$.
\end{claim}
\begin{proof}
On the contrary, suppose that $dist_G(x_1,u_1)=2$. Then, $x_1$ and $u_1$ have a common neighbor. If $u_1$ is adjacent to $x$ and $x_2$, that is $u=x_2$, then $G$ is not $0$-saturated and so $g_3(G)=3$. Hence, $V(C_u)=\{x,x_2,u_1\}$. Recall that $u$ is a $3$-vertex in $G'$, then the neighbor of $u$ other than $u_1$ and $x$ is not $x_1$. Thus, $C_x$ is of order at least $4$, a contradiction. Now, if $u_1$ is adjacent to $x$ but not to $x_2$, then $C_u$ is of order $4$ and so $G$ is $0$-saturated. Thus, $x_2$ is a $2$-vertex. As $u$ is a $3$-vertex in $G'$, $C_x$ is of order at least $5$, a contradiction. Similarly, if $x_2$ is a common neighbor of $x_1$ and $u_1$, we may find a contradiction. Finally, assume that $x_1$ and $u_1$ have a common neighbor $y$ other than $x$ and $x_2$. Then, $C_x=xx_1yx_2$ and so $g_3(G)=4$ and $G$ is $0$-saturated. Hence, $x_2$ is a $2$-vertex. Thus, $C_u$ is of order at least $5$, a contradiction.
\end{proof}

\begin{claim}
    $dist_G(x_1,u_1)\geq 4$.
\end{claim}
\begin{proof}
On the contrary, suppose that $dist_G(x_1,u_1)=3$. Then, there exist a neighbor $y$ of $x_1$ and a neighbor $z$ of $u_1$ such that $y$ and $z$ are adjacent. Note that $z$ is not adjacent to $x_1$. First, suppose that $y=x$. If $z=x_2$, then $z$ is adjacent to a third vertex that belongs to $C_x$, and so $G$ is $1$-saturated with $g_3(G)\geq 4$, a contradiction. So, we may assume that $z=x_3$. As $z\in V(C_u)$ and neither $x_1$ nor $x_2$ is adjacent to $u_1$, $z$ has a third neighbor, say $w$, other than $x,x_1,x_2$, such that $w\in V(C_u)$. Now, $x$ and $z$ are two adjacent $3$-vertices. Then, $G$ is $1$-saturated and $g_3(G)=3$. Hence, $x_2$ is a $2$-vertex and $G=\mathcal{G}_3$. In this case, one can check that $G$ has a good $S$-packing coloring unless $S=(1,2,2,3)$, which is a bad exception. Now, assume that $y\neq x$. If $y=x_2$, then $y$ is a $3$-vertex and so $z$ is a $2$-vertex. Thus, the order of $C_u$ is at least $5$, a contradiction. Hence, we may assume that $y\neq x_2$. Thus, $C_x=xx_1yx_2$ and $G$ is $0$-saturated. Again, $z$ is a $2$-vertex which leads to a contradiction.
\end{proof}\\
\\
Now, as $dist_G(x_1,u_1)\geq 4$, we can color $x_1$ by $3$ to obtain a good $S$-packing coloring of $G$.\\
\\
\textbf{Case 2:} $G'$ and $S$ form a bad exception.\\
If $G'=\mathcal{G}_i$ for some $i\in \{1,2,3\}$, then $G$ is not $1$-saturated, a contradiction. So, we may assume that $G'=C_5$. Then, $G=\mathcal{G}_2$ or $G=\mathcal{G}_1$. If $G=\mathcal{G}_2$ and $S=(1,2,2,3)$, then we are in a bad exception. If $G=\mathcal{G}_2$ and $S=(2,2,2,2,3)$, then $G$ has an $S$-packing coloring (see Figure \ref{figure 2}). If $G=\mathcal{G}_1$, then $S=(1,2,2,3)$, and so we are in a bad exception.\\
\\
We are left with the bad exceptions that are represented in Figure \ref{figure 2}.
\end{proof}

As corollaries, the studied graphs have a bounded packing chromatic number.
\begin{corollary}
    If $G$ is a $0$-saturated subcubic graph such that $g_3(G)\leq 4$, then $\chi_{\rho} (G)\leq 6$.
\end{corollary}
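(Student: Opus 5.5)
This corollary follows directly from Theorem~\ref{theorem 1}(iv). A $0$-saturated subcubic graph $G$ with $g_3(G)\le 4$ satisfies hypothesis (a) of Theorem~\ref{theorem 1}. Applying item (iv) with $k=6$ gives a $(1,2,3,4,5,6)$-packing coloring of $G$, and this holds with no exceptional graphs, since the exceptions $\mathcal{G}_1$, $C_5$, $\mathcal{G}_2$, $\mathcal{G}_3$ only arise for the sequences $(1,2,2,3)$ and $(2,2,2,2,3)$. By definition of the packing chromatic number as the smallest $k$ for which $G$ admits a $(1,2,\dots,k)$-packing coloring, we conclude $\chi_\rho(G)\le 6$.

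The only point needing care is that item (iv) is really established for all graphs in the class. The induction in the proof of Theorem~\ref{theorem 1} starts from Lemma~\ref{lemma 1}(iv), which has no exceptions, so the base case is fine. The inductive step extends a good coloring of $G'=G-x_1$ by giving $x_1$ color $3$, using the three claims that $dist_G(x_1,u_1)\ge 4$ for every vertex $u_1$ colored $3$.

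One small check on the base case: Lemma~\ref{lemma 1}(iv) provides a $(1,2,4,5,k)$-coloring. This is a valid $(1,2,3,4,5,k)$-coloring in which color $3$ is simply unused, so it is trivially good. Hence the induction delivers the coloring for $S=(1,2,3,4,5,6)$, and the corollary follows. No step is a real obstacle; at most, one should note that a disconnected $G$ is handled componentwise, exactly as in the first line of that proof.
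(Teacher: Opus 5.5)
Your proposal is correct and matches the paper, which states this corollary as an immediate consequence of Theorem~\ref{theorem 1}(iv) with $k=6$ under hypothesis (a); that item carries no exceptional graphs. Your side remarks are also accurate, including that the $(1,2,4,5,k)$-coloring from Lemma~\ref{lemma 1}(iv) is a good $(1,2,3,4,5,k)$-coloring with color $3$ unused. One small nuance: in the claim ruling out $dist_G(x_1,u_1)=3$, the subcase $G=\mathcal{G}_3$ is not handled by coloring $x_1$ with $3$ but by exhibiting a good coloring of $\mathcal{G}_3$ directly, which is harmless for $(1,2,3,4,5,6)$.
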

\begin{corollary}
    If $G$ is a $1$-saturated subcubic graph such that $g_3(G)=3$, then $\chi_{\rho} (G)\leq 6$.
\end{corollary}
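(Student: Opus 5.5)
The corollary for $1$-saturated graphs with $g_3(G)=3$ follows directly from Theorem~\ref{theorem 1}, case (b), item (iv).

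The plan is to exhibit a $(1,2,3,4,5,6)$-packing coloring of $G$ and then observe that it is in particular a packing coloring with six colors. Since $G$ is $1$-saturated with $g_3(G)=3$, hypothesis (b) of Theorem~\ref{theorem 1} holds. Applying item (iv) with $k=6$ gives a partition $V_1,\dots,V_6$ of $V(G)$. Two distinct vertices of $V_i$ are at distance greater than $i$ for $i\le 5$. The vertices of $V_6$ are at mutual distance greater than $6$.

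\textbf{Step 1: no exception applies.} Item (iv) of Theorem~\ref{theorem 1} carries no exceptional graph, unlike items (ii) and (iii). Still, the induction in that proof carries a list of bad exceptions, so I must check that none of them affects the sequence $(1,2,3,4,5,k)$. That list names only the sequences $(1,2,2,3)$ and $(2,2,2,2,3)$, and the graphs it involves are $C_5$, $\mathcal{G}_1$, $\mathcal{G}_2$ and $\mathcal{G}_3$. For $S=(1,2,3,4,5,6)$ no exception occurs.

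For safety I will also check the base case. A cycle (in particular $C_5$) has such a coloring by Lemma~\ref{lemma 1}(iv). Its sequence $(1,2,4,5,k)$ omits color $3$, so it is a fortiori a $(1,2,3,4,5,k)$-coloring in which the class of color $3$ is empty.

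\textbf{Step 2: the coloring is a packing coloring.} The obtained partition is exactly a $(1,2,\dots,6)$-packing coloring. Hence $\chi_\rho(G)\le 6$ by the definition of the packing chromatic number.

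\textbf{Disconnected graphs.} If $G$ is disconnected, I will apply the above to each component separately. Vertices in different components are at infinite distance, so the union of the colorings is again a packing coloring.

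The only point needing care is Step 1. Here I expect to rely on the statement of Theorem~\ref{theorem 1} as written, since its item (iv) is unconditional for $k\ge 6$. Otherwise the argument is immediate.
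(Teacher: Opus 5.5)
Your proposal is correct and follows the paper's route exactly: the corollary is the case $k=6$ of Theorem~\ref{theorem 1}(b)(iv), since a $(1,2,3,4,5,6)$-packing coloring is by definition a packing coloring with six colors. Your extra checks are accurate but not needed beyond citing the theorem as stated; these are that the bad exceptions in the induction concern only $(1,2,2,3)$ and $(2,2,2,2,3)$, the base case via Lemma~\ref{lemma 1}(iv), and the reduction to components.
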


Now, we study $1$-saturated subcubic graphs such that $g_3(\cdot)$ is bounded by $4$.
\begin{theorem}
    Every $1$-saturated subcubic graph such that $g_3(G)\leq 4$ is $(1,2,2,2)$-packing colorable.
\end{theorem}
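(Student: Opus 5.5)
We argue by induction on the number of $3$-vertices, in the spirit of the proof of Theorem~\ref{theorem 1}. We may assume $G$ is connected. If $G$ has no $3$-vertex, then $G$ is a path or a cycle. Lemma~\ref{lemma 1}(ii) gives a $(1,2,2)$-packing coloring, hence a $(1,2,2,2)$-packing coloring, unless $G=C_5$. For $C_5$ the sequence $1,2_a,1,2_b,2_c$ works directly.

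For the induction step, let $x$ be a $3$-vertex with neighbors $x_1,x_2,x_3$, where $x_1,x_2$ lie on a shortest cycle $C_x$ through $x$, of length $3$ or $4$. Since $G$ is $1$-saturated, $x$ has at most one $3$-neighbor. So at least one of $x_1,x_2$ is a $2$-vertex, and we rename so that $x_1$ is. Let $G'=G-x_1$. Then $G'$ is still $1$-saturated, and distances between vertices of $G'$ are unchanged, since $x_1$ is a $2$-vertex on a cycle of length at most $4$ and any path through $x_1$ can be rerouted around $C_x$ without increase. Also $g_3(G')\le g_3(G)$, because every $3$-vertex of $G'$ is a $3$-vertex of $G$ other than $x$, and its shortest cycle in $G$ cannot use $x_1$ without also passing through $x$ and $x_1$'s other neighbor; I expect a short check here. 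By induction, $G'$ has a $(1,2_a,2_b,2_c)$-packing coloring $c$, and we try to extend it to $x_1$.

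Let $y$ be the other neighbor of $x_1$. If $C_x$ is a triangle then $y=x_2$; if it is a $4$-cycle then $C_x=xx_1yx_2$. The closed ball of radius~$2$ around $x_1$ in $G$ is therefore very small. In the triangle case it is contained in $\{x,x_2,x_3,w\}$, where $w$ is the third neighbor of $x_2$ (if any). In the $4$-cycle case it is contained in $\{x,y,x_2,x_3,y'\}$, where $y'$ is the third neighbor of $y$ (if any). If neither $x$ nor $y$ has color $1$, we give $x_1$ color $1$. Otherwise some neighbor of $x_1$ has color $1$, and we look for a $2$-color missing from the ball. In the triangle case, $x$ and $x_2$ are adjacent and one of them has color $1$, so at most three vertices of the ball carry $2$-colors. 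These are $x_3$, $w$, and one of $x,x_2$. In the $4$-cycle case $x$ and $y$ are at distance $2$, and the vertices carrying $2$-colors among $\{x,y,x_2,x_3,y'\}$ are constrained: for example, $x_2$ is adjacent to both $x$ and $y$. A count shows that a free $2$-color exists in all but a few configurations.

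The main obstacle is the residual configuration in which a neighbor of $x_1$ is colored $1$ and the three colors $2_a,2_b,2_c$ all appear in the radius-$2$ ball of $x_1$. My first attempt is a local recoloring: uncolor the neighbor of $x_1$ that has color $1$ and give it to $x_1$ instead, or swap the colors of $x$ and $x_2$ along the short cycle, and then recolor the freed vertex. This uses the $1$-saturation, which forces many of $x_2,x_3,y,y'$ to be $2$-vertices, so the freed vertex has few constraints. If this does not work directly, I would strengthen the induction hypothesis, as was done with ``good'' colorings in Theorem~\ref{theorem 1}. A natural choice is to require that every $2$-vertex adjacent to a $3$-vertex and lying on a cycle of length at most $4$ receives color $1$ whenever its neighbors allow it. This should rule out the residual configuration, and we would then verify that the extension preserves the property. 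Any small graphs where this fails, such as the $\mathcal{G}_i$ built on $C_5$, would be checked by hand.
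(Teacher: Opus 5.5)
Your proposal has two genuine gaps. The first is in the reduction step, exactly where you wrote ``I expect a short check here.'' Suppose $C_x=xx_1yx_2$ is a $4$-cycle and $x_2$ is a $3$-vertex. Then $1$-saturation forces $x_1$ and $y$ to be $2$-vertices, and forces the remaining neighbors of $x$ and $x_2$ to have degree at most $2$. In $G'=G-x_1$ the vertex $x_2$ keeps degree $3$, but its only short cycle may be $C_x$ itself. Concretely, take the $4$-cycle $xx_1yx_2$ and attach one pendant vertex to $x$ and one to $x_2$. This graph is $1$-saturated with $g_3=4$, yet $G-x_1$ is a tree containing the $3$-vertex $x_2$, so $g_3(G')=\infty$. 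Deleting $y$ instead fails symmetrically at $x$, and every $3$-vertex of this graph is in this situation. The hypothesis $g_3\le 4$ cannot be dropped, since $\mathcal{G}_8$ has $g_3=5$ and is not $(1,2,2,2)$-packing colorable, so the induction hypothesis is genuinely unavailable here. The paper handles this case by deleting both $2$-vertices $x_1$ and $y$. Then $x$ and $x_2$ drop to degree $2$, distances are preserved because the path $xx_1yx_2$ is shortcut by the edge $xx_2$, and no remaining $3$-vertex loses its short cycle.

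The second and more serious gap is that the extension step is never carried out, and the residual configuration already occurs in tiny graphs. Take the house, i.e.\ the triangle $xx_1x_2$ sitting on the $4$-cycle $xx_3wx_2$. It is $1$-saturated with $g_3=3$, and $G-x_1=C_4$. The coloring $c(x)=1$, $c(x_2)=2_a$, $c(x_3)=2_b$, $c(w)=2_c$ of $C_4$ does not extend to $x_1$. Moving color $1$ from $x$ to $x_1$ leaves no color for $x$. Swapping the colors of $x$ and $x_2$ still leaves $x_1$ with a neighbor of color $1$ and all three $2$-colors within distance $2$. Your proposed invariant is vacuous on $C_4$, which has no $3$-vertices, so it does not exclude this.

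The missing idea, which is the paper's, is to \emph{reserve} a color. One proves the stronger statement that there is a $(1,2_a,2_b,2_c)$-packing coloring in which $2_c$ appears only on $2$-vertices lying on cycles of length at most $4$. Then the deleted vertex can simply receive $2_c$ when $g(x)=3$, or when $g(x)=4$ with both cycle-neighbors of $x$ of degree $2$. In the configuration of the previous paragraph, one of the adjacent vertices $x,x_2$ is not colored $1$; the deleted vertex next to it gets $1$ and the other gets $2_c$.

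Be aware, though, that this only works if the $2_c$-vertices of $G'$ are guaranteed to be at distance at least $3$ from the new vertex, and the invariant as just stated does not guarantee it. The house coloring above already satisfies it, with $2_c$ on $w$ at distance $2$ from $x_1$. Moreover, $C_5$ and $\mathcal{G}_2$ admit no coloring of that form at all: all three $2$-colors are forced onto their $5$-cycle, whose vertices are $3$-vertices or lie on no cycle of length at most $4$. So the reserved class must be defined more restrictively and its separation checked explicitly, with the exceptional graphs carried through the induction as in Theorem~\ref{theorem 1}.
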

\begin{proof}
A $(1,2,2,2)$-packing coloring using the colors $1,2_a,2_b,2_c$ is said to be good if the color $2_c$ is used only to color some $2$-vertices that belong to a cycle of order at most $4$. We will prove that every $1$-saturated subcubic graph such that $g_3(G)\leq 4$ has a good $(1,2,2,2)$-packing coloring. On the contrary, let $G$ be a counter-example of minimum order. Clearly, $G$ contains a $3$-vertex. Let $x$ be a $3$-vertex. First, suppose that $\text{g}(x)=3$. Let $y$ be a neighbor of $x$ such that $d(y)=2$ and $y$ belongs to a cycle of order $3$. Clearly, $G-y$ is a $1$-saturated subcubic graph such that $g_3(G-y)\leq 4$. Then, $G-y$ has a good $(1,2,2,2)$-packing coloring. Moreover, $y$ is at a distance of at least $3$ from any $2$-vertex that belongs to a cycle of order $3$. Thus, one can color $y$ by $2_c$ in order to obtain a good $(1,2,2,2)$-packing coloring of $G$, a contradiction. Now, assume that $\text{g}(x)=4$. Let $y_1,y_2,y_3$ be the vertices in the cycle of order $4$ that contains $x$ such that $y_1$ and $y_2$ are adjacent to $x$. If $d(y_1)=d(y_2)=2$, then in a similar way as above, one can extend a good $(1,2,2,2)$-packing coloring of $G-y_1$ to $G$ to obtain a contradiction. So, we may assume that $d(y_1)=3$ and $d(y_2)=2$. Clearly, $y_3$ is a $2$-vertex. Let $G'=G-\{y_2,y_3\}$. Again, $G'$ is $1$-saturated such that $g_3(G')\leq 4$. Then, $G'$ has a good $(1,2,2,2)$-packing coloring. Moreover, $y_2$ and $y_3$ are at a distance of at least $3$ from any $2$-vertex other than them that are in a cycle of order of at most $4$. Now, obviously $x$ or $y_1$ is not colored by $1$. Without loss of generality, suppose that $x$ is not colored by $1$. Color $y_2$ by $1$ and $y_3$ by $2_c$ in order to obtain a good $(1,2,2,2)$-packing coloring of $G$, a contradiction.
\end{proof}

\section{$2$-Saturated}
In this section, we prove three results. We start by a lemma about the structure of a $2$-saturated subcubic graph that will be a minimum counter example for the three results if they are supposed to be false. Then, we introduce two easy lemmas that will be useful in the coloring of the graphs in the main results.

\begin{lemma}\label{lemma 2-sat}
 Let $G$ be a connected $2$-saturated subcubic graph such that $g_3(G)=3$, $\delta(G)=2$, $\Delta(G)=3$, $G$ contains no diamond, and for every cycle $C$ in $G$ we have $V(G)\neq N[C]$. Then, $G$ contains a $2$-packing $Y$ such that $G-Y$ is a set of disjoint paths $P_1,\dots ,P_{\alpha}$ of order at least $4$, unless $P_i$ contains a $2$-vertex that belongs to a triangle, satisfying the following for all $x\in P_i$ and $y\in P_j$, $i\neq j$:\begin{itemize}
     \item[(i)] $dist_G(x,y)\geq 3$ whenever $x$ and $y$ are interior vertices,
     \item[(ii)] $dist_G(x,y)\geq 3$ whenever $x$ and $y$ are ends.
 \end{itemize}
\end{lemma}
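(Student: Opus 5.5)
Our plan is to build $Y$ directly from the triangles, since the structure of $G$ is forced once every $3$-vertex lies on a triangle. Because $g_3(G)=3$ and $G$ has no diamond, each $3$-vertex lies in exactly one triangle. Two triangles sharing an edge would form a diamond, and two triangles sharing a single vertex would give that vertex degree at least $4$. So the triangles containing $3$-vertices are pairwise vertex-disjoint.

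Since $G$ is $2$-saturated, a triangle $T$ cannot consist of three $3$-vertices all having outside neighbours of degree $3$. We will classify the triangles according to how many of their vertices have degree $2$. A triangle with two or three $2$-vertices is a pendant or isolated piece. A triangle with exactly one $2$-vertex $t$ has its other two vertices $a,b$ of degree $3$, each with one outside edge. A triangle with no $2$-vertex has all three vertices of degree $3$, and 2-saturation forces each of them to have an outside neighbour of degree $2$.

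The construction of $Y$ goes as follows. From each triangle we put one vertex into $Y$. We choose the $2$-vertex $t$ when it exists; otherwise we choose a vertex whose outside neighbour has degree $2$. Deleting this vertex destroys the triangle and leaves its other two vertices as consecutive vertices of degree at most $2$. Every $3$-vertex is on a triangle and each triangle loses one vertex, so $G-Y$ has maximum degree $2$.

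Several properties then need checking.
\begin{itemize}
\item \emph{$Y$ is a $2$-packing.} Two chosen vertices from distinct disjoint triangles must be at distance at least $3$. Where they are too close (a shared neighbour), we re-choose within one of the triangles.
\item \emph{No cycles survive.} We must ensure that $G-Y$ contains no cycle. A surviving cycle either passes through no triangle, which contradicts $g_3=3$ only if it contains a $3$-vertex, so it may be a cycle of $2$-vertices. Alternatively, it uses several triangles. Here the hypothesis $V(G)\neq N[C]$ is exactly what should be used: if a long cycle remains, we show it must dominate everything, or we add a further vertex to $Y$ to break it. For extra cycles made of $2$-vertices only, $G$ being connected with $\Delta=3$ excludes them.
\item \emph{Distance conditions (i) and (ii).} Two distinct paths arise only by cutting at $Y$-vertices, so interior vertices of different paths are separated by a vertex of $Y$ or by a triangle remnant. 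Distance $2$ between interior vertices of different paths would require a common neighbour outside $Y$ of degree $3$, and this would lie on its own triangle. Ends are adjacent to $Y$-vertices; two ends at distance at most $2$ would again create a short cycle through a triangle, giving a diamond or two $Y$-vertices within distance $2$.
\item \emph{Path orders.} We must show the paths have order at least $4$, except those containing a triangle $2$-vertex. A short path (order at most $3$) would be squeezed between triangles, and we argue this forces either a triangle with a $2$-vertex or a violation of 2-saturation.
\end{itemize}

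The main obstacle is the global acyclicity together with the $2$-packing property. Local choices inside each triangle interact, and cycles of $G-Y$ may thread through many triangles. We would first try a maximal-choice argument: among all selections, pick $Y$ maximizing the number of triangles hit with pairwise distance at least $3$, then minimize the number of cycles in $G-Y$. We would then show that any remaining cycle $C$ can be broken by moving a selection, unless $V(G)=N[C]$, which is excluded by hypothesis.
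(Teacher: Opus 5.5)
\textbf{The path-order step is wrong as stated.} You claim that a path of order at most $3$ in $G-Y$ would force a triangle $2$-vertex or violate $2$-saturation. This is false, and your selection rules do not exclude such paths. Take two triangles $x_1y_1z_1$ and $x_2y_2z_2$ of $3$-vertices joined by a thread $x_1pqx_2$ with $d(p)=d(q)=2$. To each of $y_1,z_1,y_2,z_2$ attach a pendant $v\,a\,c$, where $c$ lies on a triangle $ctt'$ and $d(a)=d(t)=d(t')=2$. This graph satisfies every hypothesis of the lemma: its only cycles are triangles, and none of them dominates $V(G)$. Your rules allow $Y=\{x_1,x_2\}$ together with one $t$ from each pendant triangle. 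Indeed, $p$ and $q$ are $2$-vertices, $dist_G(x_1,x_2)=3$, so $Y$ is a $2$-packing, and $G-Y$ is acyclic. Yet $pq$ is a component of $G-Y$ of order $2$ containing no triangle vertex, and $2$-saturation is not violated.

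So in triangles of three $3$-vertices the selection must avoid both ends of every thread with one, two or three internal $2$-vertices. One internal vertex is your $2$-packing conflict; two or three produce paths of order $2$ or $3$. Each vertex has at most one such partner, so the conflicts form a matching between triangles that each offer three candidates. A conflict-free transversal therefore exists, for instance by an alternating-path argument on two candidates per triangle, or by a BFS tree as in the paper. With such a transversal, every path containing no triangle $2$-vertex has order at least $4$. A path of $2$-vertices only is the interior of a thread between two selected vertices. A path through a $3$-vertex contains that vertex, its surviving triangle-mate, and both of their (unselected) outside neighbours.

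\textbf{Acyclicity is only announced.} Adding a further vertex to $Y$ is unspecified and is not what works. The missing argument runs as follows. A cycle $C$ of $G-Y$ is chordless of length at least $4$, and every triangle meeting $C$ is crossed along one edge, with its selected vertex off $C$. If all these selected vertices were $2$-vertices, then $N[C]$ would be closed under taking neighbours, so $V(G)=N[C]$, which is excluded. Hence one of them is a $3$-vertex $x$ of a triangle $xyz$ with $y,z\in V(C)$. Move the selection from $x$ to $y$. The vertices within distance $2$ of $y$ are $x$, $x'$ (the outside neighbour of $x$, an unselected $2$-vertex) and vertices of $C$. The thread leaving $y$ runs along $C$, so no new conflict arises. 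The component now containing $x$ is a path ending at the outside neighbour of $y$, so the number of cycles strictly drops. Minimizing the number of cycles over conflict-free transversals thus yields an acyclic $G-Y$.

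Your distance bullet also needs a correction: a common neighbour of two vertices in different paths must lie in $Y$, not outside it. A selected $2$-vertex has two adjacent neighbours. A selected $x$ has neighbours $y,z$, which are adjacent, and $x'$, which is an end. This gives (i) and (ii) at once. With these repairs your triangle-by-triangle route is a valid alternative to the paper's. The paper instead peels off chordless cycles of length at least $4$, putting into $Y$ the attached $2$-vertices and, for each attached $3$-vertex, one of its cycle neighbours chosen in cyclic order. It then handles the remaining triangles with a BFS tree.
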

\begin{proof}
We will study the structure of $G$.

\begin{claim}\label{claim 2-sat 2}
Let $C$ be a cycle and $xy$ be a chord in $C$. Then, $dist_C(x,y)=2$.
\end{claim}
\begin{proof}
    As $xy$ is a chord in $C$, $dist_C(x,y)\geq 2$. Suppose to the contrary that $dist_C(x,y)\geq 3$. Let $x_1,x_2$ be the neighbors of $x$ in $C$. Hence, $x_1x_2\in E(G)$. Thus, the three neighbors of $x$ are $3$-vertices, a contradiction.
\end{proof}\\
\\
A cycle is said to be good if its order is at least $4$ and it contains no chords.
\begin{claim}\label{claim 2-sat 3}
Let $C$ be a good cycle and $x\in V(G)\setminus V(C)$ such that $N(x)\cap N(C)\neq \emptyset$. Then, $|N(x)\cap V(C)|=2$ and the two neighbors of $x$ in $C$ are adjacent in $C$.
\end{claim}
\begin{proof}
    Let $y\in N(x)\cap V(C)$ and let $y_1,y_2$ be the neighbors of $y$ in $C$. As $C$ is good, $y_1y_2\notin E(G)$. Then, either $x$ is adjacent to $y_1$ or $y_2$. As $G$ is $2$-saturated, $x$ cannot be adjacent to both $y_1$ and $y_2$. The result follows.
\end{proof}\\
\\
If $C$ is a good cycle and $x\in V(G)\setminus V(C)$ such that $N(x)\cap V(C)\neq \emptyset$, then $x$ is said to be $C$-attached. 

\begin{claim}\label{claim 2-sat 4}
Let $C$ be a good cycle and $x,y$ be $C$-attached vertices. Denote by $x_1,x_2$ (resp., $y_1,y_2$) the neighbors of $x$ (resp., $y$) that belong to $C$ such that $x_1,x_2,y_1,y_2$ are in this order in $C$ (i.e., $x_1x_2v_1\dots v_ky_1y_2$ is a path in $C$, for some vertices $v_1,\dots ,v_k$). Then,\begin{itemize}
    \item[(i)] If $x$ is a $3$-vertex, then $dist_G(x_1,y_1)\geq 3$.
    \item[(ii)] If $x$ and $y$ are $2$-vertices, then $dist_G(x,y)\geq 3$.
\end{itemize}
\end{claim}
\begin{proof}
    Note that $x_1,x_2,y_1,y_2$ are $3$-vertices. Let $x'_1$ and $x'_2$ be the third neighbors of $x_1$ and $x_2$, respectively. First, suppose that $x$ is a $3$-vertex. Here, $x'_1$ and $x'_2$ are $2$-vertices. Then, $x'_1$ and $x'_2$ are distinct from $y_1$ and $y_2$. Moreover, $x$ is not adjacent to $y_1$. Thus, $x_1$ and $y_1$ are neither adjacent nor have a common neighbor. Therefore, $dist_G(x_1,y_1)\geq 3$. Now, assume that $x$ and $y$ are $2$-vertices. Clearly, $x_1$ and $x_2$ are different from $y_1$ and $y_2$. Thus, $x$ and $y$ are neither adjacent nor have a common neighbor. Therefore, $dist_G(x,y)\geq 3$.
\end{proof}

\begin{claim}\label{claim 2-sat 5}
Let $C$ be a good cycle and $x$ be a $C$-attached $3$-vertex. Denote by $x_1,x_2,x_3$ the neighbors of $x$ such that $x_1,x_2\in V(C)$. Let $a,b,c$ be three vertices that form a triangle in $G-V(C)$. Then, $dist_G(x_1,a)\geq 3$.
\end{claim}
\begin{proof}
    Recall that $x_3$ is a $2$-vertex. Then, $x_3\neq a$. Similarly, the neighbor of $x_1$ in $C$ other than $x_2$ is a $2$-vertex. Thus, $x_1$ and $a$ have no common neighbors. The result follows.
\end{proof}

\begin{claim}
If $G$ contains a cycle of order at least $4$, then $G$ contains a good cycle.
\end{claim}
\begin{proof}
    Let $C$ be a cycle of order at least $4$. If $C$ has no chords, then we are done. Otherwise, let $xy$ be a chord in $C$. By Claim \ref{claim 2-sat 2}, $dist_C(x,y)=2$. As $G$ contains no diamond, the order of $C$ is at least $5$. Assume that $C=v_1v_2\dots v_nv_1$ such that $v_1v_3$ is a chord. Let $v_{i_1}v_{i_1+2},\dots , v_{i_r}v_{i_r+2}$ be the other chords in $C$ such that $i_1<\dots <i_r$, if they exist. Then, the cycle $v_1v_3\dots v_{i_1}v_{i_1+2}\dots v_{i_r}v_{i_r+2}\dots v_1$ is a good cycle in $G$.
\end{proof}\\
\\
Let $C_1$ be a good cycle in $G$ and $G_1=G-V(C_1)$. Recursively, for $i\geq 1$, define $C_i$ as a good cycle in $G_{i-1}$ and $G_i$ to be $G_{i-1}-V(C_i)$. This definition ends at some $p\geq 2$, where $G_p$ does not contain a good cycle. Note that if $G$ contains no good cycle, then $G$ contains no cycles of order at least $4$. In this case, we may assume that $G_p=G$.

If there exists $i\in \{1,\dots ,p\}$ such that every $C_i$-attached vertex is a $2$-vertex, then $V(G)=N[C_i]$, a contradiction. So, we may assume that, for every $i\in \{1,\dots ,p\}$, there exists a $C_i$-attached $3$-vertex. Let $i\in \{1,\dots ,p\}$. Let $A_i$ be the set of $C_i$-attached $2$-vertices. Let $x^i_1,\dots ,x^i_{k_i}$ be the  $C_i$-attached $3$-vertices. Denote by $y_j,z_j$ the neighbors of $x^i_j$ that belong to $C_i$, for every $j\in \{1,\dots ,k_i\}$, and suppose that $y_1,z_1,y_2,z_2,\dots ,y_{k_i},z_{k_i}$ appear in this order on $C_i$. Let $B_i=\{y_1,\dots ,y_{k_i}\}$ and $X=\bigcup_{1\leq i \leq p} (A_i\cup B_i))$. By Claim \ref{claim 2-sat 4} and Claim \ref{claim 2-sat 5}, $X$ is a $2$-packing. Note that if $G$ contains no good cycles, $X$ is an empty set that is supposed a $2$-packing also.

As $G_p$ contains no good cycle, every cycle in $G_p$ is a triangle. Let $D_1$ be the set formed by exactly one $2$-vertex from every triangle in $G_p$ that contains a $2$-vertex. Clearly, $X\cup D_1$ is a $2$-packing. Now, let $G_p^1$ be the graph obtained by removing the vertices of every triangle containing a $2$-vertex from $G_p$. Note that every cycle in $G_p^1$ is a triangle formed by three $3$-vertices. Moreover, every neighbor of a vertex in a cycle in $G_p^1$, outside the cycle, is a $2$-vertex. Now, we will pick a $3$-vertex from every cycle in $G_p^1$ in order to form a set $D_2$. The set $Y:=X\cup D_1\cup D_2$ will be a $2$-packing that we are aiming for.

\begin{claim}
The set $D_2$ exists such that $Y$ is a $2$-packing.
\end{claim}
\begin{proof}
    A $t$-vertex is a $3$-vertex that lies in a triangle in $G_p^1$. Let $x$ be a $t$-vertex. Consider the BFS tree $T$ of $x$. Let $x_1,x_2,x_3$ be the neighbors of $x$ such that $x_1$ and $x_2$ are adjacent. Let $D_2=\{x\}$ for the moment, then we will add vertices to $D_2$. A $t$-vertex is said to be good if its father in $T$ is a $2$-vertex. Otherwise, it is said to be bad. Note that every triangle in $G_p^1$ is formed by a good $t$-vertex and two bad $t$-vertices. Add to $D_2$ every good $t$-vertex that belongs to the branch of $T$ of root $x_1$ (resp., $x_2$). Note that the distance between any two vertices in $D_2$ is at least $3$ in $T$. Moreover, as $G_p^1$ contains no cycle of order at least $4$, the distance between every vertex in $D_2$ is also at least $3$ in $G$. Now, add to $D_2$ a bad $t$-vertex among every pair of adjacent bad $t$-vertices in the branch of $T$ of root $x_3$. It is easy to check that $D_2$ is a $2$-packing. Moreover, every vertex in $D_2$ is at a distance of at least $3$ from every vertex in $X\cup D_1$. Note that if $T$ does not contain every $t$-vertex in $G_p^1$, we can consider several BFS trees in the same way.
\end{proof}\\
\\
By the definition of $Y$, $G-Y$ is a set of disjoint paths $P_1,\dots ,P_{\alpha}$.

\begin{claim}
    The path $P_i$ is of order at least $4$ for every $i\in \{1,\dots , \alpha\}$ unless $P_i$ contains a $2$-vertex that belongs to a triangle..
\end{claim}
\begin{proof}
Let $i\in \{1,\dots ,\alpha\}$. There exists $x\in P_i$ such that $x$ has a neighbor $y\in Y$. Indeed, otherwise, $\Delta(G)\leq 2$. Let $z$ be the other neighbor of $y$ that belongs to $P_i$. If $y$ is a $3$-vertex, then $x$ (resp., $z$) is adjacent to a $2$-vertex $x_i$ (resp., $z_i$) that belongs to $P_i$. Then, $P_i$ is of order at least $4$. Otherwise, $y$ is a $2$-vertex. Without loss of generality, we may suppose that $x$ is a $3$-vertex. Let $x_1$ be the neighbor of $x$ other than $y$ and $z$. If $x_1$ is a $3$-vertex, then $x_1$ is contained in a triangle that one of its vertices is a vertex in $Y$ and the last vertex belongs to $P_i$. Thus, $P_i$ is of order at least $4$. If $x_1$ is a $2$-vertex, then the neighbor of $x_1$ other than $x$ belongs to $P_i$. Hence, $P_i$ is of order at least $4$ unless $z$ is a $2$-vertex. Finally, if $x_1$ is a $1$-vertex, then, as $V(G)\neq N[xyz]$, $z$ is a $3$-vertex and has a neighbor that belongs to $P_i$. The result follows.
\end{proof}

Now, if $x\in Y$ is a $3$-vertex, then $x$ is contained in a triangle of $3$-vertices. Moreover, if $x\in Y$ is a $2$-vertex, then $x$ is contained in a triangle. Now, let $x\in P_i$ and $y\in P_j$, $i\neq j$, be two interior vertices. Suppose to the contrary that $x$ and $y$ have a common neighbor $z$. Clearly, $z\in Y$. If $z$ is a $2$-vertex, then $x$ and $y$ are adjacent, a contradiction. So, $z$ is a $3$-vertex. Thus, $z$ is contained in a triangle of $3$-vertices. Without loss of generality, suppose that $y$ is contained in that triangle and let $u$ be the third vertex of the triangle. As $u\neq x$ and $G$ is $2$-saturated, $x$ is a $2$-vertex. Then, $x$ is an end of $P_i$, a contradiction. Therefore, $dist_G(x,y)\geq 3$.

Finally, let $x\in P_i$ and $y\in P_j$, $i\neq j$, be two ends. Suppose to the contrary that $x$ and $y$ have a common neighbor $z$. Again, $z\in Y$. If $z$ is a $2$-vertex, then $x$ and $y$ are adjacent, a contradiction. So, $z$ is a $3$-vertex. Thus, $z$ is contained in a triangle of $3$-vertices. Without loss of generality, suppose that $y$ is contained in that triangle and let $u,v$ be other neighbors of $y$. As $Y$ is a $2$-packing, $u$ and $v$ are not in $Y$. Then, $u,v\in P_j$ and $y$ is not an end of $P_j$, a contradiction. Therefore, $dist_G(x,y)\geq 3$.
\end{proof}\\
\\
The following two lemmas are trivial.

\begin{lemma}\label{lemma paths (1,2,2)}
Let $P=v_1\dots v_n$ be a path. If $n\geq 3$, then $P$ has a $(1,2,2)$-packing coloring such that $v_1$ and $v_n$ are colored by $1$.
\end{lemma}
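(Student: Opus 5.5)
We construct the coloring explicitly by splitting on the residue of $n$ modulo $3$, using the colors $1,2_a,2_b$. Two facts drive the argument. On a path, two vertices colored $1$ only need to be non-adjacent. Two vertices with the same color $2_a$ (or $2_b$) need distance at least $3$, i.e.\ at least two vertices between them.

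\textbf{Case $n\equiv 0 \pmod 3$.} We use the block pattern $1\,2_a\,2_b$ repeated, but we need the path to end with $1$. So we start instead with $1\,2_a\,1$ and continue with repetitions of $2_b\,2_a\,1$ interleaved appropriately. A cleaner choice is to write $n=3m$ and use
\[
1\,2_a\,1\,(2_b\,2_a\,1)^{m-1}\ \text{(adjusted)}.
\]
To avoid bookkeeping, we instead pick separate patterns by residue.
\begin{itemize}
\item[(1)] For $n=2t+1$ odd, color $v_1,v_3,\dots ,v_n$ with $1$ and the even-indexed vertices alternately $2_a,2_b,2_a,\dots$. Consecutive even vertices carrying the same $2$-color are then at distance $4$, so this works directly for every odd $n\ge 3$.
\item[(2)] For $n$ even, $n\ge 4$, we start with the block $1\,2_a\,2_b\,1$ and then continue the odd-style alternation $2_a\,1\,2_b\,1\,2_a\,1\,\dots$ up to $v_n$. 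The remaining path $v_4\dots v_n$ has odd order and starts with $1$, so by (1) it ends with $1$. We must recolor (1)'s $2$-colors so that the first $2$-vertex after $v_4$ is $2_a$, since $v_3$ has $2_b$. Then $v_5$ colored $2_a$ lies at distance $3$ from $v_2$ colored $2_a$, which is allowed, and $v_3$ colored $2_b$ lies at distance $4$ from $v_7$ colored $2_b$. For $n=4$ the block $1\,2_a\,2_b\,1$ alone suffices.
\end{itemize}

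The only point to check is the junction in the even case: the $2$-colored vertices $v_2,v_3$ must be at distance at least $3$ from the next vertices of the same color. This holds as computed above. Both ends receive $1$, and no two $1$'s are adjacent, since in (2) $v_4$ is preceded by $v_3$, which is colored $2_b$. This completes the construction, with only routine verification remaining.
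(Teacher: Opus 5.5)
Your construction is correct. For odd $n$, the pattern $1\,2_a\,1\,2_b\,1\,2_a\cdots 1$ places equal $2$-colors at distance $4$. For even $n$, the prefix $1\,2_a\,2_b$ followed by the odd pattern on $v_4\dots v_n$ places $2_a$ at $v_2,v_5$ (distance $3$) and $2_b$ at $v_3,v_7$ (distance $4$). In both cases all $1$'s are non-adjacent and both ends receive $1$.

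The paper gives no proof of this lemma (it calls it trivial), and your parity split is exactly the routine check intended. You should delete the abandoned opening about splitting on residues modulo $3$ and the ``adjusted'' block pattern, which play no role and contradict the parity-based argument you actually give.
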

\begin{lemma}\label{lemma paths (2,2,2,2)}
Let $P=v_1\dots v_n$ be a path. If $n\geq 4$, then $P$ has a $(2,2,2,2)$-packing coloring such that $v_1$ and $v_n$ are colored by the same color and neither of $v_2,\dots ,v_{n-1}$ is colored by that color.
\end{lemma}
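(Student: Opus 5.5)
The statement is Lemma \ref{lemma paths (2,2,2,2)}: a path $P=v_1\dots v_n$ with $n\geq 4$ has a $(2,2,2,2)$-packing coloring in which $v_1$ and $v_n$ get the same color and no interior vertex gets that color. I would build the coloring explicitly, using the colors $2_a,2_b,2_c,2_d$. In a path, two vertices $v_i,v_j$ are at distance $|i-j|$, so the packing condition says only that equal colors must be at least $3$ apart in index. The plan is to give $v_1$ and $v_n$ the color $2_a$, which is legal since $n-1\geq 3$. Then I color the interior subpath $v_2\dots v_{n-1}$ using only $2_b,2_c,2_d$, so that $2_a$ appears nowhere inside.

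For the interior, let $m=n-2\geq 2$ be the number of interior vertices. I would color them with the periodic sequence $2_b2_c2_d2_b2_c2_d\dots$, truncated after $m$ terms. In a periodic sequence of period $3$ on three colors, any two equal colors are at index distance a multiple of $3$, hence at least $3$. This is exactly the required condition.

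It remains to check that $2_a$ on the ends causes no conflicts. Its only other occurrences are the two endpoints themselves, at distance $n-1\geq 3$. The interior colors never involve $2_a$, so no further check is needed.

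I do not expect a real obstacle here. The only thing to handle with care is the small case $n=4$, where the endpoints are at distance exactly $3$. That is still strictly greater than $2$, so the argument goes through unchanged, and the hypothesis $n\geq 4$ is precisely what makes the endpoint pair legal.
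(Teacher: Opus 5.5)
Your proof is correct: the ends colored $2_a$ are at distance $n-1\geq 3$, and the period-$3$ pattern $2_b2_c2_d$ on the interior keeps equal colors at distance at least $3$. The paper gives no proof at all (it declares this lemma trivial), and your explicit construction is exactly the evident argument it leaves to the reader.
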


\begin{theorem}
Every $2$-saturated subcubic graph such that $g_3(G)=3$ is $(1,1,2)$-packing colorable.
\end{theorem}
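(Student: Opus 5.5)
We argue by minimal counterexample and use the structural Lemma~\ref{lemma 2-sat}. Let $G$ be a $2$-saturated subcubic graph with $g_3(G)=3$ that is not $(1,1,2)$-packing colorable, chosen with the minimum number of vertices. We use the colors $1_a,1_b,2$. Clearly $G$ is connected. If $\Delta(G)\le 2$, Lemma~\ref{lemma 1}(i) applies. If $\delta(G)=1$, then deleting a leaf $v$ keeps the graph $2$-saturated. It may, however, increase $g_3$ only in a harmless way: the former neighbor becomes a $2$-vertex, so the $3$-vertices of $G-v$ still lie on triangles. A coloring of $G-v$ then extends, since $v$ has a single neighbor and one of $1_a,1_b$ differs from that neighbor's color. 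Since $g_3(G)=3$ and $G$ is not cubic ($2$-saturation with a $3$-vertex on a triangle forces $2$-vertices), we may assume $\delta(G)=2$ and $\Delta(G)=3$.

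Next we remove the remaining obstacles to the hypotheses of Lemma~\ref{lemma 2-sat}.

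\emph{No diamond.} In a diamond, each degree-$3$ vertex of the diamond is adjacent to two other $3$-vertices. Hence its third neighbor is a $2$-vertex, and the two tips have degree at most $3$. The diamond therefore attaches to the rest of $G$ through at most two edges leaving the tips. We delete the diamond (or contract it to a short path), color the smaller graph by minimality, and extend. The extension colors the two $3$-vertices $1_a,1_b$ and gives the tips suitable colors from $\{1_a,1_b,2\}$, choosing among the few possibilities according to the colors of the outside neighbors.

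\emph{No cycle $C$ with $V(G)=N[C]$.} Such graphs are a cycle together with pendant or chord-like attachments. We color them directly: alternate $1_a,1_b$ around $C$ and put color $2$ on at most one vertex to fix odd parity. The attached vertices are colored from $1_a/1_b$, since each sees few colors.

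After these reductions, Lemma~\ref{lemma 2-sat} gives a $2$-packing $Y$ such that $G-Y$ is a disjoint union of paths $P_1,\dots,P_\alpha$ with properties (i) and (ii). We color every vertex of $Y$ with $2$, which is legal because $Y$ is a $2$-packing. It remains to $(1,1)$-color each path $P_i$, that is, to properly $2$-color it with $1_a,1_b$. Every path is bipartite, so each $P_i$ is properly $2$-colored, and we need only avoid conflicts between different paths. Such a conflict is an edge of $G$ between $P_i$ and $P_j$ with $i\neq j$. Property (i)/(ii) does not forbid edges between an end and an interior vertex. We handle this by choosing the direction of each alternating coloring: each path admits two proper $2$-colorings, so we build an auxiliary graph on the paths whose edges are the inter-path adjacencies. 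We then show it can be consistently oriented, for instance by treating it as a $2$-coloring problem on a forest or on an even structure. Alternatively, and more robustly, we add the endpoints of such edges to $Y$ when the $2$-packing condition permits.

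\textbf{Main obstacle.} I expect the main difficulty to be this consistency step: cross edges between paths forming odd cycles in the auxiliary structure. This is where triangles of $3$-vertices and paths containing a triangle $2$-vertex (the exception in Lemma~\ref{lemma 2-sat}) must be examined. I would first show that any edge leaving $P_i$ must come from a triangle through a vertex of $Y$. In that triangle, the two non-$Y$ vertices are adjacent and, as the claims in the lemma indicate, lie in the same path. Hence inter-path edges never occur, and each $P_i$ can be colored independently.
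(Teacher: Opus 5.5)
Your outline matches the paper's own: a minimal counterexample, reduction to $\delta(G)=2$, no diamond, and no cycle $C$ with $V(G)=N[C]$, then Lemma~\ref{lemma 2-sat}, with color $2$ on $Y$ and $1_a,1_b$ on the paths. The last step needs nothing more, and the obstacle you anticipate does not exist. The paths $P_1,\dots,P_\alpha$ are the components of the induced subgraph $G-Y$, so no edge joins two of them, and every edge leaving $P_i$ ends in $Y$, which has color $2$. Hence $G-Y$ is a linear forest, so bipartite, and the auxiliary-graph orientation and the idea of enlarging $Y$ are unnecessary.

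The real gaps are in the two reductions, where the colorings you describe cannot work. \emph{Diamond.} Take a diamond with chord $x_1x_3$. Both tips $x_2,x_4$ are adjacent to both $x_1$ and $x_3$, so coloring $x_1,x_3$ by $1_a,1_b$ forces both tips to color $2$, although $dist_G(x_2,x_4)=2$. One chord endpoint must get $2$ and both (non-adjacent) tips the same $1$-color. Moreover, $2$-saturation at $x_1$ lets only one tip, say $x_2$, have an outside neighbor $y$, and $y$ is a $2$-vertex. The triangle $x_1x_2x_3$ must contain a vertex of color $2$, and that vertex is within distance $2$ of $y$. So a coloring of $G$ minus the diamond that gives $y$ color $2$ cannot be extended. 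This is why the paper deletes $y$ as well and uses $y,x_1,x_2,x_3,x_4\mapsto 1_a,1_a,1_b,2,1_b$, where $1_a$ is not the color of $y$'s other neighbor.

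\emph{The case $V(G)=N[C]$.} Here the attached vertices cannot in general be colored from $\{1_a,1_b\}$. A $3$-vertex $v$ of $C$ whose third neighbor $w$ lies off $C$ must lie on a triangle. If $C$ is chordless of length at least $4$, that triangle is $vv^{+}w$ or $vv^{-}w$. Then $w$ is adjacent to two consecutive vertices of $C$ and, under an alternating coloring, sees both $1_a$ and $1_b$; the house graph with $C$ its $4$-cycle already shows this. These vertices must receive color $2$. You then have to check that they are pairwise at distance at least $3$ and compatible with any $2$ used on $C$ to fix parity. The paper does exactly this. For even $C$ it colors $C$ by $1_a,1_b$ and $N(C)$ by $2$. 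For odd $C$ it chooses consecutive $3$-vertices $v_1,v_2$ with common attached neighbor $x_1$ and $v_3$ of degree $2$. It then colors $v_2,x_2,\dots,x_m$ by $2$ and $2$-colors the path that remains.
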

\begin{proof}
On the contrary, let $G$ be a counter-example of minimum order. Obviously, $G$ is connected and contains a $3$-vertex.

\begin{claim}\label{claim 2-sat 1}
We have $\delta(G)=2$.
\end{claim}
\begin{proof}
    Suppose to the contrary that there exists a vertex $u$ in $G$ such that $d(u)= 1$. The graph $G-u$ is $2$-saturated subcubic such that $g_3(G-u)=3$. Then, $G-u$ is $(1,1,2)$-packing colorable. Hence, either $1_a$ or $1_b$ is not assigned to the neighbor of $u$. By coloring $u$ using the appropriate color $1_a$ or $1_b$ we obtain a $(1,1,2)$-packing coloring of $G$, a contradiction.
\end{proof}

\begin{claim}
$G$ contains no diamond.
\end{claim}
\begin{proof}
    Suppose to the contrary that $G$ contains a diamond and let $x_1,x_2,x_3,x_4$ be the vertices of a diamond such that $x_1x_3$ is a chord. As $G$ is connected and $x_1$ and $x_3$ are $3$-vertices, either $x_2$ or $x_4$ has a neighbor outside the diamond. Without loss of generality, suppose that $x_2$ is adjacent to a vertex $y$ outside the diamond. As $G$ is $2$-saturated, $x_4$ and $y$ are $2$-vertices. consider a $(1,1,2)$-packing coloring of $G-\{x_1,x_2,x_3,x_4,y\}$. Clearly, $y$ has one colored neighbor. Then, either $1_a$ or $1_b$ is not assigned to its colored neighbor, say $1_a$. Color $y,x_1,x_2,x_3,x_4$ by $1_a,1_a,1_b,2,1_b$, respectively. We obtain a $(1,1,2)$-packing coloring of $G$, a contradiction.
\end{proof}

\begin{claim}
If $C$ is a cycle, then $V(G)\neq N[C]$.
\end{claim}
\begin{proof}
    Suppose to the contrary that there exists a cycle $C$ such that $V(G)=N[C]$. If $C$ is an even cycle, then we can color the vertices of $C$ by $1_a$ and $1_b$. The vertices in $N(C)$ are at a distance of at least $3$ from each other, then we can color them by $2$ to obtain a $(1,1,2)$-packing coloring, a contradiction. Now, assume that $C$ is an odd cycle. Then, $C$ contains a $2$-vertex. Let $C=v_1v_2\dots v_k$ such that $v_1$ and $v_2$ are $3$-vertices and $v_3$ is a $2$-vertex. Let $x_1,\dots ,x_m$ be the vertices in $N(C)$ such that $x_1$ is adjacent to $v_1$ and $v_2$. Clearly, $v_2$ is at a distance of at least $3$ from any vertex among $x_2,\dots ,x_m$. Color the vertices $v_2,x_2,\dots ,x_m$ by $2$. The remaining vertices induce a path. Then, we can color the remaining vertices by $1_a$ and $1_b$. We obtain a $(1,1,2)$-packing coloring of $G$, a contradiction.
\end{proof}\\
\\
Now, by Lemma \ref{lemma 2-sat}, $G$ contains a $2$-packing $Y$ such that $G-Y$ is a set of disjoint paths $P_1,\dots ,P_{\alpha}$. Color the vertices of $P_1,\dots ,P_{\alpha}$ by $1_a$ and $1_b$ to obtain a $(1,1,2)$-packing coloring of $G$, a contradiction.
\end{proof}

\begin{theorem}\label{2-sat (1,2,2,2)}
Every $2$-saturated subcubic graph such that $g_3(G)=3$ is $(1,2,2,2)$-packing colorable.
\end{theorem}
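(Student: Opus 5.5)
We argue by minimal counterexample, following the proof of the $(1,1,2)$ result. Let $G$ be a $2$-saturated subcubic graph with $g_3(G)=3$ that is not $(1,2,2,2)$-packing colorable and has minimum order; write the colors as $1,2_a,2_b,2_c$. Clearly $G$ is connected and contains a $3$-vertex. We first establish the three structural hypotheses of Lemma~\ref{lemma 2-sat}, and then use the $2$-packing $Y$ it provides.

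\textbf{Step 1: $\delta(G)=2$.} Suppose $u$ is a $1$-vertex. We color $G-u$ by minimality. The neighbor $w$ of $u$ has at most two other neighbors, so at most one color $2$ is used on $N(w)\setminus\{u\}$. If $w$ is not colored $1$, we give $u$ color $1$. Otherwise a short count is needed: the three $2$-colors must not all be used within distance $2$ of $u$. If they are, we recolor locally, for instance by swapping $1$ onto $u$ and recoloring $w$. This is routine.

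\textbf{Step 2: no diamond.} Suppose $x_1x_2x_3x_4$ is a diamond with chord $x_1x_3$, and let $y$ be the outside neighbor of $x_2$. Then $x_4$ and $y$ are $2$-vertices. We color $G-\{x_1,\dots,x_4,y\}$ and extend. We give $x_1,x_3$ two $2$-colors, which is possible since they are at distance at least $3$ from all colored vertices except through $x_2$ and $x_4$. We give $x_2,x_4$ color $1$. We then color $y$ with $1$ if possible, and otherwise with the third $2$-color, checking distances through its single colored neighbor.

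\textbf{Step 3: no cycle $C$ with $V(G)=N[C]$.} If $C$ is such a cycle, the vertices of $N(C)$ are pairwise at distance at least $3$ (as in the previous proof), so they all receive $2_c$. We color $C$ with $1,2_a,2_b$ by Lemma~\ref{lemma 1}(ii), unless $C=C_5$, which is a small finite check. A conflict can arise when a vertex of $N(C)$ and a vertex of $C$ both carry $2_c$; since we use only $1,2_a,2_b$ on $C$, this cannot happen.

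\textbf{Step 4: coloring via $Y$.} Lemma~\ref{lemma 2-sat} now gives a $2$-packing $Y$ such that $G-Y$ is a disjoint union of paths $P_1,\dots,P_\alpha$, each of order at least $4$ except possibly paths containing a $2$-vertex of a triangle, with the distance properties (i) and (ii). We color $Y$ by $2_c$. We color each path by Lemma~\ref{lemma paths (1,2,2)} with colors $1,2_a,2_b$ so that both ends get $1$. Same-color vertices on different paths are then safe:
\begin{itemize}
\item color $1$ only needs nonadjacency, and two paths are joined only through $Y$;
\item a color $2_a$ or $2_b$ lies on an interior vertex, and interior vertices of distinct paths are at distance at least $3$ by (i);
\item within a single path, distances in $G$ could shrink through a vertex of $Y$ adjacent to two vertices of the same path, so we must check that the coloring along the path still respects distance $3$ in $G$.
\end{itemize}

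\textbf{Main obstacle.} The delicate part is the third item above, together with the short exceptional paths of order below $4$, or of order $3$, where Lemma~\ref{lemma paths (1,2,2)} forces the pattern $1\,2\,1$. For the first issue, we would choose the phase of the periodic pattern $1\,2_a\,1\,2_b$ along each path so that the two path-neighbors of any $y\in Y$ do not both receive the same $2$-color. For the short paths, we would use that they contain a triangle $2$-vertex and, if needed, recolor that vertex or its triangle partner with $2_c$ instead.
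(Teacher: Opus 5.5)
Your route is the paper's: a minimal counterexample, the reductions ($\delta\ge 2$, no diamond, no cycle $C$ with $V(G)=N[C]$), and then Lemma~\ref{lemma 2-sat}, with the paths colored by Lemma~\ref{lemma paths (1,2,2)} (ends colored $1$) and $Y$ colored $2_c$. The step that actually fails is Step~2.

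Once $x_2$ gets color $1$, its neighbor $y$ can never get $1$. Since $y$ is at distance $2$ from $x_1$ and $x_3$, it needs the third $2$-color. That color must also differ from the color of its colored neighbor $z$ and from the colors of the (up to two) other neighbors of $z$. Suppose $z$ has color $2_a$ and its other two neighbors carry $2_b$ and $2_c$, for instance when $z$ lies on a triangle. Then no color is left for $y$, so your extension breaks. The paper avoids this by splitting on the color of $z$:
\begin{itemize}
\item If $z$ has color $1$, some $2$-color, say $2_a$, is missing on $N(z)$. Then $y,x_1,x_2,x_3,x_4$ get $2_a,2_b,1,2_c,1$, which is your pattern.
\item If $z$ has color $2_a$, they get $1,1,2_b,2_c,2_a$. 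Now $y$ may take $1$, $x_2$ only has to avoid the color of $z$, and $x_4$ may reuse $2_a$ because $dist_G(x_4,z)=4$.
\end{itemize}

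The ``main obstacle'' you raise in Step~4 does not occur, which is why the paper finishes with no adjustment. Your remedies are therefore unnecessary, and neither would work as stated. With both ends required to be $1$, the periodic pattern $1\,2_a\,1\,2_b$ leaves essentially only the swap of $2_a$ and $2_b$. Recoloring with $2_c$ is impossible, because a triangle $2$-vertex on a path and its partner are both adjacent to the vertex of $Y$ in that triangle, which already carries $2_c$.

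What settles the issue is the structure of $Y$:
\begin{itemize}
\item A $2$-vertex of $Y$ lies in a triangle, so its two neighbors are adjacent and hence consecutive on a path.
\item A $3$-vertex of $Y$ lies in a triangle of $3$-vertices. By $2$-saturation its third neighbor is a $2$-vertex, whose other neighbor is not in $Y$ because $Y$ is a $2$-packing. That neighbor is therefore an end of its path and is colored $1$.
\end{itemize}
Hence two vertices with the same $2$-color never share a neighbor in $Y$. Distances through $G-Y$ are path distances, since the components of $G-Y$ are induced paths.

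The short paths allowed by Lemma~\ref{lemma 2-sat} contain a triangle $2$-vertex and have order at least $3$. Otherwise two vertices of $Y$ would be within distance $2$, or $G$ would contain a diamond, or $G=K_3$. The forced pattern $1\,2_a\,1$ is harmless, because its $2$-colored vertex is interior, so property (i) applies.

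There is also a small slip in Step~1: $N(w)\setminus\{u\}$ may carry two $2$-colors. However, when $w$ has color $1$, at most two vertices lie at distance $2$ from $u$, so a $2$-color is always free and no recoloring is needed.
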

\begin{proof}
On the contrary, let $G$ be a counter-example of minimum order. Obviously, $G$ is connected and contains a $3$-vertex.

\begin{claim}
$\delta(G)\ge 2$.
\end{claim}
\begin{proof}
Suppose, to the contrary, that there exists a vertex $x$ of degree one. Let $y$ be its (unique) neighbor. By minimality of $G$, the graph $G-x$ admits a $(1,2^3)$-packing coloring with palette $\{1,2_a,2_b,2_c\}$. If $y$ is not colored $1$, then color $x$ by $1$ and so the coloring is extended to $G$, a contradiction. Otherwise, $y$ has color $1$; pick $i\in\{a,b,c\}$ such that no vertex at distance~2 from $x$ has color $2_i$ (one exists because at most two vertices are at distance~2 from $x$). Color $x$ by $2_i$, which preserves all packing constraints. This extends the coloring to $G$, a contradiction.
\end{proof}

\begin{claim}
$G$ contains no diamond.
\end{claim}
\begin{proof}
    Suppose to the contrary that $G$ contains a diamond and let $x_1,x_2,x_3,x_4$ be the vertices of a diamond such that $x_1x_3$ is a chord. As $G$ is connected and $x_1$ and $x_3$ are $3$-vertices, either $x_2$ or $x_4$ has a neighbor outside the diamond. Without loss of generality, suppose that $x_2$ is adjacent to a vertex $y$ outside the diamond. As $G$ is $2$-saturated, $x_4$ and $y$ are $2$-vertices. consider a $(1,1,2)$-packing coloring of $G-\{x_1,x_2,x_3,x_4,y\}$. Clearly, $y$ has one colored neighbor, say $z$. If $z$ is colored by $1$, then at least one of the colored neighbors of $z$ are not colored by a color $2$, say $2_a$. Now, color $y,x_1,x_2,x_3,x_4$ by $2_a,2_b,1,2_c,1$, respectively. We obtain a $(1,2,2,2)$-packing coloring of $G$, a contradiction. Otherwise, $z$ is colored by a color $2$, say $2_a$. Color $y,x_1,x_2,x_3,x_4$ by $1,1,2_b,2_c,2_a$, respectively. Again, $G$ is $(1,2,2,2)$-packing colorable, a contradiction.
\end{proof}

\begin{claim}
If $C$ is a cycle, then $V(G)\neq N[C]$.
\end{claim}
\begin{proof}
    Suppose to the contrary that there exists a cycle $C$ such that $V(G)=N[C]$. If $C$ is of order $5$, it can be easily checked that $G$ is $(1,2,2,2)$-packing coloring, which is a contradiction. Otherwise, $C$ is not of order $5$. Then, by Lemma \ref{lemma 1}, $C$ has a $(1,2,2)$-packing coloring using the colors $1,2_a,2_b$. By coloring the vertices of $N(C)$ by $2_c$ we obtain a $(1,2,2,2)$-packing coloring of $G$, a contradiction.
\end{proof}\\
\\
Now, by Lemma \ref{lemma 2-sat}, $G$ contains a $2$-packing $Y$ such that $G-Y$ is a set of disjoint paths $P_1,\dots ,P_{\alpha}$ of order at least $4$ such that $dist_G(x,y)\geq 3$ for all interior vertices $x\in P_i$ and $y\in P_j$, $i\neq j$. By Lemma \ref{lemma paths (1,2,2)}, one can color the vertices of the paths $P_1,\dots ,P_{\alpha}$ by $1,2_a,2_b$ such that the ends of them are colored by $1$. We obtain a $(1,2,2,2)$-packing coloring of $G$, a contradiction.
\end{proof}

\begin{theorem}\label{2-sat (2,2,2,2,2)}
Every $2$-saturated subcubic graph such that $g_3(G)=3$ is $(2,2,2,2,2)$-packing colorable.
\end{theorem}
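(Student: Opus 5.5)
We argue by a minimum counter-example $G$, in the same way as for Theorem~\ref{2-sat (1,2,2,2)}, using the palette $2_a,2_b,2_c,2_d,2_e$. Clearly $G$ is connected and contains a $3$-vertex. The plan is to verify the three structural hypotheses of Lemma~\ref{lemma 2-sat}, and then to color $G-Y$ with four colors via Lemma~\ref{lemma paths (2,2,2,2)}, reserving $2_e$ for the $2$-packing $Y$.

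\textbf{Step 1: $\delta(G)=2$.} Suppose $x$ is a $1$-vertex with neighbor $y$. Take a coloring of $G-x$. The vertices within distance $2$ of $x$ are $y$ and at most two further neighbors of $y$, so at most three colors are blocked. Hence one of the five colors is free for $x$.

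\textbf{Step 2: $G$ contains no diamond.} Label the diamond $x_1,x_2,x_3,x_4$ with chord $x_1x_3$ and outside neighbor $y$ of $x_2$, as in the earlier theorems. By $2$-saturation, $y$ and $x_4$ are $2$-vertices. Color $G-\{x_1,\dots,x_4,y\}$ by minimality. Only $y$ has a colored neighbor $z$, and $z$ has at most two further colored neighbors, so at most three colors are forbidden for $y$. We pick a color for $y$, which forbids one color from $x_2$. The four diamond vertices are pairwise within distance $2$, so they need four distinct colors. Among the at least four colors still available after $y$'s color, we give $x_2$ one different from $z$'s color, and give $x_1,x_3,x_4$ the remaining ones. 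Vertices $x_1,x_3,x_4$ are at distance at least $3$ from all earlier colored vertices except $y$, and $x_2$ is within distance $2$ of $z$. A short count shows this works.

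\textbf{Step 3: no cycle $C$ with $V(G)=N[C]$.} If $C\neq C_5$, Lemma~\ref{lemma 1}(iii) gives a $(2,2,2,2)$-coloring of $C$ with $2_a,\dots,2_d$. The vertices of $N(C)\setminus V(C)$ are pairwise at distance at least $3$: two such vertices with a common neighbor, or adjacent, would contradict $2$-saturation or the no-diamond property, as in Claim~\ref{claim 2-sat 4}. So we color them all $2_e$. If $C=C_5$, there are only a few configurations and we check them directly.

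\textbf{Step 4: apply Lemma~\ref{lemma 2-sat}.} This gives the $2$-packing $Y$, which we color $2_e$. The graph $G-Y$ is a disjoint union of paths $P_i$, each of order at least $4$ except paths containing a $2$-vertex of a triangle. We color each long path by Lemma~\ref{lemma paths (2,2,2,2)} with $2_a,\dots,2_d$, with both ends receiving the same color, say $2_a$, and no interior vertex receiving it.

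Conflicts between different paths occur only at distance $2$ through a vertex of $Y$. By Lemma~\ref{lemma 2-sat}(i),(ii), interior--interior and end--end pairs from different paths are at distance at least $3$. Within one path, two vertices joined through a $Y$-vertex might be at distance $2$ in $G$ but far apart on the path. This also needs checking.

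\textbf{Main obstacle.} The remaining danger is an end $x$ of $P_i$ at distance $2$ from an interior vertex $y$ of $P_j$. This is why we make every interior vertex avoid the end color: an end colored $2_a$ then never conflicts with an interior vertex of another path. To handle a path that is a chord-like return to itself, we permute $2_b,2_c,2_d$ on each path independently, or cyclically shift the pattern, to avoid the few within-path distance-$2$ conflicts through $Y$.

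The short exceptional paths (orders $1$--$3$, which contain a triangle $2$-vertex) need separate care. There the local structure is a triangle with one vertex in $Y$, and we can color the short path with colors distinct from its at most few neighbors at distance $\le 2$. I expect this case analysis for short paths and same-path returns through $Y$ to be the hardest part. If it fails, the fallback is to put the triangle $2$-vertex into $Y$ and recheck the $2$-packing property.
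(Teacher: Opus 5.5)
Your framework is the paper's: a minimal counterexample, the three reductions, then Lemma~\ref{lemma 2-sat}, with $Y$ coloured $2_e$ and the long paths coloured by Lemma~\ref{lemma paths (2,2,2,2)} so that ends get $2_a$ and interiors never do. Steps 1 and 2 are correct. In Step 2 you also delete and recolour $y$, whereas the paper keeps $y$; both versions work. The gap is the final colouring, which you leave open, and neither proposed remedy works. Permuting $2_b,2_c,2_d$ on a path cannot repair a clash between two vertices of the \emph{same} path, because a permutation preserves which pairs share a colour. You also give no argument that a cyclic shift would help. The fallback of moving the triangle $2$-vertex into $Y$ breaks the $2$-packing: $G-Y$ is acyclic, so that triangle already contains a vertex of $Y$, and the triangle $2$-vertex is adjacent to it.

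The missing idea is the local structure of $Y$, stated at the end of the proof of Lemma~\ref{lemma 2-sat}. Every $y\in Y$ lies in a triangle, and if $d(y)=3$ that triangle consists of $3$-vertices.
If $d(y)=2$, the two neighbours of $y$ are adjacent.
If $d(y)=3$, let $yuv$ be its triangle and $w$ its third neighbour. The third neighbours of $u$ and $v$ are at distance $2$ from $y$, hence not in $Y$, so $u$ and $v$ are interior vertices. By $2$-saturation $w$ is a $2$-vertex, and its other neighbour is not in $Y$, so $w$ is an end.
Thus two non-adjacent vertices of $G-Y$ with a common neighbour in $Y$ always form an end--interior pair, whether or not they lie on the same path. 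The convention ``ends $2_a$, interiors never $2_a$'' already separates them, so nothing needs shifting; the paper leaves this check implicit.

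The same reasoning, together with the absence of diamonds, shows that a short path is exactly $zxx_1$. Here $z$ is the triangle $2$-vertex, $zxy$ is a triangle with $y\in Y$ and $N(y)=\{z,x\}$, and $x_1$ is a $2$-vertex whose other neighbour lies in $Y$. The only vertices within distance $2$ of $z$ are $y,x,x_1$. So colour $x_1$ by $2_a$ and $z,x$ by $2_b,2_c$, which is exactly the paper's colouring.

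A minor point on Step 3: Claim~\ref{claim 2-sat 4} concerns only good cycles, and for other cycles $N(C)\setminus V(C)$ need not be a $2$-packing. For example, take a triangle $abc$ together with a path $auvb$; then $u$ and $v$ are adjacent. Lemma~\ref{lemma 2-sat} uses the hypothesis only for good cycles and triangles, so it suffices to prove Step 3 for those. For a triangle, $V(G)=N[C]$ forces $|V(G)|\le 5$, and that case is trivial. The paper is equally terse here.
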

\begin{proof}
On the contrary, let $G$ be a counter-example of minimum order. Obviously, $G$ is connected and contains a $3$-vertex.

\begin{claim}
$\delta(G)\geq 2$.
\end{claim}
\begin{proof}
    Suppose to the contrary that there exists a vertex $u$ in $G$ such that $d(u)= 1$. The graph $G-u$ is $2$-saturated subcubic such that $g_3(G-u)=3$. Then, $G-u$ is $(2,2,2,2,2)$-packing colorable. As the number of vertices that are at a distance of at most $2$ from $u$ is at most $3$, one can find a color $2_i$ for $u$ that extends the $(2,2,2,2,2)$-packing coloring to $G$, a contradiction.
\end{proof}

\begin{claim}
$G$ contains no diamond.
\end{claim}
\begin{proof}
    Suppose to the contrary that $G$ contains a diamond and let $x_1,x_2,x_3,x_4$ be the vertices of a diamond such that $x_1x_3$ is a chord. As $G$ is connected and $x_1$ and $x_3$ are $3$-vertices, either $x_2$ or $x_4$ has a neighbor outside the diamond. Without loss of generality, suppose that $x_2$ is adjacent to a vertex $y$ outside the diamond. As $G$ is $2$-saturated, $x_4$ and $y$ are $2$-vertices. consider a $(2,2,2,2,2)$-packing coloring of $G-\{x_1,x_2,x_3,x_4\}$. Without loss of generality, we may suppose that $y$ and its colored neighbor are colored by $2_a$ and $2_b$, respectively. Color $x_1,x_2,x_3,x_4$ by $2_c,2_d,2_e,2_b$, respectively. We obtain a $(2,2,2,2,2)$-packing coloring of $G$, a contradiction.
\end{proof}

\begin{claim}
If $C$ is a cycle, then $V(G)\neq N[C]$.
\end{claim}
\begin{proof}
    Suppose to the contrary that there exists a cycle $C$ such that $V(G)=N[C]$. If $C$ is of order $5$, it can be easily checked that $G$ is $(2,2,2,2,2)$-packing coloring, which is a contradiction. Otherwise, $C$ is not of order $5$. Then, by Lemma \ref{lemma 1}, $C$ has a $(2,2,2,2)$-packing coloring using the colors $2_a,2_b,2_c,2_d$. By coloring the vertices of $N(C)$ by $2_e$ we obtain a $(2,2,2,2,2)$-packing coloring of $G$, a contradiction.
\end{proof}\\
\\
Now, by Lemma \ref{lemma 2-sat}, $G$ contains a $2$-packing $Y$ such that $G-Y$ is a set of disjoint paths $P_1,\dots ,P_{\alpha}$ of order at least $4$, unless $P_i$ contains a $2$-vertex that belongs to a triangle, satisfying the following for all $x\in P_i$ and $y\in P_j$, $i\neq j$:\begin{itemize}
     \item[(i)] $dist_G(x,y)\geq 3$ whenever $x$ and $y$ are interior vertices,
     \item[(ii)] $dist_G(x,y)\geq 3$ whenever $x$ and $y$ are ends.
 \end{itemize}
Suppose that $P_1,\dots ,P_{\beta}$ are of order at least $4$ and $P_{\beta+1},\dots ,P_{\alpha}$ are of order $3$. By Lemma \ref{lemma paths (2,2,2,2)}, one can color the vertices of the paths $P_1,\dots ,P_{\beta}$ by $2_a,2_b,2_c,2_d$ such that only the ends of them are colored by $2_a$. In addition, color the vertices of $P_{\beta+1},\dots ,P_{\alpha}$ by $2_a,2_b,2_c$ such that the end tat does not belong to a triangle is colored by $2_a$. Color the vertices of $Y$ by $2_e$ to obtain a $(2,2,2,2,2)$-packing coloring of $G$, a contradiction.
\end{proof}

\section{$3$-Saturated}

As a Corollary of Theorem \ref{2-sat (1,2,2,2)} and Theorem \ref{2-sat (2,2,2,2,2)}, we can deduce the following.
\begin{corollary}
    Every $(3,0)$-saturated subcubic graph $G$ such that $g_3(G)=3$ is $(1,2,2,2,2)$-packing colorable and $(2,2,2,2,2,2)$-packing colorable.
\end{corollary}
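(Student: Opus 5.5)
Let $G$ be a $(3,0)$-saturated subcubic graph with $g_3(G)=3$; we may assume $G$ is connected and contains a heavy vertex, since otherwise $G$ is $2$-saturated and Theorem~\ref{2-sat (1,2,2,2)} and Theorem~\ref{2-sat (2,2,2,2,2)} already give $(1,2,2,2)$ and $(2,2,2,2,2)$-packing colorings, which are in particular $(1,2,2,2,2)$ and $(2,2,2,2,2,2)$-packing colorings. The plan is to remove the heavy vertices, color the resulting $2$-saturated graph by the quoted theorems, and give every heavy vertex the new extra color $2_\ast$ (namely $2_d$ for $(1,2,2,2,2)$, and $2_f$ for $(2,2,2,2,2,2)$).

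Let $H$ be the set of heavy vertices. Since $G$ is $(3,0)$-saturated, $H$ is independent. The first key step is to show that $H$ is a $2$-packing, i.e.\ no two heavy vertices $h,h'$ have a common neighbor $w$. Suppose they do. Then $w$ is a $3$-vertex adjacent to the two $3$-vertices $h,h'$, so in particular $w$ is not heavy, and its third neighbor must be a $2$-vertex. Now $g(h)=3$, so $h$ lies in a triangle $hab$ with $a,b\in N(h)$, and both $a$ and $b$ are $3$-vertices. The vertex $a$ is adjacent to $h$ and $b$, which are both $3$-vertices. Its third neighbor cannot be a $3$-vertex, since otherwise $a$ would be heavy and adjacent to the heavy vertex $h$. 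The same holds for $b$. If $w\in\{a,b\}$, say $w=a$, then the third neighbor of $w$ is $b$ (a $3$-vertex) and $h'$ is also adjacent to $w$. This would make $w$ adjacent to three $3$-vertices $h,b,h'$, so $w$ would be heavy, which is a contradiction. Hence $w\notin\{a,b\}$, and $w$ is the third neighbor of $h$. Repeating the argument with $h'$ in place of $h$ shows that $w$ is likewise the neighbor of $h'$ outside its triangle. I expect the main obstacle to be ruling out this remaining configuration, a "bridge" vertex $w$ joining two triangles through heavy endpoints. I would first check whether the definition of $(3,0)$-saturation already forbids it. If it does not, I would instead put $w$ into the color class $2_\ast$ in place of one of $h,h'$, choosing representatives so that the selected set is still a $2$-packing.

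Granting that $H$ is a $2$-packing, set $G'=G-H$. Every $3$-vertex of $G'$ is a non-heavy $3$-vertex of $G$ with no heavy neighbor, so it still has a $2$-vertex neighbor and $G'$ is $2$-saturated. For the girth condition, the triangle through a $3$-vertex $v$ of $G'$ might contain a heavy vertex. In that case the condition $g_3(G')=3$ may fail, and we cannot invoke the theorems directly. To handle this, I would instead note that the proofs of Theorem~\ref{2-sat (1,2,2,2)} and Theorem~\ref{2-sat (2,2,2,2,2)} only need the $3$-vertices to have small local girth through Lemma~\ref{lemma 2-sat}, so we apply the minimal counterexample argument to $G'$ with the triangles of $G$ kept in mind. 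Alternatively, we can apply the theorems to each component of $G'$, after checking that every remaining $3$-vertex lies in a triangle avoiding $H$, using the same $(3,0)$ local analysis. Finally, color $G'$ with $(1,2,2,2)$, respectively $(2,2,2,2,2)$, and color $H$ with $2_\ast$. This is valid because distances in $G'$ are at least distances in $G$ only up to the removal; to be safe, the step should be done as: color $G$ minus $H$ while preserving the constraints of $G$. This is where the proof needs the most care.
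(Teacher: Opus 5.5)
Your argument has a genuine gap at the step you flag at the end. Removing the heavy vertices does not preserve distances, and neither Theorem~\ref{2-sat (1,2,2,2)} nor Theorem~\ref{2-sat (2,2,2,2,2)} lets you ``color $G-H$ while preserving the constraints of $G$.'' Concretely, let $h$ be heavy with triangle $hab$ and third neighbor $c$. Then $\mathrm{dist}_G(a,c)=\mathrm{dist}_G(b,c)=2$, because $a$'s third neighbor has degree at most $2$ and so is not $c$. In $G-H$, however, $a$ and $c$ need not be within distance $2$ of each other. A $(1,2,2,2)$- or $(2,2,2,2,2)$-packing coloring of $G-H$ may therefore give $a$ and $c$ the same color $2_x$, which is illegal in $G$. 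Giving $H$ the new color $2_\ast$ does nothing to repair this. So the problem is the choice of deleted set, not just bookkeeping.

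The missing idea, which is what the paper does, is to delete a $2$-vertex whose two neighbors are adjacent, rather than the heavy vertex itself. Take a heavy $x$ with triangle $xx_1x_2$ and third neighbor $x_3$. Then $x_3$ is a non-heavy $3$-vertex, so it has a $2$-vertex neighbor $y$. Its triangle must be $x_3yz$: it cannot use $x$ with $y$, since $y\notin N(x)$. It cannot use $x$ with $z$ either, since $z\in\{x_1,x_2\}$ would make $x_1$ or $x_2$ a heavy neighbor of $x$.

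Now delete all such vertices $y$, one for each heavy vertex. This has four effects:
\begin{itemize}
\item Distances among the remaining vertices are unchanged, because the only two neighbors $x_3,z$ of $y$ are adjacent, so any path through $y$ can be shortcut.
\item $x_3$ becomes a $2$-vertex, so $x$ is no longer heavy and the resulting graph $G'$ is $2$-saturated.
\item Every $3$-vertex of $G'$ keeps its triangle, so $g_3(G')=3$.
\item The deleted set is a $2$-packing, so it can receive the extra color $2$ while $G'$ is colored by the $2$-saturated theorems.
\end{itemize}

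Two smaller remarks. First, the ``bridge'' configuration you could not exclude is impossible. A common neighbor $w$ of two heavy vertices $h,h'$ has neighbors $h,h',u$ with $hh'\notin E(G)$ and $u\notin N(h)\cup N(h')$. So $w$ lies in no triangle, contradicting $g_3(G)=3$. Second, your worry about $g_3(G-H)$ is unfounded: a $3$-vertex of $G-H$ has no neighbor in $H$, so its triangle survives. Neither of these rescues the approach, because the distance issue above remains.
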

\begin{proof}
Let $G$ be a $(3,0)$-saturated subcubic graph such that $g_3(G)=3$ and let $x^1,\dots, x^s$ be the heavy vertices of $G$. Let $i\in \{1,\dots ,s\}$. We denote bye $x^i_1,x^i_2,x^i_3$ the neighbors of $x^i$ such that $x^i_1$ and $x^i_2$ are adjacent. We denote by $y^i$ and $z^i$ the neighbors of $x^i_3$ such that $y^i$ is a $2$-vertex. Obviously, $y^i$ and $z^i$ are adjacent. If is clear that, for every distinct integers $i,j\in \{1,\dots, s\}$, we have $dist_G(y^i,y^j)\geq 3$. Moreover, the graph $G'$ obtained by deleting the vertices $y^1,\dots ,y^s$ from $G$ is $2$-saturated such that $g_3(G')=3$. By Theorem \ref{2-sat (1,2,2,2)} (resp., Theorem \ref{2-sat (2,2,2,2,2)}), $G'$ is $(1,2,2,2)$-packing colorable (resp., $(2,2,2,2,2)$-packing colorable). Color the vertices of $G'$ by $1,2_b,2_c,2_d$ (resp., $2_b,2_c,2_d,2_e,2_f$) and the vertices $y^1,\dots ,y^s$ by $2_a$ in order to obtain a $(1,2,2,2,2)$-packing coloring (resp., $(2,2,2,2,2,2)$-packing coloring) of $G$.
\end{proof}

Below, we study the $(1,1,2,k)$-packing colorings for $(3,0)$- and $(3,1)$-saturated subcubic graphs.
\begin{theorem}
    Let $G$ be a $(3,i)$-saturated subcubic graph, $0\leq i\leq 1$, such that $g_3(G)=3$, then:\begin{enumerate}
        \item  $G$ is $(1,1,2,4)$-packing colorable if $i=0$.
        \item $G$ is $(1,1,2,3)$-packing colorable if $i=1$.
    \end{enumerate}
\end{theorem}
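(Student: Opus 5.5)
Following the pattern of the preceding corollary, we reduce to the $2$-saturated case by removing one well-chosen vertex near each heavy vertex, and then use the last color for the removed vertices.

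\textbf{Setup.} Let $x^1,\dots,x^s$ be the heavy vertices of $G$. Since $g_3(G)=3$, each $x^i$ lies in a triangle $x^ix^i_1x^i_2$. Because $x^i$ is heavy, $x^i_1,x^i_2$ are $3$-vertices adjacent to two $3$-vertices each. Let $x^i_3$ be the third neighbor.

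\textbf{Case $i=0$.} The vertex $x^i_3$ is a non-heavy $3$-vertex in a triangle $x^i_3y^iz^i$ with $y^i$ a $2$-vertex, as in the corollary. We delete $Y=\{y^1,\dots,y^s\}$. The resulting $G'$ is $2$-saturated: $x^i_3$ now has degree $2$, so $x^i$ is adjacent to only two $3$-vertices. Also $g_3(G')=3$, since every remaining $3$-vertex keeps a triangle; this must be checked for $z^i$, whose triangle used $y^i$, and if $z^i$ is still a $3$-vertex we may need to choose $y^i$ more carefully. By the theorem above, $G'$ has a $(1,1,2)$-packing coloring. Finally we color $Y$ by $4$, which requires $dist_G(y^i,y^j)\ge 5$. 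This is where $(3,0)$-saturation is used. Heavy vertices are pairwise nonadjacent, and the structure around each heavy vertex is a triangle plus a pendant triangle, with the non-heavy vertices in between having degree-$2$ neighbors. So a short path between $y^i$ and $y^j$ would pass through $3$-vertices forced to be heavy or to be adjacent to too many $3$-vertices. The key step is to enumerate paths of length at most $4$ from $y^i$ to $y^j$ and derive a contradiction in each case.

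\textbf{Case $i=1$.} Heavy vertices can now come in adjacent pairs. If $x$ is heavy with no heavy neighbor, we treat it as above. If $x,x'$ are adjacent heavy vertices, then by $g_3=3$ and the degree constraints they lie in a common triangle $xx'w$, where $w$ is not heavy. We then remove one $2$-vertex $y$ near the pair, chosen so that after deletion no $3$-vertex has three $3$-neighbors. Alternatively, we delete a vertex of the heavy pair's neighborhood and recolor locally. We then color the deleted vertices by $3$, which requires pairwise distance at least $4$; this is a weaker requirement than in Case $i=0$, consistent with the smaller last color.

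\textbf{Main obstacle.} The main obstacle is the choice of the deleted vertex in each local configuration. It must simultaneously (a) make $G'$ $2$-saturated, (b) preserve $g_3(G')=3$, and (c) satisfy the distance bound $5$ (resp.\ $4$) between deleted vertices. We would first try always deleting the $2$-vertex in the triangle of $x^i_3$. If that breaks (b), we would instead delete $x^i_3$'s $2$-neighbor and argue that the remaining $3$-vertices still lie on triangles. If no deletion works for some configuration, the fallback is to extend a $(1,1,2)$-coloring of $G'$ by hand, using the freedom to swap $1_a$ and $1_b$ locally.
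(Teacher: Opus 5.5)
Your reduction fails at the step you call the key step. The deleted vertices need not be pairwise at distance at least $5$, or even $4$.

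Conditions (a) and (b) are fine. Deleting the $2$-vertex $y^i$ of the triangle $x^i_3y^iz^i$ lowers the degree of both $x^i_3$ and $z^i$. Since $x^i_3z^i\in E(G)$, distances among the remaining vertices are unchanged, so a $(1,1,2)$-packing coloring of $G'$ lifts to $G$. Condition (c), however, is false in general.

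Here is a counterexample.
\begin{itemize}
\item Take a triangle $xx_1x_2$, a $2$-vertex $p$ adjacent to $x_1$ and $x_2$, an edge $xx_3$, and a triangle $x_3yz$ with $d(y)=2$.
\item Take a disjoint copy $x',x_1',x_2',p',x_3',y',z'$ and add the edge $zz'$.
\end{itemize}
Every $3$-vertex lies on a triangle. The only heavy vertices are $x$ and $x'$, and they are non-adjacent. So $G$ is $(3,0)$-saturated, hence also $(3,1)$-saturated, with $g_3(G)=3$.

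Both of your rules force $Y=\{y,y'\}$: $y$ is the only $2$-vertex on the triangle of $x_3$ and also the only $2$-neighbor of $x_3$. Yet $y\,z\,z'\,y'$ is a path, so $dist_G(y,y')=3$. Hence neither color $4$ nor color $3$ can be given to both. Your heuristic overlooks that $z$ is already non-heavy because of $y$, so saturation puts no constraint on its third neighbor. Since in the case $i=1$ you treat a heavy vertex without heavy neighbors ``as above'', the same graph defeats that case too.

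No simple change of local rule repairs this.
\begin{itemize}
\item Deleting the heavy vertices themselves fails, because two heavy vertices can be at distance $3$. For example, heavy $x,x'$ can be joined by a path $x\,a\,b\,x'$ where $abc$ is a triangle with $d(c)=2$.
\item Deleting a vertex that is not a $2$-vertex on a triangle fails differently. Take, say, a $2$-neighbor of $x_1$ that is not adjacent to $x_2$. Its deletion no longer preserves distances, so the black-box $(1,1,2)$-coloring of $G'$ may put color $2$ on two vertices at distance $2$ in $G$. Swapping $1_a,1_b$ locally does not fix that.
\end{itemize}
Separately, your premise for the case $i=1$ that two adjacent heavy vertices share a triangle is false. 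Join triangles $xab$ and $x'cd$ by the edge $xx'$, give $a,b$ a common $2$-neighbor, and give $c,d$ another common $2$-neighbor.

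The paper does not reduce to the $2$-saturated theorem at all. It selects simultaneously two sets of triangle vertices, each triangle meeting their union at most once:
\begin{itemize}
\item a $2$-packing $X$, colored $2$;
\item a $4$-packing (resp.\ $3$-packing) $Y$, colored $4$ (resp.\ $3$).
\end{itemize}
The choice maximizes a weight favoring $2$-vertices and vertices on two triangles, then non-heavy vertices, then heavy ones (with $k_1>2k_3$). Among maximizers it minimizes the number of surviving triangles. Exchange arguments show every triangle is hit. An extension step then hits the remaining odd cycles, and the bipartite remainder is colored $1_a,1_b$. This global extremal choice is the missing idea that replaces your per-heavy-vertex deletion rule.
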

\begin{proof}
Let $G$ be a $(3,i)$-saturated subcubic graph, $0\leq i\leq 1$, such that $g_3(G)=3$. For $i=0$, we will prove  $G$ has a $(1,1,2,4)$-packing coloring, and for $i=1$ we will prove $G$ has a $(1,1,2,3)$-packing coloring. Without loss of generality, suppose that $G$ is a connected graph.

Our plan is to prove the existence of a 2-packing $X$  and a 4-packing (resp., 3-packing) $Y$  in $G$  if $i=0$ (resp., $i=1$) such that the subgraph induced by $V(G)\setminus (X\cup Y)$ is a bipartite one.

A 3-vertex $x$ is said to be rich if $x$ is contained in two triangles. Let $x$ be a rich vertex and let $x_1$, $x_2$ and $x_3$ be the neighbors of $x$. As $x$ is rich, then one of its neighbors is also a rich vertex; assume this neighbor is $x_1$. Since $G$ is $(3,i)$-saturated with $i\in \{0,1\}$, neither $x_2$ nor  $x_3$  can be  rich. In fact, if $x_2$ is a rich vertex, then $x_2x_3\in E(G)$ and so $x$ and its neighbors are all heavy vertices, a contradiction. Moreover, if $G$ is $(3,1)$-saturated and if $x_2$ and $x_3$ are 3-vertices, then both $x_2$ and $x_3$ are non-heavy vertices.

For each vertex $x$ in $G$, we define the weight of $x$, $w(x)$, such that \\
$w(x)=\begin{cases} k_1 & \text{if $x$ is a 2-vertex or  a rich 3-vertex}\\
k_2 & \text {if $x$ is a non heavy 3-vertex and not rich}\\
k_3 & \text{if $x$ is a  heavy 3-vertex and  not rich.}
\end{cases}$

with $k_1$, $k_2$, $k_3$ are real numbers satisfying $k_1>k_2>k_3$ and $k_1>2k_3$.

Let $X,\; Y\subseteq V(G)$ such that $X$ and $Y$ are disjoint. We say that $(X,Y)$ is a packing pair of $G$ if: (1) $X$ is a 2-packing, (2) $Y$ is a 4-packing (resp., 3-packing) if $i=0$ (resp., $i=1$), (3) each triangle in $G$ has at most one vertex in $X\cup Y$, and (4) each vertex in $X\cup Y$ is a vertex of a triangle. For a packing pair $(X,Y)$, we define  $\theta(X, Y)$ to be the sum of the weights of the vertices in $X\cup Y$, and $\gamma(X, Y)$ to be the number of triangles in the subgraph $G[V(G)\setminus (X\cup Y)]$. The packing pair $(X,Y)$ is said to be a maximum packing pair if $\theta(X, Y)$ is maximum.
Let $(X,Y)$ be a maximum packing pair such that $\gamma(X, Y)$ is minimum. Let $G'=G[V(G)\setminus (X\cup Y)]$. We have the following results:
\begin{claim}\label{wt}
    If $x$ is a 3-vertex in $X\cup Y$ such that $x$ is not a rich one, then each other vertex of the triangle containing $x$ is a 3-vertex.
\end{claim}
\begin{proof}
 Suppose to the contrary that a vertex $y$ of the triangle containing $x$ is a 2-vertex. Then if $x\in X$ (resp., $x\in Y$), we get $(X',Y)$  (resp., $(X,Y')$) is a packing pair with $\theta(X',Y)\geq \theta(X,Y)-k_2+k_1>\theta (X,Y)$ (resp., $\theta(X,Y')\geq\theta(X,Y)-k_2+k_1>\theta (X,Y)$), where $X'=(X\setminus \{x\})\cup \{y\}$ (resp., $Y'=(Y\setminus\{x\})\cup \{y\})$, a contradiction.
\end{proof}

\medskip

We are going to prove that  $\gamma(X,Y)=0$.  
 Suppose to the contrary that $\gamma(X, Y)>0$ and let $T=abc$ be a triangle in $G'$. Then, by the maximality of $\theta(X, Y)$, for each vertex $u$ in $T$ there exists a vertex $x$ in $X$ (resp., $y$ in $Y$) such that $dist_G(u,x)<3$ (resp., $dist_G(u,y)<5$ if $i=0$ and $dist_G(u,y)<4$ if $i=1$).
 We have the following observation:
\begin{claim}
    No vertex in $T$ is adjacent to a vertex in $X$.
\end{claim}
\begin{proof}
    Suppose to the contrary that there exists a vertex $x$ in $X$ adjacent to a vertex in $T$; assume this vertex is  $a$.  By the definition of the packing pair, $x$ is a vertex of a triangle, say $T'=xyz$. Assume first that $V(T)\cap V(T')=\emptyset$. Thus, $x$ is not a rich vertex, and so, by Claim \ref{wt}, $y$ and $z$ are both 3-vertices. Consequently, $x$ is a heavy vertex. We will study two cases according to the saturation of $G$:
    \begin{enumerate}
        \item $i=0$.\\
        In this case, $a$ is a non heavy vertex since $G$ is $(3,0)$-saturated. Without loss of generality, suppose $c$ is a 2-vertex. Then $b$ is also adjacent to a vertex in $X$, since otherwise $(X',Y)$ is a packing pair with $\theta(X',Y)= \theta(X,Y)-k_3+k_1>\theta (X,Y)$, where $X'=(X\setminus \{x\})\cup \{c\}$, a contradiction. Let $x'$ be the neighbor of $b$ in $X$. Clearly, $x'$ is a vertex of a triangle not intersecting $T$, and so $x'$ is not a rich vertex. Thus,  by Claim \ref{wt}, $x'$ is a heavy vertex. Consequently, $(X',Y)$ is a packing pair with $\theta(X',Y)= \theta(X,Y)-2k_3+k_1>\theta (X,Y)$, where $X'=(X\setminus \{x,x'\})\cup \{c\}$, a contradiction. 
    \item $i=1$.\\ We need here to distinguish two cases:
    \begin{itemize}
        \item $b$ and $c$ are 3-vertices.\\
        In this cases, $a$ is a heavy vertex, and so, since $x$ is a heavy vertex, we get that $b$, $c$, $y$ and $z$  are all non heavy vertices. Thus, the neighbor of $b$ (resp., $c$) other than $a$ is a 2-vertex. We will study first the case when  $b$ and $c$ have a common neighbor other than $a$, say $x'$. In this case, $b$ (resp., $c$) is at distance at least four from each vertex in $Y\setminus\{x'\}$. Thus, if $x'\notin Y$, we get $(X,Y')$ is a packing pair, where $Y'=Y\cup\{b\}$, a contradiction. Thus $x'\in Y$, and so $(X,Y')$ is a packing pair with $\theta(X,Y)=\theta(X,Y')$ but $\gamma(X,Y')<\gamma(X,Y)$, where $Y'=(Y\setminus\{x'\})\cup\{b\}$, a contradiction. Thus, $b$ and $c$ have no common neighbor other than $a$. Therefore,   the neighbor of $b$ (resp., of $c$) other than $a$ is a not a vertex of $X\cup Y$ since it is not contained in a triangle. Hence, $a$ is at distance at least three from each vertex in $X\setminus\{x\}$.  Let $y'$ (resp., $z'$) be the neighbor of $y$ (resp., $z$ ) other than $x$. Clearly, $y'$ and $z'$ are 2-vertices. Moreover, $y'$ has a neighbor in $X$, since otherwise  $(X', Y)$ is a packing pair with $\theta(X',Y)>\theta (X,Y)$, where $X'=(X\setminus \{x\})\cup \{y\}$, a contradiction. Similarly, $z'$ has a neighbor in $X$. Let $y''$ (resp., $z''$) be the neighbor of $y'$ (resp., $z'$) in $X$. By the definition of $X$, $y''$ (resp., $z''$) is a vertex of a triangle. Consequently, since   each triangle in $G$ has at most one vertex in $X\cup Y$, and since the neighbor of $b$ (resp., $c$) is not a vertex of a triangle, we deduce that $x$ is at distance at least four from each vertex of $Y$. Let $X'=(X\setminus \{x\})\cup \{a\}$ and $Y'=Y\cup \{x\}$, then $(X',Y')$ is a packing pair with $\theta(X',Y')>\theta (X,Y)$, a contradiction.
        \item $b$ and $c$ are not both 3-vertices.
        \\
        Without loss of generality, suppose that $c$ is a 2-vertex. Then we proceed as in case $i=0$ in order to reach a contradiction. 
    \end{itemize}
    \end{enumerate}
    Assume now that $V(T)\cap V(T')\neq \emptyset$. Clearly, $\{b,c\}\nsubseteq N(x)$, since otherwise each vertex of $T$ is heavy vertex, a contradiction. Thus, without loss of generality, suppose $T'=xab$. Then, $x$ and $c$ are not rich.  If $a$ is at distance at least three from each vertex in $X\setminus \{x\}$, then $(X',Y)$ is a packing pair with  $\theta (X',Y)\geq\theta(X,Y)$ but $\gamma (X',Y)<\gamma(X,Y)$, where $X'=(X\setminus\{x\})\cup \{a\}$, a contradiction. Hence, $c$ has a neighbor in $X$, say $x'$.  Clearly, the triangle containing $x'$ does not contain $c$. Thus, $c$ is a heavy vertex and $x'$ is  a heavy vertex by Claim \ref{wt}. Therefore, in case $G$ is $(3,0)$-saturated, we reached a contradiction.     Suppose now  $G$ is $(3,1)$-saturated. Note that $x$ is a 2-vertex, since otherwise each vertex of $T$ is a heavy one, a contradiction.  Consequently, $a$ is at distance at least four from each vertex in $Y$. Thus, we get $(X,Y')$ is a packing pair with $\theta(X,Y')>\theta(X,Y)$, where $Y'=Y\cup\{a\}$, a contradiction.\end{proof}

     Consequently, since for each vertex $u\in T$ there exists a vertex $x\in X$ with $dist_G(x,u)<3$, we get that each vertex of $T$ is a 3-vertex. Besides, $a'$ (resp., $b'$ and $c'$) has a neighbor in $X$, where $a'$ (resp., $b'$ and $c'$) is the neighbor of $a$ (resp., $b$ and $c$) outside $T$. Since $G$ is $(3,i)$-saturated, $0\leq i\leq 1$, then at least one vertex of $a'$, $b'$, and $c'$ is a 2-vertex; assume $a'$ is a 2-vertex. Since each vertex in $X$ is a vertex of a triangle, and since at most one vertex of a triangle is contained in $X\cup Y$, then:\begin{itemize}
     \item If $i=0$.\\
     Let $x$ be the neighbor of $a'$ in $X$, and let $T=xyz$ be the triangle containing $x$. Note that neither $y$ nor $z$ has a neighbor in $Y$. In fact, if $z$ has a neighbor in $Y$, say $z'$, then, by Claim \ref{wt} and by the definition of the packing pair, we get $z'$ and $z$ are both heavy vertices, a contradiction.   Thus $a$ is at distance at least five from each vertex in $Y$ and so $(X,Y\cup\{a\})$ is a packing pair, a contradiction.
         \item If $i=1$.\\
         We have $a$  is at distance at least four from each vertex in $Y$.  Therefore, $(X,Y\cup \{a\})$ is a packing pair, a contradiction. 
     \end{itemize}

    Thus, $\gamma(X,Y)=0$.

    Let $A^T, \; B^T\subseteq V(G)$, we say that $(A^T,B^T)$ is an extension packing pair of a packing pair $(A,B)$ if: (1) $A\subseteq A^T$ and $B\subseteq B^T$, (2) $A^T$ is a 2-packing and $B^T$ is a 4-packing if $i=0$ and 3-packing if $i=1$, (3) each vertex in $A^T\setminus A$ (resp., $B^T\setminus B)$ is a vertex of an odd cycle of length greater than 3 in  $G\setminus (A\cup B)$. Let $\phi (A^T,B^T)$  be the number of odd cycles in $G[V(G)\setminus (A^T\cup B^T])$. Without loss of generality,  suppose that the maximum packing pair $(X,Y)$ has an extension packing pair $(X^T,Y^T)$ such that $\phi(X^T,Y^T)$ is minimum among all $\phi(A^T,B^T)$, whenever $(A^T,B^T)$ is an extension packing pair of a maximum packing pair $(A,B)$  with $\gamma (A,B)=0$. Let $G''=G[V(G)\setminus (X^T\cup Y^T)]$. 

    We are going to prove that $\phi(X^T,Y^T)=0$. Suppose to the contrary that $\phi(X^T,Y^T)>0$ and let $C$ be an odd cycle in  $G''$. Clearly, $C$ is of length greater than three, and, by the minimality of $\phi(X^T,Y^T)$, for each vertex $x$ on $C$ there exists a vertex $u\in X^T$ (resp., $v\in Y^T$) such that $dist_G(x,u)<3$ (resp., $dist_G(x,v)<5$ if $i=0$ and $dist_G(x,v)<4$ if $i=1$). Besides, since $g_3(G)=3$ and $G'$ contains no triangles, then each 3-vertex $x$ on $C$ has a neighbor in $X\cup Y$, say $u$, such that $u$ and $x$ are contained in  a triangle. Note that each 3-vertex on $C$ is not rich. In fact, suppose to the contrary that a vertex $x$ on $C$ is contained in two triangles and let $y$ and $z$ be the neighbors of $x$ on $C$ and $u$ be the neighbor of $x$ in $X\cup Y$. Clearly, the two triangles containing $x$ are then $uxy$ and $uxz$. Consequently, $u$ and $x$ are both heavy vertices and so we reached a contradiction in case $i=0$. For $i=1$, $y$ and $z$ are then non heavy vertices, and so the neighbor of $y$ (resp., $z$) on $C$ other than $x$ is a 2-vertex. Thus, if $u\in X^T$ (resp., $u\in Y^T$), we get $x$ is at distance at least four from each vertex in $Y^T$ (resp. $X^T$), a contradiction. Therefore, for each vertex $x$ on $C$, there exists a vertex $u \in X \cup Y$ such that 
$xu \in E(G)$, and a unique neighbor of $x$ on $C$ is adjacent to $u$.  Hence, since $C$ is an odd cycle, we get $C$ has at least one 2-vertex.  Let $x$ be a 3-vertex on $C$ and let $y$ and $z$ be the two neighbors of $x$ on $C$ such that $z$ is a 2-vertex. Then:  
    \begin{claim}\label{nb}
        \begin{enumerate}
          \item $y$ and $x$ have a common neighbor in $X$.
          \item The common neighbor of $x$ and $y$ in $X$ is either a 2-vertex or a non heavy vertex having no neighbor in $X^T\cup Y^T$.
           \item $z$ is at distance at least four from each vertex in $Y^T$.
        \end{enumerate}
        
    \end{claim}
    
    \begin{proof}
    
    \begin{enumerate}
        \item   Since $x$ is a 3-vertex then $x$ is contained in a triangle. Thus $x$ has a neighbor in $X^T\cup Y^T$, say $u$. Consequently, $u$ is a common neighbor of $x$ and $y$. Since $G'$ has no triangles, then $u\in X\cup Y$. Suppose to the contrary that $u\in Y$, then $u$ has a neighbor in $X^T$, since otherwise $x$ is at distance at least three from each vertex in $X^T$, a contradiction. Let $u'$ be the neighbor of $u$ in $X^T$. By the definition of the extension packing pair, we get  $u'$ is a 3-vertex, and so $u'$ is contained in a triangle. Thus, $u'\in X$. Clearly, $u'$ is not a rich vertex, and so, by Claim \ref{wt}, $u'$  is a heavy vertex. Consequently, $u$ is a heavy vertex. Thus, we reached a contradiction for $i=0$. For $i=1$, we have $x$ and $y$ are non heavy vertices. Besides, the neighbor of $z$ other than $x$ has a neighbor in $X^T$, since otherwise $z$ is at distance at least three from each vertex in $X^T$, a contradiction. Thus, $x$ is at distance at least four from each vertex in $Y\setminus\{u\}$, and so $(X,Y')$ is a packing pair with $\theta(X,Y')>\theta(X,Y)$, where $Y'=(Y\setminus\{u\})\cup \{x\}$, a contradiction. 

        \item Suppose to the contrary, that $u$ is a heavy vertex. As $z$ is a 2-vertex, then $x$ is at distance at least three from each vertex in $X\setminus \{u\}$. Hence, $(X',Y)$ is a packing pair with $\theta(X',Y)>\theta(X,Y)$, where $X'=(X\setminus\{u\})\cup \{x\}$, a contradiction. 
        Thus, the neighbor of $u$ other than $x$ and $y$ if exists is a 2-vertex. Besides, this neighbor if exists is not in $X^T\cup Y^T$ since it is not contained in a cycle in $G'$.

        \item  The result here follows directly from (1) and (2).
    \end{enumerate}
    \end{proof}

   Consequently, for $i=1$, we get that $(X^T, Y_1^T)$ is an extension packing pair of $(X,Y)$ with $\phi(X_1^t,Y^T)<\phi(X^T,Y^T)$, where $Y_1^T=Y^T\cup \{z\}$, a contradiction. We proceed with $i=0$. Set $C=v_1\dots v_{2k+1}$ for some $k\in \mathbb{N}$. If each vertex in $N(C)\cap (X^T\cup Y^T)$ is a 2-vertex, then $G=N[C]$ and so it can be easily shown that $G$ is $(1,1,2,4)$-packing colorable. Else, let $x$ be a vertex on $C$ such that the neighbor of $x$ in $X^T\cup Y^T$ is a 3-vertex and let $u$ be this neighbor. Let $y$ be the neighbor of $x$ on $C$ such that $uxy$ is a triangle. Since $G$ is $(3,0)$-saturated, then  either the neighbor of $x$ or of $y$ on $C$ is a 2-vertex.  Without loss of generality, suppose that the neighbor of $x$ on $C$ is a 2-vertex. By Claim \ref{nb}, each neighbor of a vertex on $C$ in $V(G)\setminus C$ is in $X^T\cup Y^T$, then each cycle passing through $u$ in $G\setminus (X_1^T\cup Y^T)$ should pass through $x$ and $y$, where $X_1^T=X^T\setminus \{u\}.$ Let $X'=(X\setminus \{u\})\cup \{x\}$, then $(X',Y)$ is a packing pair with $\theta(X',Y)\geq \theta (X,Y)$. Besides, $(X_1^T\cup \{x\}, Y^T)$ is an extension packing pair of $(X',Y)$ with $\phi(X_1^T\cup \{x\}, Y^T)= \phi(X^T,Y^T)-1$ since $C$ no more exists in $G\setminus (X_1^T\cup\{x\},Y^T)$, a contradiction.

Thus, $G''$ is bipartite. Color the vertices of $Y^T$ by 4 if $i=0$ and by 3 if $i=1$, that of $X^T$ by 2, and the remaining vertices by $1_a$ and $1_b$. The obtained coloring is a $(1,1,2,4)$-packing coloring if $i=0$ and $(1,1,2,3)$-packing coloring if $i=1$.
\end{proof}

Finally, we study the $(1,1,3,3,3)$-packing colorability of $(3,2)$-saturated subcubic graphs.
\begin{theorem}
   Every $(3,2)$-saturated subcubic graph such that $g_3(G)=3$ is $(1,1,3,3,3)$-packing colorable.
\end{theorem}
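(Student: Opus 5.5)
We follow the strategy of the preceding theorem: find a $3$-packing $Y$ (to be split among the three colors $3_a,3_b,3_c$, so really three $3$-packings $Y_a,Y_b,Y_c$) such that $G-(Y_a\cup Y_b\cup Y_c)$ is bipartite. The bipartite remainder then receives $1_a$ and $1_b$. Since $g_3(G)=3$, every $3$-vertex lies in a triangle, so the first task is to destroy all triangles and then all remaining odd cycles. We assume $G$ is connected.

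\emph{Step 1: local structure.} In a $(3,2)$-saturated graph no heavy vertex has three heavy neighbours. Every triangle therefore contains a vertex that is either a $2$-vertex or a non-heavy $3$-vertex, and every $3$-vertex lies in a triangle. We classify triangles: those containing a $2$-vertex, those formed by three $3$-vertices, and diamonds, i.e.\ rich vertices as in the previous proof. In each triangle we designate a preferred vertex (a $2$-vertex if one exists, else a non-heavy vertex). We also note that two vertices at distance at most $3$ can lie in a common triangle only if the local configuration forces many heavy vertices. This keeps small the conflict graph $H$ on triangles, with two triangles adjacent when their preferred vertices are at distance $\le 3$.

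\emph{Step 2: choose the $3$-colored set by an extremal argument.} Take a triple $(Y_a,Y_b,Y_c)$ of pairwise disjoint $3$-packings, each vertex of which lies in an odd cycle and each triangle of which has at most one chosen vertex. Maximize a weighted sum where $2$-vertices and rich vertices weigh most and heavy vertices weigh least, as with $\theta$ before. Then minimize the number of triangles left uncovered. To reach a contradiction, assume a triangle $T$ is uncovered. Each vertex of $T$ then lies within distance $3$ of a vertex in each of $Y_a,Y_b,Y_c$. Using the degree bound, there are at most about $3\cdot 2^{2}$ vertices within distance $3$ of a vertex of $T$ through non-triangle paths. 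Combined with the saturation structure and the fact that chosen vertices sit in triangles, the blocking vertices should be forced to be heavy or few. A swap, replacing a blocking vertex by a vertex of $T$ or by a $2$-vertex of its own triangle, then increases the weight or decreases the number of uncovered triangles, exactly as in Claims of the previous theorem.

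\emph{Step 3: remaining odd cycles.} As before, we pass to an extension $(Y_a^T,Y_b^T,Y_c^T)$ that additionally hits long odd cycles of the triangle-free remainder, choosing to minimize their number. On a surviving odd cycle $C$, every $3$-vertex has its triangle partner in the chosen set, so $C$ contains a $2$-vertex $z$ adjacent to a $3$-vertex. Analogues of Claim~\ref{nb} should show $z$ is at distance $\ge 4$ from one of the three classes, which removes $C$. With three classes available, this should be easier than the $(1,1,2,4)$ case. The remaining degenerate case $V(G)=N[C]$ is colored by hand.

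The main obstacle is Step 2 in the situation where a triangle of three $3$-vertices has two heavy vertices, which $(3,2)$-saturation permits. Here the $3$-packings become congested, and the swaps must be chosen carefully. Our first attempt would be a greedy BFS choice of representatives, in the style of the $D_2$ construction in Lemma~\ref{lemma 2-sat}, distributing the three colors along the BFS layers.
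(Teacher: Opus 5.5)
Your overall framework matches the paper's: split a set into three $3$-packings so that the rest is bipartite, first hitting every triangle and then one $2$-vertex on each remaining odd cycle. Your conflict graph $H$ is essentially the paper's $G^3[T]$. However, the two steps that carry all the content are only asserted, so there is a genuine gap.

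\textbf{Step 2.} The phrases ``the blocking vertices should be forced to be heavy or few'' and ``a swap then increases the weight'' are exactly what has to be proved. There is also little reason to expect one-for-one exchanges to suffice. Distributing triangle representatives among three $3$-packings is a $3$-coloring problem for $H$, and that problem is global. When $G^3[T]=K_4$, no $3$-coloring of that transversal exists. The paper must then choose different representatives, for instance giving $3_a$ to both $x$ and $y_2$.

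The missing idea is the paper's construction:
\begin{itemize}
\item Fix a transversal $T$ of the triangles that prefers $2$-vertices, and then non-heavy $3$-vertices.
\item Prove degree bounds in $G^3[T]$: a $2$-vertex has degree at most $2$, a non-heavy $3$-vertex degree at most $3$, and a heavy vertex degree at most $4$, with three of its neighbors being $2$-vertices of $T$.
\item Show that a $K_4$ in $G^3[T]$ forces $G^3[T]=K_4$, and color that case by hand.
\item Otherwise, delete the low-degree neighbors of heavy vertices, apply Brooks' theorem, and extend the coloring.
\end{itemize}
You introduce $H$ but never bound its degree or color it. Your Step~1 claim, meant to keep $H$ small, is false. $(3,2)$-saturation allows a triangle of three heavy vertices, each with a non-heavy third neighbor. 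This is precisely the case that produces degree-$4$ vertices in $G^3[T]$.

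\textbf{Step 3.} Your appeal to ``analogues'' of the earlier claim hides the actual input. In the paper, a chosen $2$-vertex $x$ on an odd cycle of $R=G-T$ lies within distance $3$ of at most two vertices of $T$. Chosen vertices on distinct odd cycles are pairwise at distance at least $4$. Hence a free color among $3_a,3_b,3_c$ always exists, and the added vertices never conflict with each other.

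Both counts use that both neighbors of $x$ are $3$-vertices, whose triangle representatives in $T$ are non-heavy by the choice of $T$. This needs a separate reduction showing that a minimal counterexample has no two adjacent $2$-vertices. The paper proves it by deleting $u$, adding the edge $u_1v$, and doing a recoloring case analysis.

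Your proposal has neither this reduction nor the two counts. Your extremal formulation would also have to show that the new vertex $z$ avoids the previously added odd-cycle vertices of its class. As written, the proposal is a plan whose two essential steps are unproved.
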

\begin{proof}
On the contrary, let $G$ be a counter-example of minimum order. Obviously, $G$ is connected.
\begin{claim}\label{claim (3,2)-sat 1}
We have $\delta(G)=2$.
\end{claim}
\begin{proof}
    Suppose to the contrary that there exists a vertex $u$ in $G$ such that $d(u)= 1$. The graph $G-u$ is $(3,2)$-saturated subcubic such that $g_3(G-u)=3$. Then, $G-u$ is $(1,1,3,3,3)$-packing colorable. Hence, either $1_a$ or $1_b$ is not assigned to the neighbor of $u$. By coloring $u$ using the appropriate color $1_a$ or $1_b$ we obtain a $(1,1,3,3,3)$-packing coloring of $G$, a contradiction.
\end{proof}

\begin{claim}\label{claim (3,2)-sat 2}
    There are no adjacent two $2$-vertices in $G$.
\end{claim}
\begin{proof}
    Suppose to the contrary that there exist two $2$-vertices $u,v\in V(G)$ that are adjacent. Let $u_1$ (resp., $v_1$) be the other neighbor of $u$ (resp., $v$). Let $u_2$ and $u_3$ (resp., $v_2$ and $v_3$) be the other neighbors of $u_1$ (resp., $v_1$) when they exist. As $g_3(G)=3$, $u_1,u_2,u_3$ (resp., $v_1,v_2,v_3$) form a cycle when they exist. Let $u_4$ (resp., $u_5$) be the other neighbor of $u_2$ (resp., $u_3$). Consider the graph $G'$ obtained by deleting $u$ from $G$ and adding an edge between $u_1$ and $v$. Clearly, $G'$ is $(3,2)$-saturated subcubic such that $g_3(G')=3$. Consider a $(1,1,3,3,3)$-packing coloring of $G'$. If one of the colors $1_a$ or $1_b$ is not assigned to neither of the vertices $u_1$ nor $v$, by coloring $u$ using this color we obtain a $(1,1,3,3,3)$-packing coloring of $G$, which is a contradiction. So, we may assume that $u_1$ is colored by $1_a$ and $v$ is colored by $1_b$. If $1_b$ is not assigned to neither of the vertices $u_2$ and $u_3$, then we can recolor $u_1$ by $1_b$ and $u$ by $1_a$ to obtain a $(1,1,3,3,3)$-packing coloring of $G$, which is a contradiction. So, we may assume that $u_2$ is colored by $1_b$ and $u_3$ is colored by $3_a$. Similarly, we may assume that $v_1$, $v_2$, and $v_3$ are colored by $1_a$, $1_b$, and $3_b$, respectively. If $u_4$ is not colored by $1_a$, then we can recolor $u_2$ by $1_a$, $u_1$ by $1_b$, and $u$ by $1_a$ to obtain a contradiction. Hence, $u_4$ is colored by $1_a$. If $u_5$ is colored by $3_k$, for some $k\in \{b,c\}$, then we can recolor $u_3$ by $1_a$, $u_1$ by $3_a$, and $u$ by $1_a$ to obtain a contradiction. Thus, $u_5$ is colored by $1_k$, for some $k\in \{a,b\}$. Therefore, neither of the vertices that are at a distance of at most $3$ from $u$ is colored by $3_c$. By assigning $3_c$ to $u$ we obtain a $(1,1,3,3,3)$-packing coloring of $G$, a contradiction.
\end{proof}\\
\\
Let $T$ be a set of vertices that contains exactly one vertex from each triangle in $G$ and suppose that $T$ is chosen with minimum cardinality. That is, every vertex in $T$ belongs to a triangle in $G$ and neither of the other vertices of this triangle belongs to $T$. In addition, suppose that $T$ is chosen with the maximum number of $2$-vertices; among all choices, $T$ is chosen with the maximum number of non-heavy $3$-vertices. Let $R$ be the sungraph induced by $V(G)\setminus T$. Clearly, $\Delta(R)\leq 2$. Our goal is to prove that the vertices of $T$ can be colored using the colors $3_a,3_b,3_c$. Then, we will prove that in every odd cycle in $R$, there exists a vertex that is at a distance less than four from at most two vertices in $T$. Moreover, such vertices in every odd cycle are at a distance of at least four from each other. Then, we will color the vertices of $R$ using the colors $1_a,1_b,3_k$, where $3_k$ is used for one vertex from each odd cycle. Therefore, the $3_k$ color will be properly chosen from $3_a,3_b,3_c$ in order to obtain a $(1,1,3,3,3)$-packing coloring of $G$, which is a contradiction.\\
We will start by studying the relations between the vertices in $G^3[T]$.

\begin{claim}\label{claim (3,2)-sat 3}
    If $x\in T$ is a $2$-vertex, then $d_{G^3[T]}(x)\leq 2$.
\end{claim}
\begin{proof}
Let $x_1$ and $x_2$ be the neighbors of $x$. By Claim \ref{claim (3,2)-sat 2}, $x_1$ and $x_2$ are $3$-vertices. Let $y_1$ (resp., $y_2$) be the neighbor of $x_1$ (resp., $x_2$) other than $x$ and $x_2$ (resp., $x_1$). Let $i\in \{1,2\}$. If $y_i$ is a $3$-vertex, then $y_i$ belongs to a triangle, which at most one of its vertices belongs to $T$. Similarly, if $y_i$ is a $2$-vertex, then the neighbor of $y_i$ other than $x_i$ belongs to a triangle, which at most one of its vertices belongs to $T$. Hence, there is at most one vertex $z_i$ in $T$ at a distance of at most $3$ from $x$ such that the shortest $xz_i$-path passes through $y_i$. Thus, $d_{G^3[T]}(x)\leq 2$.
\end{proof}

\begin{claim}\label{claim (3,2)-sat 4}
    If $x\in T$ is a non-heavy $3$-vertex, then $d_{G^3[T]}(x)\leq 3$.
\end{claim}
\begin{proof}
Let $x_1,x_2,x_3$ be the neighbors of $x$ such that $x_1$ and $x_2$ are adjacent. As $T$ is chosen with the maximum number of $2$-vertices, $x_1$ and $x_2$ are $3$-vertices. As $x$ is not heavy, $x_3$ is a $2$-vertex. Let $i\in \{1,2,3\}$. As in Claim \ref{claim (3,2)-sat 3}, at most one vertex in $T$ such that the shortest path from $x$ to this vertex passes through $x_i$ is at a distance of at most $3$ from $x$. Thus, $d_{G^3[T]}(x)\leq 3$.
\end{proof}

\begin{remark}\label{remark (3,2)-sat}
    Let $x\in T$ be a non-heavy $3$-vertex such that $d_{G^3[T]}(x)=3$. Let $x_1,x_2,x_3$ be the neighbors of $x$ in $G$ and $y,z,w$ be the neighbors of $x$ in $G^3[T]$. Suppose that the shortest $xy$-path (resp., $xz$-path, $xw$-path) passes through $x_1$ (resp., $x_2$, $x_3$). Denote by $y_1$ (resp., $z_1,w_1$) the vertex in the triangle containing $y$ (resp., $z,w$) that is the closest, among the vertices of the triangle, to $x$. Note that $y_1$ (resp., $z_1$, $w_1$) may be $y$ (resp., $z$, $w$). Then, the shortest $x_1y_1$-path (resp., $x_2z_1$-path) is either an edge or a path of length two, where the second vertex is a $2$-vertex. Moreover, the shortest $x_3w_1$-path is an edge. 
\end{remark}

\begin{claim}\label{claim (3,2)-sat 5}
    If $x\in T$ is a heavy vertex, then $d_{G^3[T]}(x)\leq 4$. Moreover, three of the vertices that are adjacent to $x$ in $G^3[T]$ are $2$-vertices.
\end{claim}
\begin{proof}
Let $x_1,x_2,x_3$ be the neighbors of $x$ such that $x_1$ and $x_2$ are adjacent. As $T$ is chosen with the maximum number of $2$-vertices, $x_1$ and $x_2$ are $3$-vertices. Similarly, as $T$ is chosen with the maximum number of non-heavy $3$-vertices, $x_1$ and $x_2$ are heavy. Thus, $x_3$ is not heavy. Let $u$ and $v$ be the neighbors of $x_3$ other than $x$, such that $u$ is a $2$-vertex. Hence, $u\in T$ and $dist_G(x,u)=2$. Moreover, the third neighbor of $v$, which may be in $T$, is at a distance $3$ from $x$. On the other hand, let $y_1$ (resp., $y_2$) be the third neighbor of $x_1$ (resp., $x_2$). Let $i\in \{1,2\}$. Again, $y_i$ is not heavy, then the triangle containing $y_i$ contains a $2$-vertex called $z_i$. We have $dist_G(x,z_i)=3$. The result follows.
\end{proof}

\begin{claim}\label{claim (3,2)-sat 6}
    $G^3[T]$ contains a complete subgraph of order $4$ if and only if $G^3[T]=K_4$.
\end{claim}
\begin{proof}
Suppose that $G^3[T]$ contains a complete subgraph of order $4$ and let $x,y,z,w$ be its vertices. By Claims \ref{claim (3,2)-sat 3}, \ref{claim (3,2)-sat 4}, and \ref{claim (3,2)-sat 5}, $x,y,z,w$ are non-heavy $3$-vertices. Applying Remark \ref{remark (3,2)-sat} to $x,y,z,w$, we can deduce $T=\{x,y,z,w\}$. The result follows.
\end{proof}\\
\\
First, suppose that $G^3[T]=K_4$.
\begin{assertion}
    $G$ is $(1,1,3,3,3)$-packing colorable.
\end{assertion}
\begin{proof}
This case is represented in Figure \ref{figure 1}. Suppose that the red $x_1y_1$-path is of length $2$. Color $x$ and $y_2$ by $3_a$, $z_2$ by $3_b$, and $w_2$ by $3_c$. The remaining vertices form a bipartite graph, so we can color them by $1_a$ and $1_b$. Now, assume that the red $x_1y_1$-path and $x_2z_1$-path are of length one. As $G$ is $(3,2)$-saturated, either $y_2$ or $y_3$ is not heavy. Assume that $y_3$ is not heavy. Hence, the red $y_3z_2$-path is of length $2$. Color $x$ by $3_a$, $y_1$ and $z_3$ by $3_b$, and $w_1$ by $3_c$. Again, we can color the remaining vertices by $1_a$ and $1_b$. The last case is when $y_3$ is heavy. Hence, $z_2$ is heavy and $z_3$ is not heavy. This case is similar to the previous one.
\end{proof}
\begin{figure}[h!]
\centering
\begin{tikzpicture}[
    node/.style={circle, draw, fill=gray!10, minimum size=1mm},
    edge/.style={thick},
    green edge/.style={thick, draw=green},
    red edge/.style={thick, draw=red},
    blue edge/.style={thick, draw=blue},
    black edge/.style={thick, draw=black},
    dotted edge/.style={thick, dotted},
    weight/.style={font=\bfseries\small}
]

\node[node,label={above:$x$}] (x) at (0, 1) {};

\node[node,label={above:$x_1$}] (x1) at (-2, -1) {};
\node[node,label={above:$x_2$}] (x2) at (2, -1) {};
\node[node,label={above:$x_3$}] (x3) at (6, -1) {};

\node[node,color={red},fill=red!10] (u1) at (-2, -2.5) {};
\node[node,color={red},fill=red!10] (u2) at (2, -2.5) {};

\node[node,label={left:$y_1$}] (y1) at (-2, -4) {};
\node[node,label={left:$z_1$}] (z1) at (2, -4) {};
\node[node,label={left:$w_1$}] (w1) at (6, -4) {};

\node[node,label={above:$y_2$}] (y2) at (-3, -5.5) {};
\node[node,label={above:$y_3$}] (y3) at (-1, -5.5) {};
\node[node,label={above:$z_2$}] (z2) at (1, -5.5) {};
\node[node,label={above:$z_3$}] (z3) at (3, -5.5) {};
\node[node,label={above:$w_2$}] (w2) at (5, -5.5) {};
\node[node,label={above:$w_3$}] (w3) at (7, -5.5) {};

\node[node,color={red},fill=red!10] (u3) at (0, -5.5) {};
\node[node,color={red},fill=red!10] (u4) at (4, -5.5) {};
\node[node,color={red},fill=red!10] (u5) at (2, -7.2) {};

\draw (x) -- (x1);
\draw (x) -- (x2);
\draw (x) -- (x3);
\draw (x1) -- (x2);

\draw (y1) -- (y2);
\draw (y2) -- (y3);
\draw (y3) -- (y1);

\draw (z1) -- (z2);
\draw (z2) -- (z3);
\draw (z3) -- (z1);

\draw (w1) -- (w2);
\draw (w2) -- (w3);
\draw (w3) -- (w1);

\draw (x3) -- (w1);

\draw[red edge] (x1) -- (u1);
\draw[red edge] (u1) -- (y1);

\draw[red edge] (x2) -- (u2);
\draw[red edge] (u2) -- (z1);

\draw[red edge] (y3) -- (u3);
\draw[red edge] (u3) -- (z2);

\draw[red edge] (z3) -- (u4);
\draw[red edge] (u4) -- (w2);

\draw[red edge] (y2) to[out=-45, in=180] (u5);
\draw[red edge] (u5) to[out=0, in=-135] (w3);

\end{tikzpicture}

\caption{The red paths are either of length one or two. The vertex $y$ (resp., $z$, $w$) is one of the vertices $y_i$ (resp., $z_i$, $w_i$).}\label{figure 1}
\end{figure}
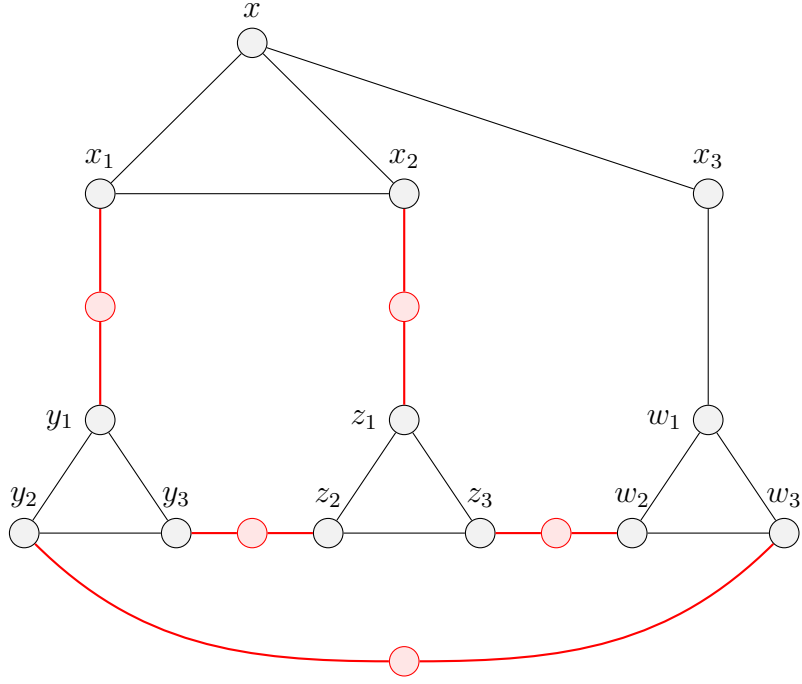\\
\\
So, we may assume that $G^3[T]$ does not contain a complete subgraph of order $4$.

\begin{claim}\label{claim (3,2)-sat 7}
    The graph $G^3[T]$ is $3$-colorable.
\end{claim}
\begin{proof}
Let $H$ be the set of heavy vertices in $T$ (they belong to $T$ but are heavy in $G$). By Claim \ref{claim (3,2)-sat 5}, every vertex in $H$ is adjacent in $G^3[T]$ to three vertices of degree $2$ in $G^3[T]$. Let $H'$ be the set of vertices of degree $2$ in $G^3[T]$ that are adjacent in $G^3[T]$ to a vertex in $H$. As $G^3[T]-H'$ has maximum degree $3$ and no complete subgraph of order $4$, by Brooks' Theorem, $G^3[T]-H'$ is $3$-colorable. A $3$-coloring of $G^3[T]-H'$ can be easily extended to $G^3[T]$ since a 2-vertex in  $G^3[T]$ has at most two neighbors in  $G^3[T]$.
\end{proof}\\
\\
Let $C$ be an odd cycle in $R$. Obviously, $C$ is of order of at least $5$. Moreover, $C$ contains a $2$-vertex. Let $X$ be a set formed by taking exactly one $2$-vertex from every odd cycle in $R$. 

\begin{claim}\label{claim (3,2)-sat 8}
    Let $x\in X$. At most two vertices in $T$ are at a distance less than $4$ from $x$.
\end{claim}
\begin{proof}
Let $u$ and $v$ be the neighbors of $x$. By Claim \ref{claim (3,2)-sat 2}, $u$ and $v$ are $3$-vertices. Let $u_1$ and $u_2$ (resp., $v_1$ and $v_2$) be the other neighbors of $u$ (resp., $v$). Then, $u_1u_2,v_1v_2\in E(G)$. Suppose that $u_1$ and $v_1$ belong to the odd cycle in $R$ that contains $x$. Then, $u_2,v_2\in T$. Hence, $u_2$ and $v_2$ are non-heavy vertices. Thus, $u_2$ (resp., $v_2$) is not adjacent to a vertex in $T$. Moreover, the third neighbor of $u_1$ (resp., $v_1$) belongs to the odd cycle in $R$ that contains $x$. That is, it does not belong to $T$. The result follows.
\end{proof}

\begin{claim}\label{claim (3,2)-sat 9}
    Let $x,y\in X$. Then, $dist_G(x,y)\geq 4$.
\end{claim}
\begin{proof}
Let $u,v,u_1,u_2,v_1,v_2$ be the vertices defined in Claim \ref{claim (3,2)-sat 8}. Clearly, $y\notin \{u,v,u_1,\\u_2,v_1,v_2\}$. Let $u_3$ (resp., $v_3$) be the third neighbor of $u_2$ (resp., $v_2$). Now, $u_3$ and $v_3$ are $2$-vertices. Then, $d_R(u_3)\leq 1$ and $d_R(v_3)\leq 1$. Thus, $y\notin \{u_3,v_3\}$. The result follows.
\end{proof}\\
\\
Now, color the vertices of $T$ by $3_a,3_b,3_c$. Note that, $R-X$ is bipartite. So, color the vertices of $R-X$ by $1_a$ and $1_b$. Finally, for each $x\in X$, there exists a color $3_k\in \{3_a,3_b,3_c\}$ that can be properly assigned to $x$. Then a $(1,1,3,3,3)$-packing coloring of $G$ is obtained, a contradiction.
\end{proof}

\section{Sharpness of the Results and Open Problems}

We proved that a $0$-saturated subcubic graph $G$ such that $g_3(G)\leq 4$ admits both a  $(1,2,2,3)$-packing coloring and a $(2,2,2,2,3)$-packing coloring, except $\mathcal{G}_1$. Both results are sharp in the sense that there are such graphs that are not $(1,2,2)$-packing colorable and others that are not $(2,2,2,2)$-packing colorable (Proposition \ref{prop 1}).

\begin{proposition}\label{prop 1}
    The graph $\mathcal{G}_1$ (resp., $\mathcal{G}_4$) is not $(1,2,2)$-packing colorable (resp., $(2,2,2,2)$-packing colorable).
\end{proposition}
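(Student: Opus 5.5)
The plan is to treat the two graphs separately, using in each case a counting argument based on the structure of the color classes. In a $(1,2,2)$-packing coloring, the class of color $1$ is an independent set. Each class of color $2$ is a $2$-packing, so any two of its vertices are at distance at least $3$. Hence if $\operatorname{diam}(G)\le 2$, every $2$-class has at most one vertex, and $G$ is $(1,2,2)$-packing colorable only if $\alpha(G)\ge |V(G)|-2$. Likewise, $G$ is $(2,2,2,2)$-packing colorable only if $|V(G)|\le 4\rho_2(G)$, where $\rho_2(G)$ is the maximum size of a $2$-packing.

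For $\mathcal{G}_1$ I would first fix its structure. It is the exceptional graph arising in Case~2 of the proof of Theorem~\ref{theorem 1}: it is obtained from $C_5=v_1v_2v_3v_4v_5v_1$ by adding a vertex $x$ adjacent to $v_1$ and $v_3$. This graph is $0$-saturated, since its only $3$-vertices $v_1,v_3$ are nonadjacent, and it has local girth $4$ at its $3$-vertices. I would then check that $\operatorname{diam}(\mathcal{G}_1)=2$. The only pairs needing attention are those involving $x$ or $v_2$, and these are handled by the paths $x v_3 v_4$, $x v_1 v_5$, $v_2 v_3 v_4$, $v_2 v_1 v_5$ and $x v_1 v_2$. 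So each of the two $2$-classes contains at most one vertex, and color $1$ would have to cover at least $4$ of the $6$ vertices. But $\alpha(\mathcal{G}_1)=3$: the edge $v_4v_5$ allows at most one of $v_4,v_5$, and among $\{x,v_1,v_2,v_3\}$, which induces a $4$-cycle $x v_1 v_2 v_3$, at most two vertices are independent. Hence $3+1+1<6$, so no $(1,2,2)$-packing coloring exists.

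For $\mathcal{G}_4$ I would apply the same scheme with four $2$-classes. First I would read off $\mathcal{G}_4$ from the figure and compute $\rho_2(\mathcal{G}_4)$ by exhibiting, for each vertex, its ball of radius $2$. If $|V(\mathcal{G}_4)|>4\rho_2(\mathcal{G}_4)$, for instance when $\mathcal{G}_4$ has diameter $2$ and more than $4$ vertices, we are done immediately. If the count is not decisive, I would argue locally instead. Take a $3$-vertex $x$: its closed neighborhood $N[x]$ has diameter at most $2$, so its four vertices receive the four distinct colors $2_a,2_b,2_c,2_d$. Then propagate this forced coloring along the short cycle (of length at most $4$) through $x$. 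Each new vertex is within distance $2$ of vertices already colored with all but at most one color, so its color is forced, until some vertex is within distance $2$ of all four colors, which is a contradiction. The symmetry of $\mathcal{G}_4$ should reduce this to one or two cases.

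The main obstacle is the $\mathcal{G}_4$ part. Its exact structure is only given in the figure, and if the simple counting bound fails, the forcing argument must be organized carefully so that no branch of the case analysis is missed.
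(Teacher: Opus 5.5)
Your treatment of $\mathcal{G}_1$ is correct and is exactly the paper's argument. Your reconstruction of the graph from Case~2 matches the figure: there, vertex $6$ is joined to vertices $3$ and $5$ of the $5$-cycle $1\,2\,3\,4\,5$. Diameter $2$ forces each $2$-class to be a singleton. You then bound $\alpha(\mathcal{G}_1)\le 3$ by splitting the vertices into the edge $v_4v_5$ and the induced $4$-cycle $xv_1v_2v_3$; this justifies what the paper only asserts. Hence at most $5$ of the $6$ vertices can be colored.

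The $\mathcal{G}_4$ half is not yet a proof. You never determine the graph, so what you give is a conditional dichotomy plus a vague forcing fallback. The missing step is reading the structure off the figure. $\mathcal{G}_4$ has vertices $x_1,\dots,x_5$, and $x_1$ and $x_5$ are each adjacent to $x_2,x_3,x_4$, so $\mathcal{G}_4=K_{2,3}$. Any two of its vertices are adjacent or have a common neighbour, so it has diameter $2$ and $\rho_2(\mathcal{G}_4)=1$. Four singleton $2$-classes then cover at most $4<5$ vertices. This is exactly the first branch of your own criterion $|V|>4\rho_2$, and it is the paper's one-line argument. The local forcing analysis you flagged as the main obstacle is never needed.
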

\begin{proof}
The diameter of $\mathcal{G}_1$ is $2$, then at most one vertex can be colored by a color $2$. Moreover, in $\mathcal{G}_1$, there are no independent set of order $4$. Thus, at most five vertices can be colored using the colors $1,2_a,2_b$. Therefore, $\mathcal{G}_1$ is not $(1,2,2)$-packing colorable. Similarly, at most four vertices of $\mathcal{G}_4$ can be colored using four different colors $2$. Then, $\mathcal{G}_4$ is not $(2,2,2,2)$-packing colorable.
\end{proof}
\begin{figure}[h!]
\centering
\begin{tikzpicture}[
    node/.style={circle, draw, fill=gray!10, minimum size=1mm},
    edge/.style={thick},
    green edge/.style={thick, draw=green},
    red edge/.style={thick, draw=red},
    black edge/.style={thick, draw=black},
    dotted edge/.style={thick, dotted},
    weight/.style={font=\bfseries\small}
]

\node[node,label={left:$1$}] (1) at (1.5,0) {};
\node[node,label={right:$2_a$}] (2) at (2.5,0) {};
\node[node,label={right:$2_b$}] (3) at (3,1) {};
\node[node,label={above:$1$}] (4) at (2,1) {};
\node[node,label={left:$3$}] (5) at (1,1) {};
\node[node,label={above:$1$}] (6) at (2,2) {};

\draw (1) -- (2);
\draw (2) -- (3);
\draw (3) -- (4);
\draw (4) -- (5);
\draw (5) -- (1);
\draw (5) -- (6);
\draw (3) -- (6);

\node at (2, -1) {$\mathcal{G}_1$};


\node[node,label={left:$1,2_c$}] (x1) at (6,0.7) {};
\node[node,label={left:$2_a$}] (x2) at (6,1.7) {};
\node[node,label={above:$1,2_b$}] (x3) at (7,2.4) {};
\node[node,label={right:$3$}] (x4) at (8,1.7) {};
\node[node,label={right:$1,2_d$}] (x5) at (8,0.7) {};
\node[node,label={above:$2_b$}] (x6) at (7,0) {};

\draw (x1)--(x2);
\draw (x2)--(x3);
\draw (x2)--(x4);
\draw (x3)--(x4);
\draw (x4)--(x5);
\draw (x5)--(x6);
\draw (x6)--(x1);

\node at (7, -1) {$\mathcal{G}_2$};


\node[node,label={left:$2_a$}] (y1) at (11,0) {};
\node[node,label={left:$2_b$}] (y2) at (11,1) {};
\node[node,label={above:$1$}] (y3) at (12,0.5) {};
\node[node,label={above:$3$}] (y4) at (13,0.5) {};
\node[node,label={right:$2_a$}] (y5) at (14,0) {};
\node[node,label={right:$2_b$}] (y6) at (14,1) {};

\draw (y1) -- (y2);
\draw (y2) -- (y3);
\draw (y3) -- (y1);
\draw (y3) -- (y4);
\draw (y4) -- (y5);
\draw (y5) -- (y6);
\draw (y6) -- (y4);

\node at (12.5, -1) {$\mathcal{G}_3$};

\end{tikzpicture}
\caption{The graph $\mathcal{G}_1$: a $0$-saturated subcubic graph such that $g_3(\mathcal{G}_1)=4$ that is neither $(1,2,2)$-packing colorable nor $(2,2,2,2,2)$-packing colorable.\\
The graph $\mathcal{G}_2$: a $1$-saturated subcubic graph such that $g_3(\mathcal{G}_2)=3$ that is not $(1,2,2)$-packing colorable.\\
The graph $\mathcal{G}_3$: a $1$-saturated subcubic graph such that $g_3(\mathcal{G}_3)=3$.}\label{figure 2}
\end{figure}
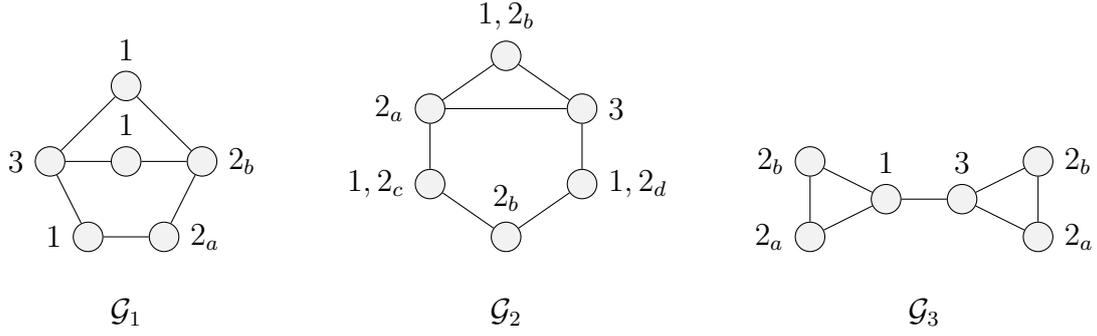

We proved that a $0$-saturated subcubic graph $G$ such that $g_3(G)\leq 4$ has $\chi_{\rho}(G)\leq 6$. We did not find such a graph $G$ such that $\chi_{\rho}(G)\geq 5$. Thus, we conjecture the following.

\begin{conjecture}
        If $G$ is a $0$-saturated subcubic graph such that $g_3(G)\leq 4$, then $\chi_{\rho} (G)\leq 4$.
\end{conjecture}

We proved that a $1$-saturated subcubic graph $G$ such that $g_3(G)=3$ is $(1,1,3,k)$-packing colorable for all $k\geq 3$. The result is sharp in the sense that there are such graphs that are not $(1,1,3)$-packing colorable (Proposition \ref{prop 2}) and there are such graphs that are not $(1,2,2)$-packing colorable (Proposition \ref{prop 3}).

\begin{proposition}\label{prop 2}
    The graph $\mathcal{G}_5$ is not $(1,1,3)$-packing colorable.
\end{proposition}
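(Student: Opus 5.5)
The plan is a counting argument based on odd cycles. In a $(1,1,3)$-packing coloring of $\mathcal{G}_5$ with colors $1_a,1_b,3$, the two classes $1_a$ and $1_b$ are independent sets. Hence the subgraph induced by the vertices \emph{not} colored $3$ must be bipartite. The vertices colored $3$ form a $3$-packing, so any two of them are at distance at least $4$. The proof will play these two facts against each other: every odd cycle of $\mathcal{G}_5$ must contain a vertex colored $3$, while $\mathcal{G}_5$ is too small, or too tightly packed, to contain enough pairwise far-apart such vertices.

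The steps are as follows. First, from the explicit drawing of $\mathcal{G}_5$ (a $1$-saturated subcubic graph with $g_3=3$), I will list its odd cycles; these are presumably triangles, say $T_1,\dots,T_r$ with $r\ge 2$, each containing a $3$-vertex because $g_3=3$. Second, I will check that these triangles are pairwise vertex-disjoint, or more generally that no single vertex meets all odd cycles. I will also check that every vertex of $T_i$ is at distance at most $3$ from every vertex of $T_j$ for $i\neq j$; if $\mathcal{G}_5$ has diameter at most $3$, this is immediate. Third, I will conclude the argument. Every odd cycle must receive color $3$ on one of its vertices, so at least two vertices colored $3$ would be needed, one in $T_i$ and one in $T_j$. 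They would be at distance at most $3$, which contradicts the $3$-packing condition. If the odd cycles cannot simply be read off as disjoint triangles (for instance, if a $5$-cycle also occurs), I will use a small case analysis instead. Fix which vertex $v$ of the first triangle receives color $3$. Then the ball of radius $3$ around $v$ contains no other vertex colored $3$, and I will exhibit an odd cycle inside $\mathcal{G}_5$ minus $v$ lying entirely within that ball.

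The main obstacle is the second step: verifying the distance and disjointness properties for the specific structure of $\mathcal{G}_5$, which is given only by its figure. If $\mathcal{G}_5$ does not have diameter at most $3$, the uniform argument fails. In that case I would first try the case analysis on the position of the first vertex colored $3$, using the symmetry of $\mathcal{G}_5$ to reduce the number of cases. For each case, I would show that the odd cycles surviving outside the radius-$3$ ball cannot all be hit by a second $3$-colored vertex that is compatible with the first.
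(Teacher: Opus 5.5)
\textbf{There is a genuine gap: your main line rests on a false picture of $\mathcal{G}_5$, and it misses the odd cycle that actually forces the contradiction.}

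From the figure, $\mathcal{G}_5$ is the $12$-cycle $1\,2\cdots 12$ with chords $2\,12$, $4\,6$, $8\,10$. Its triangles are $\{12,1,2\}$, $\{4,5,6\}$ and $\{8,9,10\}$.
\begin{itemize}
\item Its diameter is $5$; for example $\mathrm{dist}(1,7)=5$.
\item The three $2$-vertices $1,5,9$ of the triangles are pairwise at distance exactly $4$. So $\{1,5,9\}$ is a $3$-packing meeting every triangle.
\end{itemize}
Hence no check of distances between triangles can give a contradiction: the triangles alone are not an obstruction. Your fallback fails at the same point. For $v=1$ the radius-$3$ ball is $\{1,2,3,4,10,11,12\}$, and removing $v$ leaves the path $4\,3\,2\,12\,11\,10$, which has no odd cycle. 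Also, ``a second $3$-colored vertex'' undercounts: three such vertices are already needed just for the triangles.

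The missing idea is the odd cycle using all three chords, the $9$-cycle $C_9=2\,3\,4\,6\,7\,8\,10\,11\,12$, which your proposal never identifies. The paper's argument runs as follows.
\begin{itemize}
\item Each triangle contains exactly one vertex colored $3$, and these three vertices are pairwise at distance at least $4$.
\item This forces them to be $1,5,9$. Vertex $2$ is within distance $3$ of all of $\{4,5,6\}$, vertex $12$ is within distance $3$ of all of $\{8,9,10\}$, and the other triangles behave symmetrically.
\item So none of them lies on $C_9$, and $C_9$ needs a fourth vertex colored $3$.
\item But every vertex of $C_9$ is within distance $2$ of one of $1,5,9$, a contradiction.
\end{itemize}

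Alternatively, your ball idea works if you anchor it on $C_9$ instead of on a triangle. Up to the dihedral symmetry of $\mathcal{G}_5$, the vertex of $C_9$ colored $3$ is $2$ or $3$. In both cases its radius-$3$ ball contains the whole triangle $\{4,5,6\}$, which then cannot receive color $3$.
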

\begin{proof}
Suppose to the contrary that $\mathcal{G}_5$ is $(1,1,3)$-packing colorable. Each odd cycle must contain a vertex colored by $3$. Let $x,y,z$ be vertices in the three triangles that are colored by $3$. Clearly, neither of $x,y,z$ belongs to the cycle of order $9$. Thus, the cycle of order $9$ contains a fourth vertex colored by $3$, which is at a distance less than $3$ from one of the vertices $x,y,z$, a contradiction.
\end{proof}
\begin{figure}[h!]
\centering
\begin{tikzpicture}[
    node/.style={circle, draw, fill=gray!10, minimum size=1mm},
    edge/.style={thick},
    green edge/.style={thick, draw=green},
    red edge/.style={thick, draw=red},
    black edge/.style={thick, draw=black},
    dotted edge/.style={thick, dotted},
    weight/.style={font=\bfseries\small}
]

\node[node] (x1) at (-2,0) {};
\node[node] (x2) at (-1,1) {};
\node[node] (x3) at (-2,1) {};
\node[node] (x4) at (-3,1) {};
\node[node] (x5) at (-2,2) {};

\draw (x1) -- (x2);
\draw (x1) -- (x3);
\draw (x1) -- (x4);
\draw (x5) -- (x2);
\draw (x5) -- (x3);
\draw (x5) -- (x4);

\node at (-2, -1) {$\mathcal{G}_4$};


\node[node] (1) at (1,0) {};
\node[node] (2) at (2,0) {};
\node[node] (3) at (3,0) {};
\node[node] (4) at (4,0) {};
\node[node] (5) at (5,0) {};

\node[node] (6) at (4.5,1) {};
\node[node] (7) at (4,2) {};
\node[node] (8) at (3.5,3) {};
\node[node] (9) at (3,4) {};

\node[node] (10) at (2.5,3) {};
\node[node] (11) at (2,2) {};
\node[node] (12) at (1.5,1) {};

\draw (1) -- (2);
\draw (2) -- (3);
\draw (3) -- (4);
\draw (4) -- (5);
\draw (5) -- (6);
\draw (6) -- (7);
\draw (7) -- (8);
\draw (8) -- (9);
\draw (9) -- (10);
\draw (10) -- (11);
\draw (11) -- (12);
\draw (12) -- (1);

\draw (12) -- (2);
\draw (4) -- (6);
\draw (8) -- (10);

\node at (3, -1) {$\mathcal{G}_5$};


\node[node] (y1) at (7,0) {};
\node[node] (y2) at (8,0) {};
\node[node] (y3) at (9,0) {};
\node[node] (y4) at (10,0) {};
\node[node] (y5) at (11,0) {};

\node[node] (y6) at (10.35,1) {};
\node[node] (y7) at (9.7,2) {};
\node[node] (y8) at (9,3) {};

\node[node] (y9) at (8.3,2) {};
\node[node] (y10) at (7.65,1) {};

\draw (y1) -- (y2);
\draw (y2) -- (y3);
\draw (y3) -- (y4);
\draw (y4) -- (y5);
\draw (y5) -- (y6);
\draw (y6) -- (y7);
\draw (y7) -- (y8);
\draw (y8) -- (y9);
\draw (y9) -- (y10);
\draw (y10) -- (y1);

\draw (y10) -- (y2);
\draw (y4) -- (y6);

\node at (9, -1) {$\mathcal{G}_6$};

\end{tikzpicture}
\caption{The graph $\mathcal{G}_4$: a $0$-saturated subcubic graph such that $g_3(\mathcal{G}_4)=4$ that is not $(2,2,2,2)$-packing colorable.\\
The graph $\mathcal{G}_5$: a $1$-saturated subcubic graph such that $g_3(\mathcal{G}_5)=3$ that is not $(1,1,3)$-packing colorable.\\
The graph $\mathcal{G}_6$: a $1$-saturated subcubic graph such that $g_3(\mathcal{G}_6)=3$ that is not $(2,2,2,2)$-packing colorable.}\label{figure 3}
\end{figure}
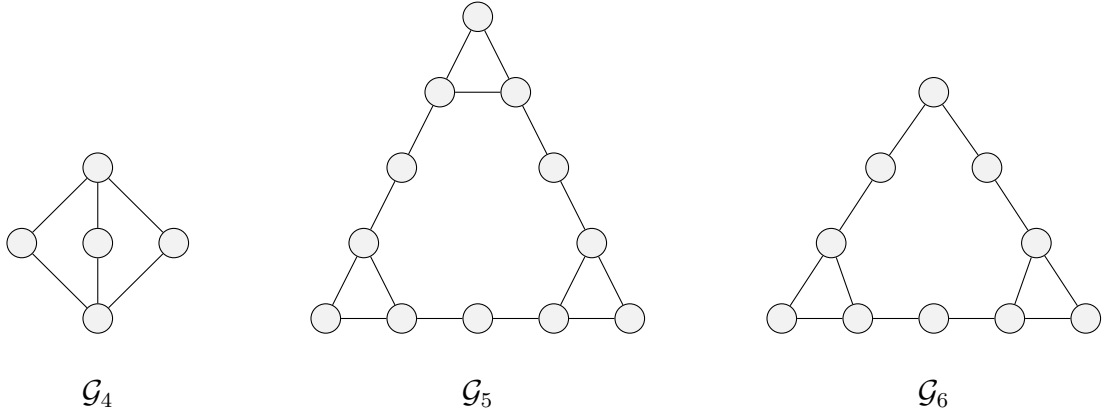

\begin{proposition}\label{prop 3}
   The graph $\mathcal{G}_2$ (resp., $\mathcal{G}_6$) is not $(1,2,2)$-packing colorable (resp., $(2,2,2,2)$-packing colorable).
\end{proposition}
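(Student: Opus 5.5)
For $\mathcal{G}_2$ and $\mathcal{G}_6$ I would use a counting argument together with a short case analysis, as in Proposition~\ref{prop 1}. Label $\mathcal{G}_2$ as in Figure~\ref{figure 2}: $x_2x_3x_4$ is a triangle and $x_4x_5x_6x_1x_2$ is a path closing it. Label $\mathcal{G}_6$ as in Figure~\ref{figure 3}: a $10$-cycle $y_1\dots y_{10}$ with chords $y_{10}y_2$ and $y_4y_6$, giving triangles $y_1y_2y_{10}$ and $y_4y_5y_6$.

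\textbf{The graph $\mathcal{G}_2$.} First I bound the color classes.
\begin{itemize}
\item[(a)] An independent set meets the triangle in at most one vertex and the path $x_5x_6x_1$ in at most two. So the class of color $1$ has at most $3$ vertices, and at least $3$ vertices must receive a color $2$.
\item[(b)] Listing distances, the only pair of vertices at distance $3$ is $\{x_3,x_6\}$; every other pair is at distance at most $2$.
\end{itemize}
By (b), the two classes $2_a,2_b$ together cover at most $3$ vertices, and they cover $3$ only if one of them is $\{x_3,x_6\}$. In that case, by (a), color $1$ must be an independent set of size $3$ inside $\{x_1,x_2,x_4,x_5\}$. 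This set induces the path $x_1x_2x_4x_5$, whose independence number is $2$. This is a contradiction, so $\mathcal{G}_2$ is not $(1,2,2)$-packing colorable.

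\textbf{The graph $\mathcal{G}_6$.} Suppose a $(2,2,2,2)$-packing coloring exists.
\begin{itemize}
\item[(a)] The three vertices of each triangle get three distinct colors.
\item[(b)] The vertex $y_3$ is within distance $2$ of all six triangle vertices $y_1,y_2,y_{10},y_4,y_5,y_6$. Hence its color, say $2_d$, appears on no triangle vertex.
\item[(c)] So each of $2_a,2_b,2_c$ contains exactly one vertex of $\{y_1,y_2,y_{10}\}$ and exactly one of $\{y_4,y_5,y_6\}$.
\end{itemize}
Next I list which cross pairs are at distance at least $3$. Since $y_2,y_3,y_4$ lie close together, most pairs fail. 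I expect only a few pairs survive, such as $(y_1,y_5)$ and $(y_{10},y_6)$, and possibly none beyond these.

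Then I try to complete the perfect matching between the two triangles required by (c). The remaining vertices $y_7,y_8,y_9$ must then be placed: each either goes into $2_d$ (which requires distance at least $3$ from $y_3$ and from the other vertices of $2_d$) or into one of $2_a,2_b,2_c$ (which requires distance at least $3$ from both triangle vertices already in that class). Because $y_7,y_8,y_9$ form a path, they need three distinct colors. At the same time, $y_7$ is close to $y_6,y_5$ and $y_9$ is close to $y_{10},y_1$. I expect this to force a conflict in every case.

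The main obstacle is organising this finite distance check for $\mathcal{G}_6$ cleanly. I would first tabulate the distance-$\ge 3$ pairs in the $10$-vertex graph, and then argue that no perfect matching between the triangles as in (c) extends to $y_7,y_8,y_9$. If the matching step already fails because fewer than three cross pairs are at distance at least $3$, the proof ends there. Otherwise, I finish with the short case analysis on the colors of $y_7,y_8,y_9$ described above.
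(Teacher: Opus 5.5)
\textbf{Verdict: the $\mathcal{G}_2$ half is complete; the $\mathcal{G}_6$ half has a genuine gap.}

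Your argument for $\mathcal{G}_2$ is correct. The only pair at distance $3$ is $\{x_3,x_6\}$, so the classes $2_a,2_b$ cover at most three vertices. Hence the $1$-class would need three independent vertices inside the path $x_1x_2x_4x_5$, which is impossible. The paper simply calls this case obvious, so your count is a valid justification of it.

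For $\mathcal{G}_6$ you never carry the argument out, and the step you plan to rely on first does not work. Your guess about the cross pairs is wrong. Of the nine pairs with one vertex in $\{y_1,y_2,y_{10}\}$ and one in $\{y_4,y_5,y_6\}$, only $(y_2,y_4)$ is at distance $2$. All the others are at distance at least $3$: $d(y_2,y_5)=d(y_2,y_6)=d(y_1,y_4)=d(y_{10},y_4)=3$, and the remaining four pairs are at distance $4$. So four perfect matchings survive your step (c), and no contradiction comes from the matching. Everything then rests on the case analysis on $y_7,y_8,y_9$ that you only say you ``expect'' to work, and its decisive observation is missing.

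That observation is as follows. $y_9$ is adjacent to $y_{10}$, which is adjacent to both $y_1$ and $y_2$. So $y_9$ is within distance $2$ of \emph{every} vertex of the triangle $y_1y_2y_{10}$. Likewise $y_7$ is within distance $2$ of all of $y_4,y_5,y_6$, via $y_6$. By exactly the reasoning of your (b) and (c), neither $y_7$ nor $y_9$ can receive $2_a$, $2_b$ or $2_c$, since each of these classes already contains a vertex of the relevant triangle. So both receive $2_d$. But $d(y_7,y_9)=2$ through $y_8$, a contradiction.

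This finishes the proof at once, with no matching analysis needed. It is precisely the paper's argument, with $u=y_3$, $u_1=y_9$, $u_2=y_7$: the colour missing from each triangle is forced both on $y_3$ and on the outer vertex attached to that triangle, so $y_7$ and $y_9$ end up with the same colour at distance $2$.
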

\begin{proof}
The graph $\mathcal{G}_2$ is obviously not $(1,2,2)$-packing colorable. Suppose to the contrary that the graph $\mathcal{G}_6$ has a $(2,2,2,2)$-packing coloring. Let $x,y,z$ (resp., $x',y',z'$) be the vertices of the first (resp., second) triangle. Let $u$ be the vertex having a common neighbor on the two triangles and $u_1$ (resp., $u_2$) be the other vertex having a neighbor in the first (resp., second) triangle. As each triangle contains three colors, $u$ and $u_1$ (resp., $u$ and $u_2$) have the same color. Thus, $u_1$ and $u_2$ have the same color, a contradiction.
\end{proof}\\

We proved that a $1$-saturated subcubic graph $G$ such that $g_3(G)=3$ is $(1,2,2,3)$-packing colorable. The result is sharp in several senses: there are such graphs that are not $(1,2,2)$-packing colorable (Proposition \ref{prop 3}), there are such graphs that are not $(2,2,2,2)$-packing colorable (Proposition \ref{prop 3}), and there is a $2$-saturated subcubic graph $G$ such that $g_3(G)=3$ that is not $(1,2,2,3)$-packing colorable (Proposition \ref{prop 4}).

\begin{proposition}\label{prop 4}
    The graph $\mathcal{G}_{10}$ is not $(1,2,2,3)$-packing colorable.
\end{proposition}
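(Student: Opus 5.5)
We do not know the exact graph $\mathcal{G}_{10}$, since its figure appears later in the paper. The plan assumes it is a small $2$-saturated subcubic graph with $g_3=3$, built from triangles that share structure with short odd cycles, in the spirit of $\mathcal{G}_5$ and $\mathcal{G}_6$. The argument is by contradiction. Suppose $\mathcal{G}_{10}$ has a $(1,2,2,3)$-packing coloring with colors $1,2_a,2_b,3$.

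The first step is a counting argument, as in Proposition~\ref{prop 1}.
\begin{itemize}
\item The vertices colored by $1$ form an independent set, so each triangle contains at most one vertex of color $1$.
\item Each color $2_a$, $2_b$ appears at most once in the closed neighborhood of any vertex, and at most once among the vertices of each triangle together with their neighbors.
\item Color $3$ can appear only on vertices pairwise at distance at least $4$. In a graph of small diameter, this means at most one or two vertices get color $3$.
\end{itemize}
We then bound $|V(\mathcal{G}_{10})|$ by $\alpha(\mathcal{G}_{10})$, plus the maximum sizes of the two $2$-packings, plus the maximum size of a $3$-packing. If this bound falls below the order, we are done immediately.

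If counting alone does not suffice, we do a local case analysis on triangles.
\begin{itemize}
\item In each triangle, at least two vertices receive colors from $\{2_a,2_b,3\}$.
\item Since $\mathcal{G}_{10}$ is $2$-saturated, each triangle has a $3$-vertex whose third neighbor is close to other triangles. Two triangles at distance at most $2$ therefore cannot both use both $2_a$ and $2_b$.
\item Consequently, color $3$ must be used in many triangles that are pairwise at distance at most $3$.
\end{itemize}
We fix, without loss of generality, the color of a central vertex, for instance the common $3$-vertex adjacent to two triangles. We then propagate forced colors along the triangles, as in the proof of Proposition~\ref{prop 3} for $\mathcal{G}_6$. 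The propagation should end with two vertices at distance at most $2$ forced to share a color $2$, or with two vertices at distance at most $3$ forced to share color $3$.

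We expect the main obstacle to be organizing this case analysis so that it stays short. Our first attempt would be to branch only on which vertex receives color $3$; by the diameter bound there are only a few choices, up to the symmetry of $\mathcal{G}_{10}$. In each branch, the rest of the graph has to be $(1,2,2)$-packing colored. For that we apply the triangle argument above, together with the observation from Lemma~\ref{lemma 1} that odd cycles such as $C_5$ resist $(1,2,2)$-colorings. This should give a contradiction in every branch.
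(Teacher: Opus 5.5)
\textbf{There is a genuine gap.} What you have written is a plan rather than a proof, and on the actual graph its decisive steps fail.

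From the figure, $\mathcal{G}_{10}$ is the $9$-cycle $v_1v_2\cdots v_9v_1$ with chords $v_2v_9$, $v_3v_5$, $v_6v_8$. So it consists of three triangles $\{v_9,v_1,v_2\}$, $\{v_3,v_4,v_5\}$, $\{v_6,v_7,v_8\}$, and their six $3$-vertices form the $6$-cycle $v_2v_3v_5v_6v_8v_9$.

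Your counting step uses $\alpha(\mathcal{G}_{10})=3$, at most one vertex of color $3$ (the diameter is $3$), and $2$-packings of size at most $3$. That gives $3+3+3+1=10$. Even after noting that there are no two disjoint $2$-packings of size $3$, it only gives $3+5+1=9=|V(\mathcal{G}_{10})|$. So the bound never falls below the order. The same tightness ($3+5=8$ for the $8$ remaining vertices) persists in every branch of your ``branch on the vertex of color $3$'' plan.

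Your fallback claims are false or vacuous for this graph:
\begin{itemize}
\item The triangle $\{v_9,v_1,v_2\}$ has outside neighbours $v_3$ and $v_8$ at distance $3$. Both can receive $2_a$, contradicting your second counting bullet.
\item The adjacent triangles $\{v_9,v_1,v_2\}$ and $\{v_3,v_4,v_5\}$ can both use $2_a$ and $2_b$: put $2_a$ on $v_1,v_4$ and $2_b$ on $v_9,v_5$.
\item For the sequence $(1,2,2,3)$ nothing forces color $3$ into a triangle. That mechanism belongs to $(1,1,3,\dots)$-type sequences, as for $\mathcal{G}_5$ and $\mathcal{G}_{11}$.
\item Every vertex lies in one of the triangles, so there is no central vertex joining two triangles.
\item $\mathcal{G}_{10}$ has no $5$-cycle (its cycles have lengths $3,6,7,8,9$), so the $C_5$ exception of Lemma~\ref{lemma 1} plays no role.
\end{itemize}

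The missing idea is the rigidity behind the tight count. A $3$-vertex is at distance $3$ only from its antipode on the $6$-cycle and from one $2$-vertex, and these two are adjacent. Hence the only $2$-packing of size $3$ is $\{v_1,v_4,v_7\}$, and $|V_{2_a}|+|V_{2_b}|\le 5$. Equality holds only if one class is $\{v_1,v_4,v_7\}$ and the other is an antipodal pair of the $6$-cycle, say $\{v_2,v_6\}$.

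In the equality case, color $1$ must lie on the remaining four vertices of the $6$-cycle. These induce the two disjoint edges $v_3v_5$ and $v_8v_9$, so at most $2+5+1=8$ vertices are colored. Otherwise at most $3+4+1=8$ vertices are colored. Either way one of the nine vertices is left uncolored.

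The paper reaches the same count by splitting on how many vertices of the $6$-cycle receive color $1$. You need some such structural refinement of the counting, and your proposal does not supply it.
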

\begin{proof}
On the contrary, suppose that $\mathcal{G}_{10}$ has $(1,2,2,3)$-packing coloring. Note that the diameter of $\mathcal{G}_{10}$ is $3$, then at most one vertex can be colored by $3$. If three of the cycle of order $6$ are colored by $1$, then no more vertices are colored by $1$. In this case, at most four of the remaining vertices can be colored by the two colors $2$, a contradiction. If at most two vertices of the cycle of order $6$ are colored by $1$, then at most five of the remaining vertices can be colored by the two colors $2$ and the color $1$, a contradiction.
\end{proof}\\

We proved that a $1$-saturated subcubic graph $G$ such that $g_3(G)=3$ is $(2,2,2,2,3)$-packing colorable. The result is sharp in the sense that there are such graphs that are not $(2,2,2,2)$-packing colorable (Proposition \ref{prop 3}).

In addition, we pose the following problems.

\begin{problem}
    Is every $1$-saturated subcubic graph such that $g_3(G)=3$ $(1,2,3,3)$-packing colorable? $(1,2,2,4)$-packing colorable? $(2,2,2,2,4)$-packing colorable?
\end{problem}

We proved that a $1$-saturated subcubic graph $G$ such that $g_3(G)=3$ has $\chi_{\rho}(G)\leq 6$. We did not find such a graph $G$ such that $\chi_{\rho}(G)\geq 6$. Thus, we conjecture the following.

\begin{conjecture}
        If $G$ is a $1$-saturated subcubic graph such that $g_3(G)=3$, then $\chi_{\rho} (G)\leq 5$.
\end{conjecture}

We proved that every $1$-saturated subcubic graph $G$ such that $g_3(G)\leq 4$ is $(1,2,2,2)$-packing colorable. The result is sharp in several senses: there are $1$-saturated subcubic graphs such that $g_3(G)\leq 4$ that are not $(1,2,2)$-packing colorable (Proposition \ref{prop 5}), there are $1$-saturated subcubic graphs such that $g_3(G)\geq 5$ that are not $(1,2,2,2)$-packing colorable (Proposition \ref{prop 6}), and there is a $2$-saturated subcubic graph $G$ such that $g_3(G)\leq 4$ that is not $(1,2,2,2)$-packing colorable (Proposition \ref{prop 7}).

\begin{proposition}\label{prop 5}
    The graph $\mathcal{G}_7$ is not $(1,2,2)$-packing colorable.
\end{proposition}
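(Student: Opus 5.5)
We have not seen $\mathcal{G}_7$ explicitly, but from context it should be a small $1$-saturated subcubic graph with $g_3(\mathcal{G}_7)\le 4$ in which some $3$-vertex has local girth exactly $4$. The graph $\mathcal{G}_1$ already covers local girth $3$ in the $0$-saturated setting, so $\mathcal{G}_7$ is presumably a small graph built from $4$-cycles sharing edges or joined at $3$-vertices. The proof will follow the same counting template as Proposition \ref{prop 1} and Proposition \ref{prop 4}.

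The argument uses the two quantities that bound any $(1,2,2)$-packing coloring with colors $1,2_a,2_b$. First, the color classes $2_a$ and $2_b$ are $2$-packings. Second, the color class $1$ is an independent set. So if $n=|V(\mathcal{G}_7)|$, we need $\alpha(\mathcal{G}_7)+2\rho_2(\mathcal{G}_7)\ge n$, where $\rho_2$ denotes the maximum size of a $2$-packing. I would first determine the diameter. If the diameter is $2$, then each color $2$ is used at most once, and it suffices to check that $\alpha(\mathcal{G}_7)\le n-3$. If the diameter is $3$, I would show directly that $\rho_2(\mathcal{G}_7)\le 2$, for instance by exhibiting a vertex whose closed second neighbourhood covers almost everything, and then bound $\alpha$.

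When the crude count $\alpha+2\rho_2<n$ fails, I would refine it as in Proposition \ref{prop 4}. Split into cases according to how many vertices of a distinguished cycle (the $4$-cycle through a $3$-vertex) receive color $1$: either two antipodal vertices, or at most one. In each case, the color-$1$ vertices on the cycle forbid color $1$ at their neighbours. The vertices left to cover must then be handled by two $2$-packings, and inside the small remaining subgraph these can cover at most two vertices each. That gives at most $n-1$ colored vertices, which is the desired contradiction.

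The main obstacle is the case where the naive bound $\alpha+2\rho_2$ equals $n$. There one needs the interaction argument: a maximum independent set and two disjoint $2$-packings cannot simultaneously attain their maxima. I would settle this by fixing the color-$1$ set among the few maximum independent sets, which are easy to enumerate by symmetry. For each choice, I would show that the complement contains two vertices at distance at most $2$ that no $2$-packing pair can separate, typically the two $2$-vertices on opposite sides of the $4$-cycle.
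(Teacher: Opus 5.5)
You have written a template rather than a proof. You never pin down $\mathcal{G}_7$, and every step is conditional (``I would determine the diameter'', ``I would show $\rho_2\le 2$''), so nothing about the actual graph is verified. From the figure, $\mathcal{G}_7$ is the domino (your guess of two $4$-cycles sharing an edge is right): the $6$-cycle $v_1v_2\cdots v_6v_1$ plus the chord $v_2v_5$. Here $n=6$ and $\alpha=3$, since the edges $v_1v_6$, $v_2v_5$, $v_3v_4$ partition the vertex set. The diameter is $3$, and $\{v_1,v_4\}$, $\{v_3,v_6\}$ are the only pairs at distance $3$, so $\rho_2=2$.

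Hence $\alpha+2\rho_2=7=n+1$. The crude count fails, and this is not the equality case you single out as the main obstacle. Since the deficit is one, one class may fall one short of its maximum, so fixing the color-$1$ class among the maximum independent sets does not exhaust the cases. The obstruction you describe is also the wrong one: two vertices at distance at most $2$ can always be separated by giving them $2_a$ and $2_b$. What actually blocks a coloring is three pairwise-close vertices that remain uncovered. Likewise, in your $4$-cycle split, ``each $2$-class covers at most two vertices'' does not close the subcase where the color-$1$ class is exactly $\{v_1,v_5\}$. There four vertices $v_2,v_3,v_4,v_6$ remain, and you need the fact that only $\{v_3,v_6\}$ among them is at distance $3$.

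The missing argument is short; it is what the paper does, splitting on the size of the color-$1$ class. The only independent sets of size $3$ are $\{v_1,v_3,v_5\}$ and $\{v_2,v_4,v_6\}$. The complement of each consists of three vertices pairwise at distance $2$, which would need three distinct colors $2$. If the color-$1$ class has at most two vertices, then both $2$-classes must have size $2$. They are therefore $\{v_1,v_4\}$ and $\{v_3,v_6\}$, which forces color $1$ on the adjacent pair $v_2,v_5$, a contradiction.
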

\begin{proof}
On the contrary, suppose that $\mathcal{G}_7$ has a $(1,2,2)$-packing coloring. At most three vertices ar colored by $1$. If three vertices ar colored by $1$, then the remaining vertices are at a distance $2$ from each other. Thus, they cannot be colored using only two colors $2$, a contradiction. Otherwise, at most two vertices are colored by $1$, then at most two of the remaining vertices can be colored by the same color $2$, say $2_a$. The remaining vertices are at a distance $2$ from each other, then they cannot be colored by $2_b$, a contradiction.
\end{proof}

\begin{figure}[h!]
\centering
\begin{tikzpicture}[
    node/.style={circle, draw, fill=gray!10, minimum size=1mm},
    edge/.style={thick},
    green edge/.style={thick, draw=green},
    red edge/.style={thick, draw=red},
    black edge/.style={thick, draw=black},
    dotted edge/.style={thick, dotted},
    weight/.style={font=\bfseries\small}
]

\node[node] (1) at (0,0) {};
\node[node] (2) at (1,0) {};
\node[node] (3) at (2,0) {};
\node[node] (4) at (2,1) {};
\node[node] (5) at (1,1) {};
\node[node] (6) at (0,1) {};

\draw (1) -- (2);
\draw (2) -- (3);
\draw (3) -- (4);
\draw (4) -- (5);
\draw (5) -- (6);
\draw (6) -- (1);
\draw (2) -- (5);

\node at (1, -1) {$\mathcal{G}_7$};


\node[node] (x1) at (4,0) {};
\node[node] (x2) at (5,0) {};
\node[node] (x3) at (6,0) {};
\node[node] (x4) at (6,1) {};
\node[node] (x5) at (6,2) {};
\node[node] (x6) at (5,2) {};
\node[node] (x7) at (4,2) {};
\node[node] (x8) at (4,1) {};

\draw (x1) -- (x2);
\draw (x2) -- (x3);
\draw (x3) -- (x4);
\draw (x4) -- (x5);
\draw (x5) -- (x6);
\draw (x6) -- (x7);
\draw (x7) -- (x8);
\draw (x8) -- (x1);
\draw (x1) -- (x5);
\draw (x3) -- (x7);

\node at (5, -1) {$\mathcal{G}_8$};


\node[node] (y1) at (8.5,0) {};
\node[node] (y2) at (9.5,0) {};
\node[node] (y3) at (10,1) {};
\node[node] (y4) at (9,1) {};
\node[node] (y5) at (8,1) {};
\node[node] (y6) at (9,2) {};
\node[node] (y7) at (11,1) {};

\draw (y1) -- (y2);
\draw (y2) -- (y3);
\draw (y3) -- (y4);
\draw (y4) -- (y5);
\draw (y5) -- (y1);
\draw (y5) -- (y6);
\draw (y3) -- (y6);
\draw (y6) -- (y7);
\draw (y2) -- (y7);

\node at (9, -1) {$\mathcal{G}_9$};

\end{tikzpicture}
\caption{The graph $\mathcal{G}_7$: a $1$-saturated subcubic graph such that $g_3(\mathcal{G}_7)=4$ that is not $(1,2,2)$-packing colorable.\\
The graph $\mathcal{G}_8$: a $1$-saturated subcubic graph such that $g_3(\mathcal{G}_8)=5$ that is not $(1,2,2,2)$-packing colorable.\\
The graph $\mathcal{G}_9$: a $2$-saturated subcubic graph such that $g_3(\mathcal{G}_9)=4$ that is neither $(2,2,2,2,2)$-packing colorable nor $(1,2,2,2)$-packing colorable.}\label{figure 4}
\end{figure}
\begin{figure}[h!]
\centering
\begin{tikzpicture}[
    node/.style={circle, draw, fill=gray!10, minimum size=1mm},
    edge/.style={thick},
    green edge/.style={thick, draw=green},
    red edge/.style={thick, draw=red},
    black edge/.style={thick, draw=black},
    dotted edge/.style={thick, dotted},
    weight/.style={font=\bfseries\small}
]

\node[node] (1) at (0,0) {};
\node[node] (2) at (1,0) {};
\node[node] (3) at (2,0) {};
\node[node] (4) at (3,0) {};
\node[node] (5) at (2.5,1) {};
\node[node] (6) at (2,2) {};
\node[node] (7) at (1.5,3) {};
\node[node] (8) at (1,2) {};
\node[node] (9) at (0.5,1) {};

\draw (1) -- (2);
\draw (2) -- (3);
\draw (3) -- (4);
\draw (4) -- (5);
\draw (5) -- (6);
\draw (6) -- (7);
\draw (7) -- (8);
\draw (8) -- (9);
\draw (9) -- (1);
\draw (9) -- (2);
\draw (3) -- (5);
\draw (6) -- (8);

\node at (1.5, -1) {$\mathcal{G}_{10}$};


\node[node] (1) at (6,0) {};
\node[node] (2) at (8,0) {};
\node[node] (3) at (9,1) {};
\node[node] (4) at (9,3) {};
\node[node] (5) at (8,4) {};
\node[node] (6) at (6,4) {};
\node[node] (7) at (5,3) {};
\node[node] (8) at (5,1) {};

\node[node] (9) at (7,1) {};
\node[node] (10) at (8,2) {};
\node[node] (11) at (7,3) {};
\node[node] (12) at (6,2) {};

\draw (1) -- (2);
\draw (2) -- (3);
\draw (3) -- (4);
\draw (4) -- (5);
\draw (5) -- (6);
\draw (6) -- (7);
\draw (7) -- (8);
\draw (8) -- (1);
\draw (1) -- (9);
\draw (2) -- (9);
\draw (3) -- (10);
\draw (4) -- (10);
\draw (5) -- (11);
\draw (6) -- (11);
\draw (7) -- (12);
\draw (8) -- (12);
\draw (9) -- (11);
\draw (10) -- (12);

\node at (7, -1) {$\mathcal{G}_{11}$};

\end{tikzpicture}
\caption{The graph $\mathcal{G}_{10}$: a $2$-saturated subcubic graph such that $g_3(\mathcal{G}_{10})=3$ that is neither $(1,1,3,3)$-packing colorable nor $(1,2,2,3)$-packing colorable.\\
The graph $\mathcal{G}_{11}$: a $(3,3)$-saturated subcubic graph such that $g_3(\mathcal{G}_{11})=3$ that is not $(1,1,3,3,3)$-packing colorable.}\label{figure 5}
\end{figure}

\begin{proposition}\label{prop 6}
    The graph $\mathcal{G}_8$ is not $(1,2,2,2)$-packing colorable.
\end{proposition}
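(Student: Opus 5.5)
The plan is a counting argument on the distances in $\mathcal{G}_8$; no earlier result is needed. Label $\mathcal{G}_8$ as in Figure~\ref{figure 4}: it is the cycle $x_1x_2\dots x_8x_1$ together with the chords $x_1x_5$ and $x_3x_7$. Thus the $3$-vertices are $x_1,x_3,x_5,x_7$ and the $2$-vertices are $x_2,x_4,x_6,x_8$. Suppose, to the contrary, that $\mathcal{G}_8$ has a $(1,2,2,2)$-packing coloring with classes $V_1,V_{2_a},V_{2_b},V_{2_c}$.

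\textbf{Step 1: the large colour-$2$ classes.} I will determine all pairs of vertices at distance at least $3$. Each $3$-vertex, for instance $x_1$, sees every other vertex within distance $2$: its neighbours are $x_2,x_8,x_5$, and these reach $x_3,x_7,x_4,x_6$. For a $2$-vertex, for instance $x_2$, the only vertex at distance $3$ is $x_6$, since both $x_2x_1x_5x_6$ and $x_2x_3x_7x_6$ have length $3$. By symmetry, the only pairs at distance at least $3$ are $\{x_2,x_6\}$ and $\{x_4,x_8\}$. Hence every colour-$2$ class has at most two vertices, and a class of size two is one of these two pairs.

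\textbf{Step 2: the independent sets.} Next I will list the independent sets of $\mathcal{G}_8$. Any independent set containing a $3$-vertex, say $x_1$, must avoid $x_2,x_8,x_5$. Among the remaining vertices $x_3,x_4,x_6,x_7$, the edges $x_3x_4$, $x_3x_7$ and $x_6x_7$ allow at most two more vertices. So such a set has size at most $3$, and the unique independent set of size $4$ is $\{x_2,x_4,x_6,x_8\}$.

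\textbf{Step 3: the two cases.} If $V_1=\{x_2,x_4,x_6,x_8\}$, then $x_1,x_3,x_5,x_7$ must receive colours $2$. They are pairwise within distance $2$, so they need four distinct $2$-colours, but only three are available, a contradiction. Otherwise $|V_1|\le 3$, so the three colour-$2$ classes must cover at least $5$ vertices. Since the two possible pairs are disjoint, their total size is at most $2+2+1=5$. Equality forces $|V_1|=3$ and forces both pairs to be used, so all four $2$-vertices are coloured by colours $2$. Then $V_1$ consists of three $3$-vertices. This is impossible, because $x_1x_5$ and $x_3x_7$ are edges, so at most two $3$-vertices are independent. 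This contradiction completes the argument.

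The only delicate step is the exhaustive distance check in Step 1; everything afterwards is elementary counting.
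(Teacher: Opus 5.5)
Your proof is correct, and the facts it rests on check out: $x_i\mapsto x_{i+2}$ is an automorphism, $\alpha(\mathcal{G}_8)=4$ is attained only by $\{x_2,x_4,x_6,x_8\}$, and the only pairs at distance at least $3$ are $\{x_2,x_6\}$ and $\{x_4,x_8\}$.

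The paper orders the argument differently and starts from the $3$-vertices. These induce the matching $x_1x_5,x_3x_7$, so at least one endpoint of each chord must receive a colour $2$. This gives two non-adjacent $3$-vertices coloured $2$. Their colours $2_a,2_b$ must be distinct, because each $3$-vertex is within distance $2$ of every vertex. That rules out $2_a$ and $2_b$ on the remaining six vertices. The paper then simply asserts it is ``clear'' that these six cannot be coloured with $1$ and $2_c$ alone.

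Your argument is instead a global capacity count. You show $|V_1|\le 4$ with a unique extremal set, and that the three $2$-classes hold at most $2+2+1=5$ vertices. That leaves only the two cases you close. Both proofs use the same two structural facts: every $3$-vertex sees the whole graph within distance $2$, and the $3$-vertices induce a perfect matching. The paper's version is shorter. Yours verifies every step explicitly, including the one the paper leaves unchecked, which on inspection needs the same independence and distance-$3$ bookkeeping you do.
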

\begin{proof}
On the contrary, suppose that $\mathcal{G}_8$ has a $(1,2,2,2)$-packing coloring. Then, at least two $3$-vertices are colored by two different colors $2$. Suppose that $x$ and $y$ are two non-adjacent $3$-vertices that are colored by $2_a$ and $2_b$, respectively. The remaining six vertices cannot be colored by $2_a$ or $2_b$. It is clear that they cannot be colored by $1$ and $2_c$ only, a contradiction.
\end{proof}

\begin{proposition}\label{prop 7}
    The graph $\mathcal{G}_9$ is not $(1,2,2,2)$-packing colorable.
\end{proposition}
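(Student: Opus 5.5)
The plan is a counting argument on the seven vertices of $\mathcal{G}_9$. Label them $y_1,\dots,y_7$ as in Figure~\ref{figure 4}, so the edges are $y_1y_2,y_2y_3,y_3y_4,y_4y_5,y_5y_1,y_5y_6,y_3y_6,y_6y_7,y_2y_7$. Assume for contradiction that $\mathcal{G}_9$ has a $(1,2,2,2)$-packing coloring with colors $1,2_a,2_b,2_c$. I will bound the size of each color class and show the bounds cannot add up to $7$.

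\textbf{Step 1 (distances).} Compute all pairwise distances directly.
\begin{itemize}
\item $y_1$ is within distance $2$ of every other vertex.
\item Every pair among $y_2,y_3,y_5,y_6$ is within distance $2$, and each of these four vertices is within distance $2$ of $y_4$ and of $y_7$.
\item The only pair at distance $3$ is $\{y_4,y_7\}$: both $y_4y_3y_2y_7$ and $y_4y_5y_6y_7$ are shortest paths, and $y_4,y_7$ have no common neighbor.
\end{itemize}
Hence the diameter is $3$, and the only pair at distance greater than $2$ is $\{y_4,y_7\}$. Consequently every class of a color $2$ is either a single vertex or exactly $\{y_4,y_7\}$. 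Since the classes are disjoint, at most one of them can be $\{y_4,y_7\}$.

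\textbf{Step 2 (independence).} The class of color $1$ is an independent set. I claim $\alpha(\mathcal{G}_9)\le 3$. The cycle $y_1y_2y_3y_4y_5$ has order $5$, so it contains at most $2$ vertices of any independent set. The edge $y_6y_7$ means at most one of $y_6,y_7$ can be added. So an independent set has at most $3$ vertices.

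\textbf{Step 3 (case split).}
\begin{itemize}
\item If no color $2$ class equals $\{y_4,y_7\}$, the three colors $2$ cover at most $3$ vertices. Then color $1$ must cover at least $4$ vertices, contradicting Step~2.
\item If some color $2$ class equals $\{y_4,y_7\}$, the remaining two colors $2$ cover at most $2$ of the vertices $y_1,y_2,y_3,y_5,y_6$. So color $1$ must cover at least $3$ of these five vertices. But they induce the $5$-cycle $y_1y_2y_3y_6y_5y_1$, whose independence number is $2$, a contradiction.
\end{itemize}

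The only real work is the distance verification in Step~1, in particular confirming that $\{y_4,y_7\}$ is the unique pair at distance at least $3$. This is a finite check over all $21$ pairs, done via the neighborhoods listed above.
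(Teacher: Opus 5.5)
Your proof is correct. The distance check holds: $\{y_4,y_7\}$ is the only pair at distance $3$, and $y_1y_2y_3y_6y_5$ is an induced $5$-cycle. So every color-$2$ class is a singleton or $\{y_4,y_7\}$, and both cases of your count give a contradiction.

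The paper argues differently. It uses the fact that $C_5$ is not $(1,2,2)$-packing colorable (the exception in Lemma~\ref{lemma 1}(ii)) on the $5$-cycle $y_1y_2y_3y_6y_5$, so all four colors, in particular $2_a,2_b,2_c$, appear on that cycle. Since $y_4$ and $y_7$ are within distance $2$ of every vertex of the cycle, both are forced to color $1$. The paper then asserts that some neighbor of $y_4$ or $y_7$ is also colored $1$.

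In effect, the paper rules out your Case~B up front via the lemma. It replaces your bound $\alpha(\mathcal{G}_9)\le 3$ in Case~A by a local neighbor conflict. Its route is shorter and reuses an earlier result, but its last step is left implicit. That step needs exactly your observation that $y_2,y_3,y_5,y_6$ are pairwise within distance $2$. Then the cycle carries exactly two non-adjacent vertices of color $1$, at least one of which lies in $N(y_4)\cup N(y_7)=\{y_2,y_3,y_5,y_6\}$.

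Your version first determines all admissible color-$2$ classes from the full distance structure and then counts. This makes every step checkable, and the same approach carries over to the paper's other small non-colorability checks, such as Proposition~\ref{prop 9}.
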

\begin{proof}
On the contrary, suppose that $\mathcal{G}_9$ has a $(1,2,2,2)$-packing coloring. As a cycle of order $5$ is not $(1,2,2)$-packing colorable, the vertices of the cycle of order $5$ that contains four $3$-vertices must uses the four colors $1,2_a,2_b,2_c$. The two remaining vertices are at a distance of at most $2$ from the vertices of the mentioned cycle. Then, they are colored by the color $1$. But at least one of their neighbors is colored by $1$, a contradiction.
\end{proof}\\

We proved that every $2$-saturated subcubic graph $G$ such that $g_3(G)=3$ is $(1,1,2)$-packing colorable. The result is sharp in the sense that there are such graphs that are not $(1,1,3,3)$-packing colorable (Proposition \ref{prop 8}) and there are such graphs that are not $(1,2,2,3)$-packing colorable (Proposition \ref{prop 4}). Moreover, as we did not find a $2$-saturated subcubic graph $G$ such that $g_3(G)\geq 4$ that is not $(1,1,2)$-packing colorable, we conjecture the following.

\begin{conjecture}
    Every $2$-saturated subcubic graph is $(1,1,2)$-packing colorable.
\end{conjecture}

\begin{proposition}\label{prop 8}
    The graph $\mathcal{G}_{10}$ is not $(1,1,3,3)$-packing colorable.
\end{proposition}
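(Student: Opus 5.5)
The plan is a pigeonhole argument on the three triangles of $\mathcal{G}_{10}$ combined with a diameter bound. Reading off Figure~\ref{figure 5}, $\mathcal{G}_{10}$ is the cycle $v_1v_2\dots v_9v_1$ with the chords $v_9v_2$, $v_3v_5$ and $v_6v_8$. It therefore contains three vertex-disjoint triangles:
\[
A=v_9v_1v_2,\qquad B=v_3v_4v_5,\qquad C=v_6v_7v_8 .
\]
Here $v_1,v_4,v_7$ are the $2$-vertices, and the remaining six vertices form the $6$-cycle $v_2v_3v_5v_6v_8v_9$. Suppose, to the contrary, that $\mathcal{G}_{10}$ has a $(1,1,3,3)$-packing coloring with colors $1_a,1_b,3_a,3_b$.

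\textbf{First step.} Every triangle contains a vertex colored $3_a$ or $3_b$. Each color $1$ class is an independent set, so a triangle can contain at most one vertex colored $1_a$ and at most one colored $1_b$. Hence its third vertex must receive a color $3$.

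\textbf{Second step.} Since there are three vertex-disjoint triangles and only two colors $3$, the pigeonhole principle gives two vertices in different triangles with the same color $3_k$. These two vertices would have to be at distance at least $4$.

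\textbf{Third step.} I will check that $\mathcal{G}_{10}$ has diameter $3$, which is the only part requiring care. Since $V(\mathcal{G}_{10})=V(A)\cup V(B)\cup V(C)$, it suffices to bound distances between vertices of different triangles. The farthest candidates are the $2$-vertices, and these are joined by explicit paths of length $3$:
\[
v_1v_2v_3v_4,\qquad v_1v_9v_8v_7,\qquad v_4v_5v_6v_7 .
\]
Every other pair of vertices from distinct triangles is either adjacent through a cycle edge or a chord, or joined by a path of length at most $3$ through the $6$-cycle. So any two vertices lie within distance $3$, which contradicts the second step and proves the proposition.
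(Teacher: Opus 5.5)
Your proof is correct and is the paper's argument: the three vertex-disjoint triangles each force a vertex colored $3_a$ or $3_b$, while diameter $3$ allows each color $3$ at most once. The paper simply asserts the diameter, whereas you read the graph off correctly as $C_9$ with chords $v_9v_2,v_3v_5,v_6v_8$ and check it explicitly; the general step ``every other pair \dots'' is loose, but the bound of $3$ is true.
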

\begin{proof}
The diameter of $\mathcal{G}_{10}$ is $3$, then at most one vertex can be colored by $3_a$ (resp., $3_b$). But there are three disjoint triangles that must contain a vertex of color $3$. Thus, $\mathcal{G}_{10}$ does not have a $(1,1,3,3)$-packing coloring.
\end{proof}\\

We proved that every $2$-saturated subcubic graph $G$ such that $g_3(G)=3$ is $(1,2,2,2)$-packing colorable. The result is sharp in several senses: there are such graphs that are not $(1,2,2,3)$-packing colorable (Proposition \ref{prop 4}), there are such graphs that are not $(2,2,2,2)$-packing colorable (Proposition \ref{prop 1}), and there is a $2$-saturated subcubic graph $G$ such that $g_3(G)=4$ that is not $(1,2,2,2)$-packing colorable (Proposition \ref{prop 7}). In addition, we did not find a $2$-saturated subcubic graph $G$ such that $g_3(G)=4$ that is not $(1,2,2,2,2)$-packing colorable. Thus, we conjecture the following.

\begin{conjecture}
    Every $2$-saturated subcubic graph is $(1,2,2,2,2)$-packing colorable.
\end{conjecture}

We proved that every $2$-saturated subcubic graph $G$ such that $g_3(G)=3$ is $(2,2,2,2,2)$-packing colorable. The result is sharp in several senses: there are such graphs that are not $(2,2,2,2)$-packing colorable (Proposition \ref{prop 1}) and there is a $2$-saturated subcubic graph $G$ such that $g_4(G)=4$ that is not $(2,2,2,2,2)$-packing colorable (Proposition \ref{prop 9}). Moreover, we did not find a $2$-saturated subcubic graph $G$ such that $g_3(G)=3$ that is not $(2,2,2,2,3)$-packing colorable. Thus, we conjecture the following.

\begin{conjecture}
    Every $2$-saturated subcubic graph $G$ such that $g_3(G)=3$ is $(2,2,2,2,3)$-packing colorable.
\end{conjecture}

\begin{proposition}\label{prop 9}
    The graph $\mathcal{G}_9$ is not $(2,2,2,2,2)$-packing colorable.
\end{proposition}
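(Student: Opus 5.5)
The plan is a counting argument on the distance structure of $\mathcal{G}_9$, in the same spirit as the proofs of Propositions~\ref{prop 1} and~\ref{prop 8}. In a $(2,2,2,2,2)$-packing coloring every color class is a $2$-packing, i.e.\ a set of vertices pairwise at distance at least $3$. Since $\mathcal{G}_9$ has $7$ vertices and only $5$ classes are available, it is enough to show that $\mathcal{G}_9$ has at most one pair of vertices at distance $\geq 3$ and no $2$-packing of size $3$.

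First I fix notation following Figure~\ref{figure 4}, writing $y_1,\dots,y_7$ for the vertices. The edges are $y_1y_2, y_2y_3, y_3y_4, y_4y_5, y_5y_1$ (the $5$-cycle), together with $y_5y_6, y_3y_6, y_6y_7, y_2y_7$. So $y_2,y_3,y_5,y_6$ are $3$-vertices and $y_1,y_4,y_7$ are $2$-vertices.

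Next I compute, for each vertex, the set of vertices within distance $2$.
\begin{itemize}
\item $y_1$: neighbors $y_2,y_5$; at distance $2$: $y_3,y_7,y_4,y_6$. So $y_1$ is within distance $2$ of every vertex.
\item $y_2$: neighbors $y_1,y_3,y_7$; these reach $y_5,y_4,y_6$.
\item $y_3$: neighbors $y_2,y_4,y_6$; these reach $y_1,y_7,y_5$.
\item $y_5$: neighbors $y_1,y_4,y_6$; these reach $y_2,y_3,y_7$.
\item $y_6$: neighbors $y_5,y_3,y_7$; these reach $y_1,y_4,y_2$.
\end{itemize}
Hence each of $y_1,y_2,y_3,y_5,y_6$ is within distance $2$ of all other vertices. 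For the remaining two vertices, $y_4$ has neighbors $y_3,y_5$ and second neighbors $y_2,y_6,y_1$, so it misses only $y_7$. Every path from $y_4$ to $y_7$ passes through $y_3$ or $y_5$ and then through $y_2$ or $y_6$, so $dist(y_4,y_7)=3$.

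The conclusion follows. The only pair of vertices at distance at least $3$ is $\{y_4,y_7\}$. Therefore in any $(2,2,2,2,2)$-packing coloring at most one color class has two vertices and every other class is a singleton. The five classes then cover at most $2+4=6<7$ vertices, a contradiction.

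There is no real obstacle here. The only care needed is reading the edge set correctly from the figure and verifying the distance computation, especially that no shortcut makes $y_4$ and $y_7$ closer than $3$.
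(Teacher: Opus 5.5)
Your proof is correct: the edge set matches the figure, the only pair of vertices at distance $3$ is $\{y_4,y_7\}$, and the count $2+4=6<7$ finishes it. This is essentially the paper's distance-$2$ argument, which the paper localizes to a $5$-cycle: the cycle receives all five colors, and the two remaining vertices are within distance $2$ of all of it. That claim holds exactly for $y_1y_2y_3y_6y_5$, the $5$-cycle through the four $3$-vertices, with $y_4,y_7$ remaining, so neither can be colored.
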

\begin{proof}
On the contrary, suppose that $\mathcal{G}_9$ has a $(2,2,2,2,2)$-packing coloring. A cycle of order $5$ must be colored by the five colors $2$. The two remaining vertices are at a distance of at most $2$ from any vertex of the mentioned cycle, a contradiction.
\end{proof}\\

We proved that every $(3,0)$-saturated subcubic graph $G$ such that $g_3(G)=3$ is $(1,1,2,4)$-packing colorable. The result is sharp in the sense that there are such graphs that are not $(1,1,3,3)$-packing colorable (Proposition \ref{prop 8}). In addition, we did not find such a graph that is not $(1,1,2)$-packing colorable. So, we conjecture the following.

\begin{conjecture}
    Every $(3,0)$-saturated subcubic graph $G$ such that $g_3(G)=3$ is $(1,1,2)$-packing colorable.
\end{conjecture}

We proved that every $(3,1)$-saturated subcubic graph $G$ such that $g_3(G)=3$ is $(1,1,2,3)$-packing colorable. The result is sharp in the sense that there are such graphs that are not $(1,1,3,3)$-packing colorable (Proposition \ref{prop 8}).

We proved that every $(3,2)$-saturated subcubic graph $G$ such that $g_3(G)=3$ is $(1,1,3,3,3)$-packing colorable. The result is sharp in several sense: there are such graphs that are not $(1,1,3,3)$-packing colorable (Proposition \ref{prop 8}) and there is a $(3,3)$-saturated subcubic graph $G$ that is not $(1,1,3,3,3)$-packing colorable (Proposition \ref{prop 10}).

\begin{proposition}\label{prop 10}
    The graph $\mathcal{G}_{11}$ is not $(1,1,3,3,3)$-packing colorable.
\end{proposition}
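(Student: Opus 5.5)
The plan is a counting argument in the style of Proposition~\ref{prop 8}: show that $\mathcal{G}_{11}$ has diameter at most $3$, so that each of the three colors $3_a,3_b,3_c$ can be used at most once. On the other hand, $\mathcal{G}_{11}$ contains four pairwise disjoint triangles, and each of them needs a vertex colored $3$. Four triangles cannot be covered by at most three vertices.

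First I fix the structure, using the labels of Figure~\ref{figure 5}. The vertices $1,\dots,8$ form an outer $8$-cycle and $9,10,11,12$ are inner vertices. The edges $1\text{-}9$, $2\text{-}9$, $3\text{-}10$, $4\text{-}10$, $5\text{-}11$, $6\text{-}11$, $7\text{-}12$, $8\text{-}12$, $9\text{-}11$ and $10\text{-}12$ complete the graph. This gives the four vertex-disjoint triangles $\{1,2,9\}$, $\{3,4,10\}$, $\{5,6,11\}$ and $\{7,8,12\}$, and $\mathcal{G}_{11}$ is cubic. In any $(1,1,3,3,3)$-packing coloring, the color classes $1_a$ and $1_b$ are independent sets. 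A triangle cannot be covered by two independent sets, so each of the four triangles contains at least one vertex colored by some color $3_k$. Because the triangles are disjoint, at least four vertices receive a color $3$.

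Second, I verify $\operatorname{diam}(\mathcal{G}_{11})\le 3$. This is the only step needing care. The rotation $i\mapsto i+2$ on the outer cycle, together with $9\to10\to11\to12\to9$, is an automorphism, and the reflections are compatible with it. Hence it suffices to check eccentricities from the outer vertex $1$ and the inner vertex $9$.
\begin{itemize}
\item From $1$: $1\text{-}8\text{-}12$, $1\text{-}8\text{-}7$, $1\text{-}2\text{-}3$, $1\text{-}2\text{-}3\text{-}4$, $1\text{-}2\text{-}3\text{-}10$, $1\text{-}9\text{-}11\text{-}5$, $1\text{-}9\text{-}11\text{-}6$ and $1\text{-}8\text{-}7\text{-}6$. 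All distances are at most $3$.
\item From $9$: $9\text{-}11\text{-}5$, $9\text{-}11\text{-}6$, $9\text{-}2\text{-}3$, $9\text{-}1\text{-}8$, $9\text{-}2\text{-}3\text{-}4$, $9\text{-}2\text{-}3\text{-}10$, $9\text{-}1\text{-}8\text{-}7$ and $9\text{-}1\text{-}8\text{-}12$. Again all distances are at most $3$.
\end{itemize}
So any two vertices are at distance at most $3$.

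Finally, since two vertices with the same color $3_k$ must be at distance greater than $3$, each of $3_a,3_b,3_c$ is used at most once. Thus at most three vertices are colored $3$, contradicting the need for at least four. Hence $\mathcal{G}_{11}$ is not $(1,1,3,3,3)$-packing colorable. The main obstacle is only the diameter check. The symmetry reduction makes it a short finite verification, and if the symmetry claim seemed doubtful I would simply run BFS from every vertex.
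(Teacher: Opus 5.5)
Your proof is correct and follows the paper's argument. Diameter $3$ forces each of $3_a,3_b,3_c$ to be used at most once, while the four vertex-disjoint triangles each need a vertex of some color $3$. You also carry out the diameter check that the paper only asserts, and it is valid: the rotation and the reflection you describe are automorphisms, acting transitively on the outer vertices and on the inner vertices.
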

\begin{proof}
The diameter of $\mathcal{G}_{11}$ is $3$, then at most one vertex can be colored by $3_a$ (resp., $3_b, 3_c$). But there are four disjoint triangles that must contain a vertex of color $3$. Thus, $\mathcal{G}_{11}$ does not have a $(1,1,3,3,3)$-packing coloring.
\end{proof}\\

Although we found a $(3,3)$-saturated subcubic graph $G$ such that $g_3(G)=3$ that is not $(1,1,3,3,3)$-packing colorable, we think it may be the unique counter-example. Thus, we conjecture the following.

\begin{conjecture}
    Every claw-free subcubic graph, except $\mathcal{G}_{11}$, is $(1,1,3,3,3)$-packing colorable.
\end{conjecture}

We also conjecture the following.

\begin{conjecture}
    Every claw-free subcubic graph is $(1,1,2,3)$-packing colorable.
\end{conjecture}
\begin{conjecture}
    Every claw-free subcubic graph is $(2,2,2,2,2,2,2)$-packing colorable.
\end{conjecture}

\section*{Acknowledgments}
This work was partially supported by a grant from the IMU-CDC and Simons Foundation.

\end{document}